\documentclass[11pt,reqno]{amsart}

\usepackage{amsmath,amssymb,amsthm}
\usepackage{graphicx}
\usepackage{mathrsfs}
\usepackage[margin=1in]{geometry}
\usepackage{hyperref}
\hypersetup{colorlinks=true,linkcolor=blue,citecolor=blue,urlcolor=blue}

\theoremstyle{plain}
\newtheorem{theorem}{Theorem}
\newtheorem{proposition}{Proposition}
\newtheorem{lemma}{Lemma}
\newtheorem{corollary}{Corollary}
\theoremstyle{definition}
\newtheorem{definition}{Definition}
\newtheorem{example}{Example}
\theoremstyle{remark}
\newtheorem{remark}{Remark}
\newtheorem{note}{Note}

\newcommand{\R}{\mathbb{R}}
\newcommand{\Z}{\mathbb{Z}}
\newcommand{\C}{\mathbb{C}}
\newcommand{\N}{\mathbb{N}}
\newcommand{\HH}{\mathbb{H}}
\newcommand{\Hil}{\mathscr{H}}
\newcommand{\Fou}{\mathscr{F}}
\newcommand{\D}{\mathscr{D}}
\newcommand{\ip}[2]{\langle #1,#2\rangle}
\newif\ifhavebbm
\IfFileExists{bbm.sty}{\havebbmtrue}{\havebbmfalse}
\ifhavebbm\usepackage{bbm}\fi
\ifhavebbm
  \newcommand{\indf}[1]{\mathbbm{1}_{#1}}
\else
  \newcommand{\indf}[1]{\chi_{#1}}
\fi
\DeclareMathOperator{\Lis}{Li}
\renewcommand{\Re}{\operatorname{Re}}

\begin{document}

\title[Dyadic resolvent representations and spectral data]
{Dyadic resolvent representations of self-adjoint operators:\\
propagator expansions, spectral measures, and zeta functions}

\author{N. Castillo}
\address{Independent researcher, Columbus, OH, USA}
\email{nickandpreeti@gmail.com}

\date{\today}

\begin{abstract}
For a self-adjoint operator $A$ on a Hilbert space, the dyadic resolvent
representation of Castillo, Costin and Costin~\cite{CCC} expresses the resolvent
$R_A(i\lambda)=(A-i\lambda)^{-1}$ as a series in the unitary group
$U_t=e^{-itA}$ sampled at the dyadic times $\{2^{-k}\}_{k\ge0}$. We develop this
representation structurally and through several spectral applications. We first
exhibit the underlying operators as a one-parameter scale family obeying a Landen
doubling recursion whose telescoping recovers the representation, and record its
readings as a series of Zak transforms and as a dyadic filter bank. As a worked
instance of the Zak-transform reading we obtain dyadic--Bessel series for the
lattice Green functions of $\Z^d$, with closed-form special values---the
lemniscatic constant $\Gamma(\tfrac14)$ in two dimensions and Watson's integral in
three. For the
Laplacian $A=-\Delta$ on $\R^n$ we expand $R_A(i\lambda)\varphi$, for
$\varphi\in L^1(\R^n)\cap L^2(\R^n)$, as a series of convolutions with the free
Schr\"odinger propagator, and derive explicit dyadic representations of the
fundamental solutions of the Laplace and Poisson equations in $\R^3$ and of the
one-dimensional heat equation. Finally, we reconstruct spectral data of $A$ from
the dyadic representation: the spectral measures on $\sigma(A)$---including,
through the limiting absorption principle, the absolutely continuous spectral
density of $-\Delta+V$---together with the density of states and the spectral
zeta function, the last reducing to the Riemann zeta function for $-\Delta$ on
the circle and yielding the functional determinant of $-\Delta+m^2$ there. We close by showing that the dyadic samples determine $A$
uniquely---an exact anti-aliasing of the propagator---so that all of this spectral
data is a function of the dyadic samples alone.
\end{abstract}

\maketitle

\section{Introduction}\label{sec:intro}

We use the dyadic resolvent representation of Castillo, Costin and
Costin~\cite{CCC}---which expresses the resolvent of a (possibly unbounded)
self-adjoint operator as a series in the unitary group $U_t=e^{-itA}$ sampled at
the dyadic times $\{2^{-k}\}_{k\ge0}$---to study resolvents, propagators, and
spectral data of self-adjoint operators. Parts of the development below were carried out in the author's doctoral dissertation~\cite{CastilloThesis}. In \S\ref{sec:zak} we read the resolvent
matrix coefficients $\ip{R_A(i\lambda)\varphi}{\psi}$ as a series of Zak
transforms (Definition~\ref{def:zak}) of the matrix coefficients of the unitary
group at dyadic sampling times (Proposition~\ref{prop:zakcoeff}); as a worked
instance we derive dyadic--Bessel series for the lattice Green functions of $\Z^d$
and their closed-form special values, including the lemniscatic constant
$\Gamma(\tfrac14)$ in two dimensions and Watson's integral in three
(Proposition~\ref{prop:lattice}, Corollary~\ref{cor:watson}). Letting these
sampling parameters vary, we isolate in \S\ref{sec:scale} a one-parameter family
of propagator averages obeying a Landen-type doubling recursion whose telescoping
returns the dyadic identity \eqref{eq:dyadic}
(Theorem~\ref{thm:telescope}, an independent derivation of
Proposition~\ref{prop:dyadic}) and which realizes the resolvent as
a dyadic filter bank (Remark~\ref{rem:filterbank}). In \S\ref{sec:laplacian} we compute the resolvent of the
Laplacian on $\R^n$ and show (Proposition~\ref{prop:laplacian}) that its action on
$\varphi\in L^1(\R^n)\cap L^2(\R^n)$ is a series of convolutions with the free
propagator $P_0(x;t)$, and in \S\ref{sec:fundamental} we apply the scalar form of
the dyadic identity to obtain explicit dyadic representations of the fundamental
solutions of the Laplace and Poisson equations in $\R^3$ and of the one-dimensional heat
equation. Finally, \S\ref{sec:spectral} reconstructs spectral data of $A$ from the
dyadic representation: the spectral measures on $\sigma(A)$---including, through
the limiting absorption principle, the absolutely continuous spectral density of
$-\Delta+V$---together with the density of states and the spectral zeta function,
the latter split by \eqref{eq:zeta} into an entire part built from integer-time
heat samples and a dyadic correction carrying all the poles
(Proposition~\ref{prop:zeta}), and
recovering the Riemann zeta function for $-\Delta$ on the circle and
the functional determinant of $-\Delta+m^{2}$ there
(Example~\ref{ex:zetadet}). We
then record that the dyadic samples determine $A$ uniquely---an exact
anti-aliasing of the propagator (Corollary~\ref{cor:identifiability}).

Throughout, the dyadic identity is used as a machine that converts propagator
samples into spectral data. The closed forms reached along the way---the lattice
Green functions and their $\Gamma(\tfrac14)$ and Watson values, the Faddeeva
function, the functional determinant on the circle---are classical, and each is
reached faster by classical means; they serve here as a calibration set against
which the reconstruction is tested, not as new evaluations. It
has been pointed out by O.~Costin that some of these results may prove useful in
understanding Feynman path integrals.

We first recall the result of \cite{CCC} on which everything below rests.

\begin{proposition}[{\cite[Proposition~22(i)]{CCC}}]\label{prop:dyadic}
Let $\Hil$ be a Hilbert space and $A$ a bounded or unbounded self-adjoint
operator. Let $U_t$ be the unitary evolution operator generated by $A$,
$U_t=e^{-itA}$. If $\lambda\in\R^+$, then
\begin{equation}\label{eq:dyadic}
\begin{aligned}
(A-i\lambda)^{-1}
&= i\,(1-e^{-\lambda}U_1)^{-1}
   - i\sum_{k=1}^{\infty}\frac{1}{2^{k}}\bigl(1+e^{-\lambda/2^{k}}U_{2^{-k}}\bigr)^{-1}\\
&= i\sum_{j=0}^{\infty}e^{-j\lambda}U_{j}
   - i\lim_{\ell\to\infty}\sum_{k=1}^{\ell}\sum_{j=0}^{\infty}
     (-1)^{j}\,2^{-k}\,e^{-j\lambda/2^{k}}\,U_{j2^{-k}}.
\end{aligned}
\end{equation}
Convergence holds in the strong operator topology. For $\lambda<0$ one simply
complex-conjugates~\eqref{eq:dyadic}. (The limits cannot, in general, be
interchanged.)
\end{proposition}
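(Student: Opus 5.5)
The plan is to reduce \eqref{eq:dyadic} to a scalar identity through the functional calculus for $A$, verify that identity by a telescoping argument, and then pass to the strong limit by dominated convergence in the spectral measures.

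\emph{Scalar identity.} Fix $\lambda>0$ and $a\in\R$, and set $s=\lambda+ia$, so that $\Re s=\lambda>0$. Then $a-i\lambda=-is$, hence $(a-i\lambda)^{-1}=i/s$, while $e^{-\lambda/2^k}e^{-ia/2^k}=e^{-s/2^k}$. The first line of \eqref{eq:dyadic}, evaluated at the spectral parameter $a$, therefore reads
\[
\frac1s=\frac{1}{1-e^{-s}}-\sum_{k\ge1}\frac{2^{-k}}{1+e^{-s/2^k}} .
\]
To prove this, start from the partial fraction identity $\frac{1}{1-u^2}=\frac12\bigl(\frac1{1-u}+\frac1{1+u}\bigr)$. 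Rearranged, it gives $\frac{1}{1+u}=\frac{2}{1-u^2}-\frac1{1-u}$. Take $u=e^{-s/2^k}$ and multiply by $2^{-k}$:
\[
\frac{2^{-k}}{1+e^{-s/2^k}}=g_{k-1}(s)-g_k(s),\qquad g_k(s):=\frac{2^{-k}}{1-e^{-s/2^k}} .
\]
Summing over $k=1,\dots,\ell$ telescopes to $g_0(s)-g_\ell(s)$. Since $g_\ell(s)\to 1/s$ as $\ell\to\infty$, the scalar identity follows.

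\emph{Lifting to operators.} Define $f_\ell(a)=i\bigl(g_0(s)-\sum_{k=1}^{\ell}(g_{k-1}(s)-g_k(s))\bigr)=i\,g_\ell(s)$, with $s=\lambda+ia$. The functional calculus applied to $f_\ell$ gives exactly the $\ell$-th partial sum of the first line of \eqref{eq:dyadic}; the inverses exist as bounded operators because $\|e^{-\lambda/2^k}U_{2^{-k}}\|<1$. The key estimate is a bound on $f_\ell$ that is uniform in $a$ and $\ell$. Writing $x=s/2^\ell$, we have $|1-e^{-x}|\ge 1-e^{-\Re x}$, and therefore
\[
|f_\ell(a)|\le \frac{2^{-\ell}}{1-e^{-\lambda 2^{-\ell}}}\le C(\lambda)\quad\text{for all }a,\ell,
\]
because $h/(1-e^{-\lambda h})$ is bounded for $h\in(0,1]$. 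Given this bound, for every $\varphi\in\Hil$ we get
\[
\|f_\ell(A)\varphi-(A-i\lambda)^{-1}\varphi\|^2=\int|f_\ell(a)-i/s|^2\,d\mu_\varphi(a)\to0
\]
by dominated convergence. This proves strong convergence in the first line. This uniform bound is the only real obstacle. It also explains why convergence is only strong and not in norm: $g_\ell(s)-1/s$ does not tend to $0$ uniformly in $a$.

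\emph{Second line and $\lambda<0$.} Since $\|e^{-\lambda}U_1\|<1$ and $\|e^{-\lambda/2^k}U_{2^{-k}}\|<1$, each inverse in the first line expands as a norm-convergent Neumann series. Using $U_tU_s=U_{t+s}$, these series are $\sum_j e^{-j\lambda}U_j$ and $\sum_j(-1)^je^{-j\lambda/2^k}U_{j2^{-k}}$. The sum over $k$ is finite for each fixed $\ell$, so interchanging it with the $j$-sum is harmless; only the outer limit in $\ell$ is strong. For $\lambda<0$, apply the result to $-\lambda$ and take adjoints, using $U_t^*=U_{-t}$; this amounts to complex conjugating \eqref{eq:dyadic}.
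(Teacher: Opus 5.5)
Your proposal is correct and is essentially the paper's own self-contained derivation in Theorem~\ref{thm:telescope}. Your partial-fraction step is the averaging form $f_{2T}^{+}=\tfrac12(f_T^{+}+f_T^{-})$ of the Landen recursion \eqref{eq:landen}, telescoped from $T_0=1$, with remainder $i\,g_\ell(\lambda+iA)=i\,2^{-\ell}f^{+}_{2^{-\ell}}$; your uniform bound followed by dominated convergence against $\mu_\varphi$ is the short-scale limit of Lemma~\ref{lem:scalelimits}, and the passage to the second line by Neumann series matches the paper's.
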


The two tools behind \eqref{eq:dyadic} are the dyadic expansion of the Cauchy
kernel~\cite{CCC} and the spectral theorem for self-adjoint operators
\cite{SimonOT}; we refer
the reader to \cite{CCC} for the proof. The second equality follows from the
first by expanding each resolvent in its (operator-norm convergent) geometric
series, using $\|e^{-\lambda}U_1\|=e^{-\lambda}<1$, $\|e^{-\lambda/2^{k}}U_{2^{-k}}\|
=e^{-\lambda/2^{k}}<1$ and the group law $U_t^{\,j}=U_{jt}$.

\begin{remark}\label{rem:average}
For fixed $\lambda>0$, \eqref{eq:dyadic} constructs $R_A(i\lambda)$ as a weighted
average of resolvents of samples $U_{2^{-k}}$ of the unitary propagator
(see~\eqref{eq:weighted}). These samples occur at shorter and shorter times
$2^{-k}$ following $t=0$: in a precise sense the behavior of $R_A(i\lambda)$ is
controlled by the behavior of the system shortly after the clock is started,
all information being carried by the dyadic time set $\{2^{-k}\}_{k\ge0}$.
\end{remark}

\section{Matrix coefficients of \texorpdfstring{$R_A(i\lambda)$}{R\_A(i lambda)} and the Zak transform}
\label{sec:zak}

Pairing the geometric-series form of \eqref{eq:dyadic} against vectors
$\varphi,\psi\in\Hil$ and using the continuity of the inner product together
with strong-operator convergence gives
\begin{equation}\label{eq:paired}
\begin{aligned}
\ip{R_A(i\lambda)\varphi}{\psi}
&= i\sum_{j=0}^{\infty}e^{-j\lambda}\ip{U_j\varphi}{\psi}\\
&\quad{}-i\lim_{\ell\to\infty}\sum_{k=1}^{\ell}\sum_{j=0}^{\infty}
   (-1)^{j}\,2^{-k}\,e^{-j\lambda/2^{k}}\,\ip{U_{j2^{-k}}\varphi}{\psi}.
\end{aligned}
\end{equation}
Throughout, $\ip{\cdot}{\cdot}$ is linear in the first argument and antilinear in
the second. We now recognize the terms on the right of \eqref{eq:paired} as Zak
transforms (at varying sampling times) of the matrix coefficients
$t\mapsto\ip{U_t\varphi}{\psi}$.

\begin{definition}[Zak transform; cf.\ \cite{Grochenig}]\label{def:zak}
Let $f\in C_c(\R)$, $T>0$, and let $t,w$ be real variables. The function
\begin{equation}\label{eq:zakdef}
(Z_T f)(t,w):=\sqrt{T}\sum_{n\in\Z}f(t+nT)\,e^{-2\pi i n w},
\qquad t,w\in\R,
\end{equation}
is called the \emph{Zak transform} of $f$ with sampling time $T$.
\end{definition}

As the second variable $w$ varies, the expansion basis
$\{e^{-2\pi i n w}\}_{n\in\Z}$ changes; in this sense $Z_T$ produces a whole
family of expansions. Formula \eqref{eq:zakdef} is defined pointwise for
$f\in C_c(\R)$ and extends to $L^2(\R)$:

\begin{lemma}[\cite{Grochenig}]\label{lem:zakunitary}
For $T=1$ the Zak transform $Z_1$ of \eqref{eq:zakdef} is a unitary map of
$L^2(\R)$ onto $L^2([0,1]^2)$. For general $T>0$, $Z_T$ satisfies the
quasi-periodicity relations
\[
(Z_Tf)(t+T,w)=e^{2\pi i w}(Z_Tf)(t,w),\qquad (Z_Tf)(t,w+1)=(Z_Tf)(t,w),
\]
and $\|Z_Tf\|_{L^2([0,T]\times[0,1])}=\sqrt{T}\,\|f\|_{L^2(\R)}$; equivalently
$T^{-1/2}Z_T$ is unitary from $L^2(\R)$ onto $L^2([0,T]\times[0,1])$.
\end{lemma}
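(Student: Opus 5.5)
Quasi-periodicity is a reindexing of the defining sum; the isometry is a periodization argument followed by Parseval on each fibre, extended by density. Unitarity (surjectivity) comes from an explicit inverse.

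\emph{Quasi-periodicity.} For $f\in C_c(\R)$ the sum in \eqref{eq:zakdef} is finite for each $t$. Shifting $t\mapsto t+T$ and substituting $m=n+1$ gives
\[
(Z_Tf)(t+T,w)=\sqrt{T}\sum_{m}f(t+mT)e^{-2\pi i(m-1)w}=e^{2\pi i w}(Z_Tf)(t,w).
\]
Periodicity in $w$ holds because each $e^{-2\pi i n w}$ has period $1$.

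\emph{Isometry.} Fix $t\in[0,T]$. The function $w\mapsto (Z_Tf)(t,w)$ is a trigonometric polynomial with coefficients $\sqrt{T}f(t+nT)$. Parseval on $[0,1]$ gives
\[
\int_0^1|Z_Tf(t,w)|^2\,dw=T\sum_n|f(t+nT)|^2.
\]
Integrate over $t\in[0,T]$ and apply Tonelli. Since the intervals $[nT,(n+1)T]$ tile $\R$, the result is $T\|f\|_{L^2(\R)}^2$. This is the norm identity $\|Z_Tf\|_{L^2([0,T]\times[0,1])}=\sqrt{T}\,\|f\|_{L^2(\R)}$ on $C_c(\R)$. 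Because $C_c(\R)$ is dense in $L^2(\R)$, $T^{-1/2}Z_T$ extends uniquely to an isometry on all of $L^2(\R)$. The quasi-periodicity relations pass to the extension almost everywhere, by taking $L^2_{\mathrm{loc}}$ limits.

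\emph{Surjectivity.} I expect this to be the main obstacle, since it needs an explicit inverse rather than a norm computation. Take $F\in L^2([0,T]\times[0,1])$ and define $f$ on $[nT,(n+1)T)$ by
\[
f(s+nT)=T^{-1/2}\int_0^1F(s,w)e^{2\pi i n w}\,dw,\qquad s\in[0,T).
\]
For almost every $s$, these are the Fourier coefficients of $F(s,\cdot)\in L^2[0,1]$. Parseval and Fubini then give $f\in L^2(\R)$ with $\|f\|=T^{-1/2}\|F\|$. Fourier inversion in $w$ shows $T^{-1/2}Z_Tf=T^{-1}F$, up to the chosen normalization, on the fundamental domain.

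To finish, I rescale $F$ and use that the range of an isometry is closed and contains this dense image. So $T^{-1/2}Z_T$ is onto $L^2([0,T]\times[0,1])$. The case $T=1$ is the unitarity of $Z_1$ onto $L^2([0,1]^2)$. The only real bookkeeping is tracking the normalization constants and the almost-everywhere issues in the fibrewise Fourier expansion.
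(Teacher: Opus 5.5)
The paper gives no proof of this lemma; it is quoted from Gr\"ochenig. The usual argument there checks that $T^{-1/2}Z_T$ carries the orthonormal basis $e_{k,l}(x)=T^{-1/2}\chi_{[0,T)}(x-kT)\,e^{2\pi i lx/T}$ ($k,l\in\Z$) of $L^2(\R)$ onto $T^{-1/2}e^{2\pi i lt/T}e^{-2\pi i kw}$. These images form the exponential orthonormal basis of $L^2([0,T]\times[0,1])$, which gives isometry and surjectivity at once, with no inverse formula.

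Your route is correct: fibrewise Parseval for the norm identity on $C_c(\R)$, extension by density, then an explicit fibrewise inverse plus closed range. It is longer, but it yields more. It identifies $Z_Tf$, for every $f\in L^2(\R)$, with the $L^2$-convergent series $\sqrt T\sum_n f(t+nT)e^{-2\pi i nw}$. That series is orthogonal in $w$, and your Parseval step controls its fibre norms exactly. This identification is what justifies the paper's later use of pointwise values for inputs such as $G^{\lambda}_{\varphi,\psi}$, which are not in $C_c(\R)$. The basis functions $e_{k,l}$ are not in $C_c(\R)$ either, so the basis argument tacitly needs the same identification, at least for compactly supported $L^2$ functions.

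Two small repairs are needed.

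\emph{Normalization.} With your definition $f(s+nT)=T^{-1/2}\int_0^1F(s,w)e^{2\pi i nw}\,dw$ one gets $Z_Tf=F$, i.e.\ $T^{-1/2}Z_Tf=T^{-1/2}F$, not $T^{-1}F$. The value $T^{-1}F$ would contradict $\|T^{-1/2}Z_Tf\|=\|f\|=T^{-1/2}\|F\|$ whenever $T\neq1$. The slip is harmless, since the range is a linear subspace.

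\emph{The a.e.\ bookkeeping.} The issue you flag disappears if you run the inversion only on $F=\sum_{|m|\le N}c_m(s)e^{2\pi i mw}$ with $c_m\in C_c((0,T))$. For such $F$:
\begin{itemize}
\item the constructed $f$ lies in $C_c(\R)$;
\item the defining sum is finite for each $t$, so $Z_Tf=F$ holds pointwise;
\item such $F$ are dense in $L^2([0,T]\times[0,1])$.
\end{itemize}
Closedness of the range of an isometry then finishes the argument, which is presumably what your final sentence intends.
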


There are many properties of $Z_T$ reminiscent of the discrete Fourier
transform, with applications in signal processing, quantum field theory and
algebraic geometry; cf.\ \cite{Janssen}, \cite{FollandHAPS}. Like the Fourier
transform $\Fou$, the Zak transform acts on Gaussians in a special way; however,
rather than Gaussians being eigenfunctions, $Z_T$ multiplies the Gaussian by a
Jacobi theta function.

\begin{example}[Zak transform of a Gaussian; cf.\ \cite{Janssen}]\label{ex:gaussian}
Let $\gamma>0$ and $f(t)=(2\gamma)^{1/4}e^{-\pi\gamma t^{2}}$. A routine
calculation gives
\begin{equation}\label{eq:gaussian}
(Z_1 f)(t,w)=(2\gamma)^{1/4}e^{-\pi\gamma t^{2}}\,
\theta_3\!\bigl(w-i\gamma t,\,e^{-\pi\gamma}\bigr)
= f(t)\,\theta_3\!\bigl(w-i\gamma t,\,e^{-\pi\gamma}\bigr),
\end{equation}
where $\theta_3$ is the third Jacobi theta function,
\[
\theta_3(z;q)=\sum_{k=-\infty}^{\infty}q^{k^{2}}e^{-2\pi i k z},
\qquad z\in\C,\ q=e^{\pi i\tau},\ \tau\in\HH .
\]
\end{example}

Several algebraic relations for $Z_T$ become apparent when its inputs are varied
separately, making $Z_T$ a natural object in the study of the Heisenberg group
\cite{AlAmeerKisil} and Gabor systems \cite{Grochenig,FeichtingerGrochenig}.
Returning to
\eqref{eq:paired} we have the following.

\begin{proposition}\label{prop:zakcoeff}
Let $\Hil$ be a Hilbert space and $A$ a bounded or unbounded self-adjoint
operator, $U_t=e^{-itA}$. Given $\varphi,\psi\in\Hil$ and $\lambda\in\R^{+}$,
define the $\C$-valued function $G^{\lambda}_{\varphi,\psi}$ on $\R$ by
\begin{equation}\label{eq:Gdef}
G^{\lambda}_{\varphi,\psi}(t)=i\,\ip{U_t\varphi}{\psi}\,e^{-\lambda t}\,e^{2\pi i t}\,
\indf{[0,\infty)}(t).
\end{equation}
The superscript records the dependence on $\lambda$ through the factor
$e^{-\lambda t}$. Then the matrix coefficient of $R_A(i\lambda)$ with respect to $\varphi,\psi$
satisfies
\begin{equation}\label{eq:zakcoeff}
\ip{R_A(i\lambda)\varphi}{\psi}
=(Z_1 G^{\lambda}_{\varphi,\psi})(0,1)
-\lim_{\ell\to\infty}\sum_{k=1}^{\ell}2^{-k/2}
   (Z_{2^{-k}}G^{\lambda}_{\varphi,\psi})\!\left(0,\,2^{-k}(1+2^{k-1})\right).
\end{equation}
The series converges in the sense of Proposition~\ref{prop:dyadic}; when $A$ is
bounded it converges absolutely.
\end{proposition}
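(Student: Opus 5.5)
The plan is a direct term-by-term identification of the two pieces of \eqref{eq:paired} with the Zak transforms appearing in \eqref{eq:zakcoeff}, followed by a separate argument for absolute convergence when $A$ is bounded.

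\emph{Step 0 (making sense of the Zak transforms).} Definition~\ref{def:zak} is stated for $f\in C_c(\R)$, and $G^{\lambda}_{\varphi,\psi}$ is not compactly supported. I will note first that \eqref{eq:zakdef} still makes sense pointwise for $f=G^{\lambda}_{\varphi,\psi}$. Since $U_t$ is unitary,
\[
|G^{\lambda}_{\varphi,\psi}(nT)|\le\|\varphi\|\,\|\psi\|\,e^{-\lambda nT}\indf{[0,\infty)}(nT),
\]
so for every $T>0$, every real $w$ and $t=0$ the defining series converges absolutely. The indicator vanishes for $n<0$ and equals $1$ at $n=0$, so only the terms $n\ge0$ survive.

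\emph{Step 1 (first term).} With $T=1$, $t=0$, $w=1$, the factor $e^{-2\pi i n}$ equals $1$ and $e^{2\pi i n}=1$. Hence
\[
(Z_1G^{\lambda}_{\varphi,\psi})(0,1)=i\sum_{n\ge0}e^{-n\lambda}\ip{U_n\varphi}{\psi},
\]
which is exactly the first sum in \eqref{eq:paired}.

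\emph{Step 2 (the $k$-th term).} Take $T=2^{-k}$ and $w=2^{-k}(1+2^{k-1})=2^{-k}+\tfrac12$. The modulation built into $G^{\lambda}_{\varphi,\psi}$ contributes $e^{2\pi i n2^{-k}}$, and the Zak kernel contributes
\[
e^{-2\pi i n w}=e^{-2\pi i n2^{-k}}(-1)^n .
\]
The first factors cancel, leaving $(-1)^n$. The prefactor $\sqrt T=2^{-k/2}$ combines with the external $2^{-k/2}$ to give $2^{-k}$. Therefore
\[
2^{-k/2}(Z_{2^{-k}}G^{\lambda}_{\varphi,\psi})(0,w)=i\sum_{j\ge0}(-1)^j2^{-k}e^{-j\lambda/2^k}\ip{U_{j2^{-k}}\varphi}{\psi},
\]
which is the $k$-th inner sum in \eqref{eq:paired}. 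Substituting Steps 1 and 2 into \eqref{eq:paired}, itself obtained from Proposition~\ref{prop:dyadic} by pairing and strong convergence, gives \eqref{eq:zakcoeff} with the same $\lim_{\ell\to\infty}$ and hence the same mode of convergence.

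\emph{Step 3 (absolute convergence for bounded $A$).} This is the only non-mechanical point. I would use the first (resolvent) form of \eqref{eq:dyadic}: by Step 2, the $k$-th term equals $i2^{-k}\ip{(1+e^{-\lambda 2^{-k}}U_{2^{-k}})^{-1}\varphi}{\psi}$. If $\|A\|<\infty$, then
\[
\|e^{-\lambda2^{-k}}U_{2^{-k}}-1\|\le \lambda2^{-k}+2^{-k}\|A\|\to0 ,
\]
so $1+e^{-\lambda2^{-k}}U_{2^{-k}}$ is within distance $o(1)$ of $2$. For large $k$ its inverse therefore has norm at most $1$, say. The terms are then bounded by $2^{-k}\|\varphi\|\|\psi\|$, and the $k$-series converges absolutely. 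The inner $j$-series converge absolutely in all cases by Step 0.

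The main care point is making sure the indicator and modulation conventions (value at $t=0$, and the sign of the $e^{2\pi i t}$ factor against $e^{-2\pi i n w}$) line up exactly to produce $(-1)^j$. The rest is bookkeeping.
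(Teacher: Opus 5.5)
Your proposal is correct and follows the paper's proof essentially step for step. Both arguments get pointwise absolute convergence of the Zak series from the bound $|G^{\lambda}_{\varphi,\psi}(t)|\le\|\varphi\|\|\psi\|e^{-\lambda t}$, and both use the phase identity $e^{2\pi i n(2^{-k}-w_k)}=(-1)^n$ together with $\sqrt{T}\cdot2^{-k/2}=2^{-k}$, followed by substitution into \eqref{eq:paired}. For bounded $A$ you bound $\|(1+e^{-\lambda2^{-k}}U_{2^{-k}})^{-1}\|$ by a Neumann-series perturbation of $2I$, whereas the paper takes the spectral-theorem supremum over $[-M,M]$; these give the same $O(2^{-k})$ estimate on the $k$-th term.
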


\begin{remark}\label{rem:variants}
One could just as well use $K^{\lambda}_{\varphi,\psi}(t)=i\ip{U_{|t|}\varphi}{\psi}
e^{-\lambda|t|}e^{2\pi it}$ or $F^{\lambda}_{\varphi,\psi}(t)=i\ip{U_t\varphi}{\psi}
e^{-\lambda|t|}e^{2\pi it}$ in place of $G^{\lambda}_{\varphi,\psi}$; each is continuous
from the outset. With $F^{\lambda}_{\varphi,\psi}$ there would be a part of the expansion
in which $\psi$ is acted on by $U_t$, so the resulting formula would differ
slightly from \eqref{eq:zakcoeff}.
\end{remark}

\begin{proof}
By the Cauchy--Schwarz inequality and unitarity of $U_t$,
\begin{equation}\label{eq:Gbound}
|G^{\lambda}_{\varphi,\psi}(t)|
=\bigl|i\ip{U_t\varphi}{\psi}e^{-\lambda t}e^{2\pi it}\bigr|\indf{[0,\infty)}(t)
\le \|U_t\|_{\mathrm{op}}\|\varphi\|\,\|\psi\|\,e^{-\lambda t}\indf{[0,\infty)}(t)
= C_{\varphi,\psi}\,e^{-\lambda t}\indf{[0,\infty)}(t),
\end{equation}
with $C_{\varphi,\psi}:=\|\varphi\|\,\|\psi\|$ since $\|U_t\|_{\mathrm{op}}=1$.
Hence $G^{\lambda}_{\varphi,\psi}\in L^p(\R)$ for every $p\in[1,\infty]$, and
\[
\|G^{\lambda}_{\varphi,\psi}\|_{L^2(\R)}
\le C_{\varphi,\psi}\Bigl(\int_0^\infty e^{-2\lambda t}\,dt\Bigr)^{1/2}
=\frac{C_{\varphi,\psi}}{\sqrt{2\lambda}} .
\]
In particular, by Lemma~\ref{lem:zakunitary}, the Zak transforms $Z_1G^{\lambda}_{\varphi,\psi}$
and $Z_{2^{-k}}G^{\lambda}_{\varphi,\psi}$ are well-defined $L^2$ functions on their
respective fundamental domains.

Because $G^{\lambda}_{\varphi,\psi}$ decays exponentially, for every $T>0$ and every
$(t,w)$ the defining series of $(Z_T G^{\lambda}_{\varphi,\psi})(t,w)$ converges
absolutely; the resulting values are therefore genuine pointwise values (not
merely defined a.e.) and may be computed term by term. Using
$G^{\lambda}_{\varphi,\psi}(\tau)=i\ip{U_\tau\varphi}{\psi}e^{-\lambda\tau}e^{2\pi i\tau}$
for $\tau\ge0$ and $G^{\lambda}_{\varphi,\psi}(\tau)=0$ for $\tau<0$, and noting
$e^{2\pi i n}=1$,
\begin{equation}\label{eq:firstterm}
(Z_1 G^{\lambda}_{\varphi,\psi})(0,1)
=\sum_{n\in\Z}G^{\lambda}_{\varphi,\psi}(n)
=i\sum_{j=0}^{\infty}e^{-j\lambda}\ip{U_j\varphi}{\psi},
\end{equation}
which is precisely the first sum in \eqref{eq:paired}. Similarly, with
$T=2^{-k}$ and $w_k=2^{-k}(1+2^{k-1})=2^{-k}+\tfrac12$, the phase factor is
$e^{2\pi i n(2^{-k}-w_k)}=e^{-\pi i n}=(-1)^n$, whence
\begin{equation}\label{eq:kterm}
2^{-k/2}(Z_{2^{-k}}G^{\lambda}_{\varphi,\psi})(0,w_k)
=i\,2^{-k}\sum_{j=0}^{\infty}(-1)^{j}e^{-j\lambda/2^{k}}\ip{U_{j2^{-k}}\varphi}{\psi}.
\end{equation}
Comparing \eqref{eq:kterm} with the dyadic part of \eqref{eq:paired}, the right
side of \eqref{eq:kterm} equals the \emph{negative} of the $k$-th summand there.
Combining \eqref{eq:firstterm}, \eqref{eq:kterm} and \eqref{eq:paired} yields
\eqref{eq:zakcoeff}, with convergence inherited from
Proposition~\ref{prop:dyadic}.

Finally, if $A$ is bounded, say $\sigma(A)\subset[-M,M]$, then by the spectral
theorem the $k$-th summand of \eqref{eq:zakcoeff} has modulus
\[
2^{-k}\sup_{|x|\le M}\frac{1}{\bigl|1+e^{-(\lambda+ix)2^{-k}}\bigr|}\,
\|\varphi\|\,\|\psi\|
\;\sim\;2^{-k}\,\frac{\|\varphi\|\,\|\psi\|}{2}\qquad(k\to\infty),
\]
since $e^{-(\lambda+ix)2^{-k}}\to1$ uniformly on $|x|\le M$; the series therefore
converges absolutely. (For unbounded $A$ the denominator above is not bounded
below uniformly in $x$, and only strong-operator convergence is asserted; see
Remark~\ref{rem:absconv}.)
\end{proof}

\begin{remark}\label{rem:absconv}
A direct norm estimate does not yield absolute convergence of \eqref{eq:zakcoeff}
for unbounded $A$, for two reasons. First, the Zak isometry holds on the
fundamental domain $[0,T]\times[0,1]$, not on $[0,1]^2$, and carries the factor
$\sqrt{T}$ (Lemma~\ref{lem:zakunitary}); with $T=2^{-k}$ this rescales the
would-be constant. Second, and more essentially, a triangle-inequality bound
discards the alternating cancellation in the inner sum of \eqref{eq:kterm}, which
is exactly what makes the $k$-series summable. Convergence is therefore
established here by inheriting it from Proposition~\ref{prop:dyadic} rather than
by a direct norm estimate, and absolute convergence is claimed only in the
bounded case.
\end{remark}

The mechanism of Proposition~\ref{prop:zakcoeff} becomes fully explicit when the
spectral data are Gaussian, in which case Example~\ref{ex:gaussian} applies almost
verbatim and the resolvent coefficients close up in theta functions.

\begin{example}[A Gaussian spectral density: resolvent coefficients as partial
theta functions]\label{ex:gausspartialtheta}
Take $\Hil=L^2(\R)$ and let $A$ be multiplication by $x$, a self-adjoint operator
with $\sigma(A)=\R$; then $U_t=e^{-itA}$ is multiplication by $e^{-itx}$ and
$R_A(i\lambda)$ is multiplication by $(x-i\lambda)^{-1}$. Fix $\gamma>0$ and choose
$\varphi=\psi=e^{-\pi\gamma x^{2}/2}$, so that the spectral density
$\varphi\bar\psi=e^{-\pi\gamma x^{2}}$ is Gaussian. The group coefficient is then
itself Gaussian in $t$,
\begin{equation}\label{eq:gausscoeff}
\ip{U_t\varphi}{\psi}=\int_\R e^{-itx}e^{-\pi\gamma x^{2}}\,dx
=\gamma^{-1/2}e^{-t^{2}/(4\pi\gamma)} .
\end{equation}
This is the general mechanism at work: $\ip{U_t\varphi}{\psi}$ is the Fourier
transform of the spectral measure $\mu_{\varphi,\psi}$, hence Gaussian exactly
when $\mu_{\varphi,\psi}$ is.

Substituting \eqref{eq:gausscoeff} into \eqref{eq:Gdef} and completing the square,
$-t^{2}/(4\pi\gamma)-\lambda t=-(t-t_0)^{2}/(4\pi\gamma)+\pi\gamma\lambda^{2}$ with
$t_0=-2\pi\gamma\lambda$, exhibits $G^{\lambda}_{\varphi,\psi}$ as a shifted, modulated
Gaussian restricted to the half-line,
\begin{equation}\label{eq:Galmostgauss}
G^{\lambda}_{\varphi,\psi}(t)
=i\gamma^{-1/2}e^{\pi\gamma\lambda^{2}}\,
e^{-(t-t_0)^{2}/(4\pi\gamma)}\,e^{2\pi it}\,\indf{[0,\infty)}(t).
\end{equation}
But for the cutoff $\indf{[0,\infty)}$ this is exactly the input of
Example~\ref{ex:gaussian}, whose Zak transform is a Jacobi $\theta_3$; the
translation $t\mapsto t-t_0$ and the modulation $e^{2\pi it}$ only relabel its
arguments. The cutoff restricts the lattice sums in \eqref{eq:zakcoeff}---all
evaluated at $t=0$---to $n\ge0$, replacing the bilateral theta by the one-sided
\emph{partial theta function} $\theta_{+}(q,z):=\sum_{n\ge0}q^{n^{2}}z^{n}$.
Indeed, with $q_T=e^{-T^{2}/(4\pi\gamma)}$,
\[
(Z_T G^{\lambda}_{\varphi,\psi})(0,w)
=i\sqrt{T/\gamma}\;\theta_{+}\!\bigl(q_T,\,e^{-\lambda T}e^{2\pi i(T-w)}\bigr),
\]
and at the sampling points of \eqref{eq:zakcoeff} the second argument is
$e^{-\lambda}$ for the leading term and $-e^{-\lambda2^{-k}}$ for the $k$-th, so
\begin{equation}\label{eq:gausspartialtheta}
\ip{R_A(i\lambda)\varphi}{\psi}
=i\gamma^{-1/2}\Bigl[\theta_{+}\!\bigl(q_1,e^{-\lambda}\bigr)
-\lim_{\ell\to\infty}\sum_{k=1}^{\ell}2^{-k}\,
\theta_{+}\!\bigl(q_{2^{-k}},-e^{-\lambda2^{-k}}\bigr)\Bigr].
\end{equation}
The left side is the Cauchy transform of a Gaussian, a Faddeeva value:
$\ip{R_A(i\lambda)\varphi}{\psi}
=i\pi\,e^{\pi\gamma\lambda^{2}}\operatorname{erfc}\!\bigl(\lambda\sqrt{\pi\gamma}\bigr)$.
Thus \eqref{eq:gausspartialtheta} is a dyadic partial-theta representation of the
complementary error function; for $\gamma=\lambda=1$ both sides equal
$i\pi e^{\pi}\operatorname{erfc}(\sqrt\pi)=0.8861\ldots\,i$, which we have checked
numerically. Recovering the full Jacobi $\theta_3$ of Example~\ref{ex:gaussian}
would require a two-sided $G^{\lambda}_{\varphi,\psi}$; the causality of the dyadic sampling
$t=j2^{-k}\ge0$ is exactly what makes the representation one-sided.
\end{example}

\begin{remark}[Partial versus full theta]\label{rem:thetasplit}
Splitting the bilateral lattice sum at $n=0$ gives, for $0<|q|<1$ and $z\ne0$,
\[
\sum_{n\in\Z}q^{n^{2}}z^{n}=\theta_{+}(q,z)+\theta_{+}(q,z^{-1})-1,
\]
whose left side is the Jacobi theta of Example~\ref{ex:gaussian}, equal to
$\theta_3\!\bigl(\tfrac{i}{2\pi}\ln z;\,q\bigr)$ in the convention used there. Each
partial theta in \eqref{eq:gausspartialtheta} therefore splits as
\[
\theta_{+}(q,z)=\theta_3\!\Bigl(\tfrac{i}{2\pi}\ln z;\,q\Bigr)
-\sum_{m\ge1}q^{m^{2}}z^{-m},
\]
a genuine Jacobi theta---the Zak transform of the \emph{uncut} Gaussian
$\widetilde G^{\lambda}_{\varphi,\psi}$ obtained by dropping $\indf{[0,\infty)}$ from
\eqref{eq:Galmostgauss}---minus the complementary tail over the negative lattice
points $n\le-1$ that the cutoff discards. At the sampling points $|z|=e^{-\lambda T}<1$,
so the tail has $|z^{-1}|>1$ yet still converges through the Gaussian factor
$q^{m^{2}}$. Thus the coefficients of Example~\ref{ex:gausspartialtheta} may be
written with the full modular $\theta_3$ in place of $\theta_{+}$, at the cost of
an explicit, rapidly convergent correction.
\end{remark}

\begin{remark}[A shifted spectral density and the Faddeeva function on a line]
\label{rem:gaussshift}
Centering the density at $x_0\in\R$, i.e.\ $\varphi=\psi=e^{-\pi\gamma(x-x_0)^{2}/2}$,
multiplies the group coefficient \eqref{eq:gausscoeff} by the modulation
$e^{-itx_0}$, and hence multiplies the second argument of every partial theta in
\eqref{eq:gausspartialtheta} by $e^{-ix_0T}$:
\[
\ip{R_A(i\lambda)\varphi}{\psi}
=i\gamma^{-1/2}\Bigl[\theta_{+}\!\bigl(q_1,e^{-\lambda}e^{-ix_0}\bigr)
-\lim_{\ell\to\infty}\sum_{k=1}^{\ell}2^{-k}\,
\theta_{+}\!\bigl(q_{2^{-k}},-e^{-\lambda2^{-k}}e^{-ix_0 2^{-k}}\bigr)\Bigr].
\]
The left side is now the Cauchy transform of the shifted Gaussian, the full
Faddeeva value $i\pi\,w\bigl((i\lambda-x_0)\sqrt{\pi\gamma}\bigr)$ with
$w(\zeta)=e^{-\zeta^{2}}\operatorname{erfc}(-i\zeta)$. As $x_0$ ranges over $\R$
the argument $(i\lambda-x_0)\sqrt{\pi\gamma}$ sweeps the horizontal line
$\Im\zeta=\lambda\sqrt{\pi\gamma}$, so the family realizes $w$ along an entire line
through this one dyadic construction, the case $x_0=0$ of
Example~\ref{ex:gausspartialtheta} being where that line meets the imaginary axis.
We verified the identity numerically (e.g.\ $\gamma=\lambda=1$, $x_0=\tfrac{7}{10}$).
\end{remark}

\begin{proposition}[Lattice resolvent coefficients]\label{prop:lattice}
Let $H$ be the adjacency operator of $\Z^{d}$ on $\ell^{2}(\Z^{d};\C)$,
$(H\psi)(\mathbf n)=\sum_{|\mathbf m-\mathbf n|=1}\psi(\mathbf m)$, bounded and self-adjoint
with $\sigma(H)=[-2d,2d]$, and let $\{\delta_{\mathbf n}\}_{\mathbf n\in\Z^{d}}$ be the
standard orthonormal basis. Its propagator coefficients are
\begin{equation}\label{eq:phifamily}
\phi_{\mathbf n}(t):=\langle\delta_{\mathbf n},e^{-itH}\delta_{\mathbf 0}\rangle
=\prod_{i=1}^{d}(-i)^{n_i}J_{n_i}(2t),\qquad \mathbf n\in\Z^{d}.
\end{equation}
For every $\mathbf n$ and every $z=E+i\lambda$ with $E\in\R$ and $\lambda>0$, the resolvent coefficient
$G(\mathbf n;z):=\langle\delta_{\mathbf n},(z-H)^{-1}\delta_{\mathbf 0}\rangle$ is
\begin{equation}\label{eq:latticefamily}
G(\mathbf n;E+i\lambda)=-i\sum_{j\ge0}e^{-j\lambda}e^{ijE}\phi_{\mathbf n}(j)
+i\sum_{k\ge1}2^{-k}\sum_{j\ge0}(-1)^{j}e^{-j\lambda2^{-k}}e^{ij2^{-k}E}\phi_{\mathbf n}(j2^{-k}),
\end{equation}
the series converging absolutely, and the family $\{G(\mathbf n;z)\}_{\mathbf n\in\Z^{d}}$
satisfies the lattice resolvent equation
\begin{equation}\label{eq:latticerec}
z\,G(\mathbf n;z)-\sum_{|\mathbf m-\mathbf n|=1}G(\mathbf m;z)=\delta_{\mathbf n,\mathbf 0}.
\end{equation}
\end{proposition}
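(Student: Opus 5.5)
First I compute the propagator coefficients. On $\ell^{2}(\Z^{d})$ the adjacency operator factorizes as $H=\sum_{i=1}^{d}H_i$, where $H_i$ is the one-dimensional adjacency operator acting in the $i$-th coordinate. The $H_i$ commute, so $e^{-itH}=\prod_i e^{-itH_i}$ and $\phi_{\mathbf n}$ factorizes. For $d=1$ I conjugate by the Fourier series $\ell^{2}(\Z)\to L^{2}(\mathbb T)$, under which $H$ becomes multiplication by $2\cos\theta$. This gives
\[
\langle\delta_n,e^{-itH}\delta_0\rangle=\frac{1}{2\pi}\int_0^{2\pi}e^{-2it\cos\theta}e^{-in\theta}\,d\theta=(-i)^nJ_n(2t)
\]
by the Jacobi--Anger expansion $e^{iz\cos\theta}=\sum_m i^mJ_m(z)e^{im\theta}$ with $z=-2t$, using $J_m(-z)=(-1)^mJ_m(z)$. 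This yields \eqref{eq:phifamily}. The same Fourier picture in $d$ variables gives $\sigma(H)=[-2d,2d]$.

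Next I derive \eqref{eq:latticefamily} from Proposition~\ref{prop:dyadic}, applied to the shifted operator $A:=H-E$. This operator is bounded and self-adjoint with $U_t=e^{-itA}=e^{itE}e^{-itH}$. We have $(z-H)^{-1}=-(A-i\lambda)^{-1}$ for $z=E+i\lambda$.

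Pairing the geometric-series form of \eqref{eq:dyadic} as in \eqref{eq:paired}, with $\varphi=\delta_{\mathbf 0}$ and $\psi=\delta_{\mathbf n}$, gives \eqref{eq:latticefamily} directly. For this I need $\langle U_t\delta_{\mathbf 0},\delta_{\mathbf n}\rangle=e^{itE}\phi_{\mathbf n}(t)$. The pairing in \eqref{eq:latticefamily} has $\delta_{\mathbf n}$ in the first (linear) slot, while the paper's convention is linear in the first argument. So I must check that $\langle\delta_{\mathbf n},e^{-itH}\delta_{\mathbf 0}\rangle$ and $\langle e^{-itH}\delta_{\mathbf 0},\delta_{\mathbf n}\rangle$ agree. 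They agree because $H$ is real symmetric: the matrix of $e^{-itH}$ is symmetric, and the resolvent coefficient is symmetric in $\mathbf n\leftrightarrow\mathbf 0$ by translation and reflection invariance. I expect this bookkeeping of conjugations and signs to be the main obstacle. If the orientation comes out conjugated, I will instead verify the formula at the level of scalars: via the spectral theorem, the claim reduces to the scalar identity behind \eqref{eq:dyadic} integrated against the spectral measure $\mu_{\mathbf 0,\mathbf n}$ on $[-2d,2d]$.

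Absolute convergence comes from boundedness, as in the last paragraph of the proof of Proposition~\ref{prop:zakcoeff}, applied to the bounded operator $A$ with $\sigma(A)\subset[-2d-|E|,2d+|E|]$. Concretely:
\begin{itemize}
\item[(i)] The inner $j$-sums converge absolutely, since $|\phi_{\mathbf n}|\le1$ (Cauchy--Schwarz) and $e^{-j\lambda2^{-k}}$ is geometric.
\item[(ii)] Summing the geometric series, the $k$-th term equals $2^{-k}\langle(1+e^{-\lambda2^{-k}}U_{2^{-k}})^{-1}\delta_{\mathbf 0},\delta_{\mathbf n}\rangle$. Its modulus is $O(2^{-k})$ uniformly, because $1+e^{-(\lambda+ix)2^{-k}}\to2$ uniformly on the compact spectrum.
\end{itemize}
Hence the $\ell$-limit is an absolutely convergent sum and may be written as $\sum_{k\ge1}$.

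Finally, \eqref{eq:latticerec} does not need the series. It is the identity $(z-H)(z-H)^{-1}\delta_{\mathbf 0}=\delta_{\mathbf 0}$ evaluated at $\mathbf n$. Since $H$ is self-adjoint and real, $\langle\delta_{\mathbf n},H g\rangle=\sum_{|\mathbf m-\mathbf n|=1}\langle\delta_{\mathbf m},g\rangle$ for $g=(z-H)^{-1}\delta_{\mathbf 0}$. This gives $zG(\mathbf n;z)-\sum_{|\mathbf m-\mathbf n|=1}G(\mathbf m;z)=\langle\delta_{\mathbf n},\delta_{\mathbf 0}\rangle=\delta_{\mathbf n,\mathbf 0}$.
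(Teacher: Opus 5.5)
Your route is the paper's: separate $H=\sum_i H_i$, compute the one-dimensional coefficient by Fourier series and Jacobi--Anger, apply Proposition~\ref{prop:dyadic} to $A=H-E$ with $(z-H)^{-1}=-(A-i\lambda)^{-1}$, get absolute convergence from the bounded case, and read \eqref{eq:latticerec} off $(z-H)(z-H)^{-1}=I$.

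The step that fails as written is your reconciliation of the inner-product convention. In either convention, $\langle\delta_{\mathbf n},e^{-itH}\delta_{\mathbf 0}\rangle$ and $\langle e^{-itH}\delta_{\mathbf 0},\delta_{\mathbf n}\rangle$ are complex conjugates of each other, so they agree only when they are real. The symmetry of the matrix of $e^{-itH}$ (complex symmetric, not real) gives $(e^{-itH})_{\mathbf n\mathbf 0}=(e^{-itH})_{\mathbf 0\mathbf n}$. That is a statement about transposition, not conjugation, and the same holds for the $\mathbf n\leftrightarrow\mathbf 0$ symmetry of the Green function. In fact $(-i)^{n_1+\cdots+n_d}\prod_i J_{n_i}(2t)$ is purely imaginary when $\sum_i n_i$ is odd; for $d=1$, $(e^{-itH}\delta_0)(1)=-it+O(t^3)$. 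Likewise $\langle(z-H)^{-1}\delta_{\mathbf 0},\delta_{\mathbf 0}\rangle$ has strictly negative imaginary part for $\lambda>0$, so it is never real either. Your fallback through $\mu_{\mathbf 0,\mathbf n}$ just reproduces the same pairing, so it cannot remove a conjugation.

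No symmetry argument is needed. The brackets in \eqref{eq:phifamily}--\eqref{eq:latticerec} must be read as components, $\langle\delta_{\mathbf n},X\delta_{\mathbf 0}\rangle=(X\delta_{\mathbf 0})(\mathbf n)$, i.e.\ linear in the second slot. Under the literal linear-in-the-first-slot reading, \eqref{eq:phifamily} would produce $i^{n_i}$ in place of $(-i)^{n_i}$, and \eqref{eq:latticerec} would hold with $\bar z$ instead of $z$, so the statement itself would be false.

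In the component reading, $(X\delta_{\mathbf 0})(\mathbf n)$ is exactly $\langle X\delta_{\mathbf 0},\delta_{\mathbf n}\rangle$ in the paper's convention. That is precisely what \eqref{eq:paired} with $\varphi=\delta_{\mathbf 0}$, $\psi=\delta_{\mathbf n}$ delivers. Your own Fourier integral $\frac1{2\pi}\int_0^{2\pi}e^{-2it\cos\theta}e^{-in\theta}\,d\theta$ and your last paragraph, which pulls $z$ out of the second slot, already compute in this reading. Replace the symmetry sentence with this observation and the argument is complete and coincides with the paper's.
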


\begin{proof}
The lattice separates as $H=\sum_{i=1}^{d}H_{i}$, where $H_i$ denotes the
one-dimensional nearest-neighbor hopping operator acting in the $i$-th coordinate,
so $e^{-itH}=\prod_{i}e^{-itH_{i}}$ and
$\phi_{\mathbf n}(t)=\prod_{i}\langle\delta_{n_i},e^{-itH_{i}}\delta_{0}\rangle$. The
Jacobi--Anger expansion $e^{-2it\cos k}=\sum_{m\in\Z}(-i)^{m}J_{m}(2t)e^{imk}$ gives
$\langle\delta_{n},e^{-itH_{i}}\delta_{0}\rangle
=\frac1{2\pi}\int_{-\pi}^{\pi}e^{-ikn}e^{-2it\cos k}\,dk=(-i)^{n}J_{n}(2t)$, which is
\eqref{eq:phifamily}. By the same Fourier transform $\ell^2(\Z^d)\to L^2(\mathbb T^d)$,
$H$ is multiplication by the symbol $2\sum_{i=1}^{d}\cos k_i$, whose range as
$\mathbf k$ runs over the connected torus $\mathbb T^d$ is the full interval
$[-2d,2d]$; hence $\sigma(H)=[-2d,2d]$, purely absolutely continuous. Since $H$ is bounded, Proposition~\ref{prop:dyadic} applies to
$A=H-E$; the $\langle\delta_{\mathbf n},\,\cdot\,\delta_{\mathbf 0}\rangle$ matrix element
of \eqref{eq:dyadic}, with
$\langle\delta_{\mathbf n},e^{-it(H-E)}\delta_{\mathbf 0}\rangle=e^{itE}\phi_{\mathbf n}(t)$
and $G(\mathbf n;z)=-\langle\delta_{\mathbf n},(H-E-i\lambda)^{-1}\delta_{\mathbf 0}\rangle$,
gives \eqref{eq:latticefamily}; absolute convergence is the bounded-operator case of
Proposition~\ref{prop:dyadic}. Finally \eqref{eq:latticerec} is the
$\langle\delta_{\mathbf n},\,\cdot\,\delta_{\mathbf 0}\rangle$ element of $(z-H)(z-H)^{-1}=I$.
\end{proof}

\begin{remark}[The one-dimensional family]\label{rem:lattice1d}
For $d=1$ the coefficients are single Bessel functions, $\phi_{n}(t)=(-i)^{n}J_{n}(2t)$,
and \eqref{eq:latticefamily} is a dyadic--Bessel series for the elementary chain Green
function
\[
G(n;z)=\frac{1}{\sqrt{z^{2}-4}}\left(\frac{z-\sqrt{z^{2}-4}}{2}\right)^{\!|n|},
\]
the branch of $\sqrt{z^{2}-4}$ chosen so that $|z-\sqrt{z^{2}-4}|<2$. Each integer $n$ thus
furnishes a distinct closed-form identity, verified to the precision carried (twenty digits
at $|n|\le3$, $z=\tfrac35+\tfrac{7}{10}i$).
\end{remark}

\begin{example}[A lattice Green function and a $\Gamma(\tfrac14)$ value]
\label{ex:lattice}
The diagonal $d=2$ case of Proposition~\ref{prop:lattice} is the planar lattice Green
function $G(z)=G(\mathbf 0;z)=\langle\delta_{\mathbf 0},(z-H)^{-1}\delta_{\mathbf 0}\rangle$
on $\ell^{2}(\Z^{2})$, with $\phi_{\mathbf 0}(t)=J_{0}(2t)^{2}$, so
\eqref{eq:latticefamily} reads
\begin{equation}\label{eq:latticeG}
G(E+i\lambda)=-i\sum_{j\ge0}e^{-j\lambda}e^{ijE}J_{0}(2j)^{2}
+i\sum_{k\ge1}2^{-k}\sum_{j\ge0}(-1)^{j}e^{-j\lambda2^{-k}}e^{ij2^{-k}E}
J_{0}\!\bigl(j2^{1-k}\bigr)^{2}.
\end{equation}
This reproduces the classical closed form $G(z)=\tfrac{2}{\pi z}\,K(4/z)$, with $K$ the
complete elliptic integral of the first kind: across the resolvent set the two agree to
the thirteen digits tested. Figure~\ref{fig:lattice} overlays the reconstruction on the
exact $G$ along the line $E+\tfrac15 i$; the series tracks both the band-edge cusps of
$\Re G$ and the logarithmic van Hove peak of the density of states
$-\tfrac1\pi\Im G$ at $E=0$.

The representation also returns the lattice's special values. At $z=4\sqrt2$ the modulus
$4/z=1/\sqrt2$ is lemniscatic, $K(1/\sqrt2)=\Gamma(\tfrac14)^{2}/(4\sqrt\pi)$, so
\eqref{eq:latticeG} evaluates the return Green function to
\[
G(0,0;4\sqrt2)=\frac{\Gamma(\tfrac14)^{2}}{8\sqrt2\,\pi^{3/2}}=0.20865671041851\ldots,
\]
a $\Gamma(\tfrac14)$ period of the lattice. Where the multiplication operator of
Example~\ref{ex:gausspartialtheta} produced the Faddeeva function through a partial-theta
series, the lattice operator produces the elliptic Green function through a Bessel
series; both are diagonal resolvent coefficients assembled by the dyadic identity from a
closed-form propagator coefficient.
\end{example}

\begin{figure}[ht]
\centering
\includegraphics[width=\textwidth]{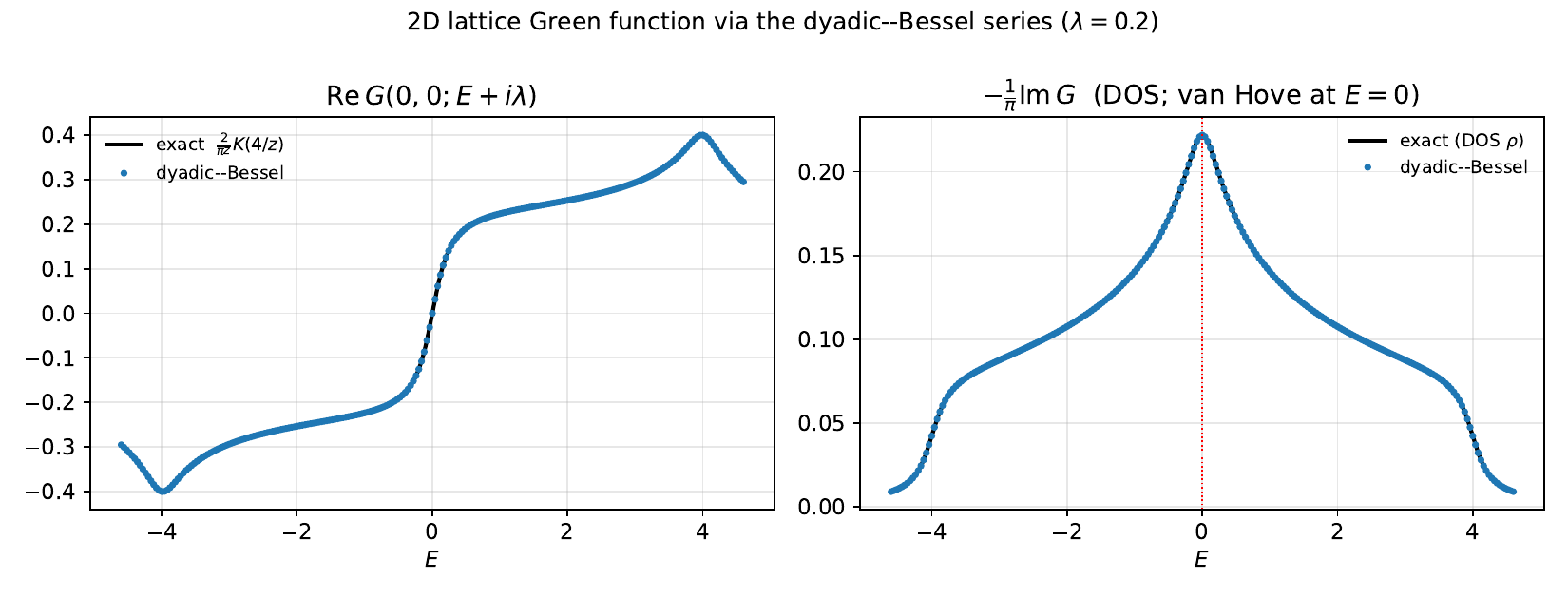}
\caption{The square-lattice Green function $G(0,0;E+i\lambda)$ at $\lambda=\tfrac15$
reconstructed from the dyadic--Bessel series \eqref{eq:latticeG} (markers) against the
closed form $\tfrac{2}{\pi z}K(4/z)$ (solid). Left: $\Re G$, with cusps at the band
edges $E=\pm4$. Right: the density of states $-\tfrac1\pi\Im G$, with the logarithmic
van Hove singularity at $E=0$.}
\label{fig:lattice}
\end{figure}

\begin{corollary}[Return Green functions and the Watson integral]\label{cor:watson}
The diagonal case $\mathbf n=\mathbf 0$ of Proposition~\ref{prop:lattice} represents the
return Green function of $\Z^{d}$ as the dyadic transform of $J_{0}(2t)^{d}$,
\begin{equation}\label{eq:return}
G(\mathbf 0;E+i\lambda)=-i\sum_{j\ge0}e^{-j\lambda}e^{ijE}J_{0}(2j)^{d}
+i\sum_{k\ge1}2^{-k}\sum_{j\ge0}(-1)^{j}e^{-j\lambda2^{-k}}e^{ij2^{-k}E}
J_{0}\!\bigl(j2^{1-k}\bigr)^{d},
\end{equation}
of which Example~\ref{ex:lattice} is the planar case $d=2$. In three dimensions
$J_{0}(2t)^{3}=O(t^{-3/2})$, so the leading sum in \eqref{eq:return} converges absolutely
up to the band edge $z=2d=6$, where the return Green function is half Watson's integral,
\[
G(\mathbf 0;6)=\frac{\sqrt6}{192\,\pi^{3}}\,
\Gamma\!\bigl(\tfrac1{24}\bigr)\Gamma\!\bigl(\tfrac5{24}\bigr)
\Gamma\!\bigl(\tfrac7{24}\bigr)\Gamma\!\bigl(\tfrac{11}{24}\bigr)=0.2527310\ldots
\]
Thus the lemniscatic value $\Gamma(\tfrac14)^{2}/(8\sqrt2\,\pi^{3/2})$ of
Example~\ref{ex:lattice} and the cubic Watson constant are two members of one family of
dyadic--Bessel representations, indexed by dimension through the power of $J_{0}(2t)$.
\end{corollary}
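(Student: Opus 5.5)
The corollary makes three claims. First, the representation \eqref{eq:return}. Second, absolute convergence of the leading sum up to the band edge when $d=3$. Third, the value at $z=6$. I will treat them in that order.

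\emph{The representation.} This is immediate from Proposition~\ref{prop:lattice}. Setting $\mathbf n=\mathbf 0$ in \eqref{eq:phifamily} gives $\phi_{\mathbf 0}(t)=\prod_{i=1}^d J_0(2t)=J_0(2t)^d$. Substituting this into \eqref{eq:latticefamily} yields \eqref{eq:return} for every $E\in\R$ and $\lambda>0$, and absolute convergence is inherited from that proposition. The planar case $d=2$ is literally \eqref{eq:latticeG}.

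\emph{Convergence up to the band edge.} Since $|J_0(x)|\le \min\bigl(1,\sqrt{2/(\pi x)}+O(x^{-3/2})\bigr)$, we have $|J_0(2j)|^3\le C(1+j)^{-3/2}$. Hence $\sum_j e^{-j\lambda}|J_0(2j)|^3\le C\sum_j (1+j)^{-3/2}<\infty$ uniformly in $\lambda\ge0$ and $E\in\R$. By Weierstrass's M-test, the leading sum is continuous on the closed half-plane $\lambda\ge0$, in particular at $z=6$.

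The dyadic part is harder, and it is where I expect the main obstacle. For fixed $k$ the inner alternating series at $\lambda=0$ only converges conditionally: the terms are $(-1)^j e^{ij2^{-k}E}J_0(j2^{1-k})^3$, with $J_0^3$ of size $j^{-3/2}$ times an oscillation. This still converges absolutely for each $k$, since $j^{-3/2}$ is summable, but the sum over $k$ is the delicate point.

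I would handle the dyadic part by passing through the resolvent form \eqref{eq:dyadic} instead of estimating term by term. Write the $k$-th summand as $2^{-k}\langle \delta_{\mathbf 0},(1+e^{-(\lambda+i(H-E))2^{-k}})^{-1}\delta_{\mathbf 0}\rangle$. Because $H$ is bounded with spectrum in $[-6,6]$, the denominator $1+e^{-(\lambda+ix)2^{-k}}$ is bounded away from $0$ uniformly for $|x|\le 12$ once $2^{-k}\cdot 12<\pi$. This holds for all $k\ge 2$ and all $\lambda\ge0$. The finitely many remaining terms need separate care: at $\lambda=0$ their denominators can vanish on a null set of $x$. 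Since the spectral measure $\mu$ of $\delta_{\mathbf 0}$ has a bounded density $\rho(x)=\frac1{(2\pi)^3}\,|\{\mathbf k:2\sum\cos k_i=x\}|'$, a simple pole averaged against a bounded density has a finite principal value. It is cleaner still to note that the full combination \eqref{eq:dyadic} equals $(x-E-i\lambda)^{-1}$ pointwise, so only the singularity at $x=E$ matters.

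\emph{The value at $z=6$.} Dominated convergence in the spectral integral would then give
\[
G(\mathbf 0;6)=\lim_{\lambda\downarrow0}\int\frac{\rho(x)\,dx}{6+i\lambda-x}=\int_{-6}^{6}\frac{\rho(x)}{6-x}\,dx .
\]
This integral is finite because in three dimensions $\rho(x)\sim c\sqrt{6-x}$ near the edge. It equals $\frac1{(2\pi)^3}\int_{\mathbb T^3}\frac{d\mathbf k}{6-2\sum\cos k_i}=\frac12 W_3$, where $W_3$ is Watson's integral $\frac{1}{\pi^3}\int_{[0,\pi]^3}\frac{d\mathbf k}{3-\sum\cos k_i}$. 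I would then simply quote the classical Glasser--Zucker evaluation of $W_3$ in $\Gamma(\tfrac1{24})\cdots\Gamma(\tfrac{11}{24})$, halve it, and check the stated constant numerically.

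The two special values belong to the same family because both are instances of \eqref{eq:return} with exponent $d$. The planar value comes from Example~\ref{ex:lattice}, and the cubic value from the computation above.
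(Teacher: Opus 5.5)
\emph{Verdict: correct on everything the corollary asserts, by the paper's route; the optional extension to the dyadic part at $\lambda=0$ contains a false step that is easily repaired.}

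For the claims the corollary actually makes, your argument matches the paper's inline justification. You set $\mathbf n=\mathbf 0$ in Proposition~\ref{prop:lattice}. You use $J_0(2t)^3=O(t^{-3/2})$ to get absolute convergence of the leading sum on $\lambda\ge0$; the M-test even makes it uniform. You then import the classical Watson value. Your identification $G(\mathbf 0;6)=\lim_{\lambda\downarrow0}\int\rho(x)\,dx/(6+i\lambda-x)=(2\pi)^{-3}\int_{\mathbb T^3}d\mathbf k/(6-2\sum_i\cos k_i)=\tfrac12W_3$ correctly supplies the step the paper leaves implicit.

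One caution when quoting Glasser--Zucker. The frequently cited constant $\frac{\sqrt6}{32\pi^3}\Gamma(\frac1{24})\Gamma(\frac5{24})\Gamma(\frac7{24})\Gamma(\frac{11}{24})\approx1.5164$ belongs to the denominator $1-\frac13\sum_i\cos k_i$, so it equals $3W_3$. Hence $W_3$ carries $\frac{\sqrt6}{96\pi^3}$, and its half is the stated $\frac{\sqrt6}{192\pi^3}$.

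Your third paragraph, on the dyadic correction at $\lambda=0$, goes beyond what the paper claims, and its justification has a false step. A simple pole integrated against a merely bounded density need not have a finite principal value: take $\rho=\indf{[0,1]}$ with the pole at $0$. You would need Dini or H\"older continuity of $\rho$ at the interior pole $x=6-2\pi$. That holds here, since the point is away from the van Hove points $\pm2$, but it is not what you argued. Also, your claim that the inner series ``only converges conditionally'' contradicts your next sentence; the next sentence is the correct one.

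In fact no principal values are needed. For $d=3$ and fixed $k$, the inner $j$-series is dominated by $|J_0(j2^{1-k})|^3\le C_k\,j^{-3/2}$, uniformly in $\lambda\ge0$ and $E$. So each $k$-summand is continuous on $\lambda\ge0$ by the M-test, and the head terms $k=0,1$ pass to $\lambda=0$ for free.

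That same continuity (equivalently, Abel's theorem) is what transfers your bound for $k\ge2$ to the inner series at $\lambda=0$. You proved that bound only for the resolvent form at $\lambda>0$, and you left this transfer implicit. With it, the $k$-series converges uniformly on $\lambda\ge0$, and \eqref{eq:return} holds at $z=6$ with value $G(\mathbf 0;6)$.

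Your alternative remark is also correct: the $k=0$ and $k=1$ poles at $x-E=-2\pi$ cancel, because the full combination is $(x-E-i\lambda)^{-1}$. But it controls only the grouped spectral integrals. It still needs the continuity step above to say anything about the double series as written.
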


\section{A scale family of propagator averages and a Landen recursion}
\label{sec:scale}

In \S\ref{sec:zak} the resolvent coefficient was read off the Zak transform at
the fixed base point $t=0$. We now let the first argument vary. Since shifting
the first argument translates $G$, and $U_{\tau+t}=U_\tau U_t$, the translation
acts on the input vector, $\varphi\mapsto U_t\varphi$; tracking this dependence
isolates a one-parameter family of operators whose scaling recursion already
contains \eqref{eq:dyadic}.

\subsection{Translation and time evolution}

\begin{lemma}\label{lem:translation}
Let $\varphi,\psi\in\Hil$, $\lambda>0$, and $G=G^{\lambda}_{\varphi,\psi}$ as in
\eqref{eq:Gdef}, and write $w_k=2^{-k}+\tfrac12$. For $t\in[0,1)$,
\begin{equation}\label{eq:translead}
(Z_1G)(t,1)=i\,e^{(2\pi i-\lambda)t}\,
\bigl\langle(1-e^{-\lambda}U_1)^{-1}U_t\varphi,\,\psi\bigr\rangle,
\end{equation}
and for $t\in[0,2^{-k})$, $k\ge1$,
\begin{equation}\label{eq:transk}
2^{-k/2}(Z_{2^{-k}}G)(t,w_k)=i\,e^{(2\pi i-\lambda)t}\,2^{-k}\,
\bigl\langle(1+e^{-\lambda 2^{-k}}U_{2^{-k}})^{-1}U_t\varphi,\,\psi\bigr\rangle.
\end{equation}
\end{lemma}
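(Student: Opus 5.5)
Both identities follow from writing out the defining lattice sums of the Zak transform, exactly as in the proof of Proposition~\ref{prop:zakcoeff}, now with a nonzero base point $t$. By \eqref{eq:Gbound}, $G$ is dominated by $C_{\varphi,\psi}e^{-\lambda\tau}\indf{[0,\infty)}(\tau)$. Hence for any $T>0$ and any $(t,w)$ the series $(Z_TG)(t,w)=\sqrt T\sum_{n}G(t+nT)e^{-2\pi i n w}$ converges absolutely and can be handled term by term.

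The restriction $t\in[0,T)$ (with $T=1$ or $T=2^{-k}$) is exactly what makes the cutoff $\indf{[0,\infty)}$ select the terms $n\ge0$: for $n\le-1$ we have $t+nT<0$, while for $n\ge0$ we have $t+nT\ge0$. For $n\ge0$ I write
\[
G(t+nT)=i\,e^{(2\pi i-\lambda)t}\,e^{(2\pi i-\lambda)nT}\,\ip{U_{nT}U_t\varphi}{\psi},
\]
using the group law $U_{t+nT}=U_{nT}U_t=(U_T)^nU_t$.

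For \eqref{eq:translead} I take $T=1$ and $w=1$. The phases $e^{2\pi i n}$ and $e^{-2\pi i n}$ are both $1$, so
\[
(Z_1G)(t,1)=i\,e^{(2\pi i-\lambda)t}\sum_{n\ge0}e^{-n\lambda}\ip{U_1^nU_t\varphi}{\psi}.
\]
For \eqref{eq:transk} I take $T=2^{-k}$ and $w=w_k$. Now the phase is $e^{2\pi i n(2^{-k}-w_k)}=(-1)^n$, as computed in \eqref{eq:kterm}, and the prefactor $2^{-k/2}\sqrt{T}=2^{-k}$.

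Then I sum the geometric series inside the inner product. Since $\|e^{-\lambda T}U_T\|=e^{-\lambda T}<1$, the Neumann series $\sum_{n\ge0}(\pm e^{-\lambda T}U_T)^n$ converges in operator norm to $(1\mp e^{-\lambda T}U_T)^{-1}$. By continuity of the inner product, the scalar series equals $\ip{(1\mp e^{-\lambda T}U_T)^{-1}U_t\varphi}{\psi}$, which yields both formulas.

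I do not expect a real obstacle; this is the $t\neq0$ version of \eqref{eq:firstterm}–\eqref{eq:kterm}. The only points needing care are the following.
\begin{itemize}
\item The half-open range $t\in[0,T)$ is what excludes the $n=-1$ term; at $t=T$ an extra term would appear, consistent with the quasi-periodicity in Lemma~\ref{lem:zakunitary}.
\item The factor $e^{(2\pi i-\lambda)t}$ must be pulled out before summing.
\item The phase $e^{2\pi i nT}$ coming from $e^{2\pi i\tau}$ in $G$ must be combined correctly with $e^{-2\pi i n w}$.
\end{itemize}
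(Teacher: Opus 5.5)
Your proposal is correct and follows the paper's proof essentially step for step. The cell restriction $t\in[0,T)$ leaves only $n\ge0$. The group law $U_{t+nT}=U_{nT}U_t$ lets you pull out $e^{(2\pi i-\lambda)t}$. The phase $e^{2\pi i n(T-w)}$ equals $1$ or $(-1)^n$. Summing the operator-norm Neumann series, with prefactor $\sqrt{2^{-k}}\cdot\sqrt{2^{-k}}=2^{-k}$, gives both identities.
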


\begin{proof}
For $t$ in the fundamental cell $[0,T)$ the support of $G$ forces $n\ge0$ in
$(Z_TG)(t,\cdot)=\sqrt T\sum_n G(t+nT)e^{-2\pi inw}$. Writing
$U_{t+nT}=U_{nT}U_t$ and $e^{2\pi i(t+nT)}=e^{2\pi it}e^{2\pi i n T}$ in
\eqref{eq:Gdef},
\[
G(t+nT)e^{-2\pi inw}
=i\,e^{(2\pi i-\lambda)t}\,e^{-\lambda nT}\,e^{2\pi in(T-w)}\,
\ip{U_{nT}U_t\varphi}{\psi}.
\]
For \eqref{eq:translead}, $T=1$ and $w=1$ give $e^{2\pi in(T-w)}=1$, and summing
the operator-norm convergent geometric series in $e^{-\lambda}U_1$ yields the
resolvent $(1-e^{-\lambda}U_1)^{-1}U_t\varphi$. For \eqref{eq:transk},
$T=2^{-k}$ and $w=w_k$ give $e^{2\pi in(2^{-k}-w_k)}=e^{-\pi in}=(-1)^n$, and
summing the alternating geometric series in $e^{-\lambda 2^{-k}}U_{2^{-k}}$ yields
$(1+e^{-\lambda 2^{-k}}U_{2^{-k}})^{-1}U_t\varphi$; the prefactor
$\sqrt{2^{-k}}\cdot\sqrt{2^{-k}}=2^{-k}$ accounts for the $2^{-k/2}$ on the left.
\end{proof}

\begin{remark}\label{rem:t0}
Each operator $(1\mp e^{-\lambda T}U_T)^{-1}$ is a function of $A$ and so commutes
with $U_t$; thus Lemma~\ref{lem:translation} traces the propagator orbit of a
\emph{filtered} vector,
$t\mapsto e^{(2\pi i-\lambda)t}\ip{U_t\,(1\mp e^{-\lambda T}U_T)^{-1}\varphi}{\psi}$.
The reconstruction of $R_A(i\lambda)$ in Proposition~\ref{prop:zakcoeff} survives
only at $t=0$, which is the unique first argument lying in \emph{every} dyadic
cell $[0,2^{-k})$ at once. For $t>0$ the cell restriction above fails once
$2^{-k}\le t$, and the quasi-periodicity
$(Z_{2^{-k}}G)(t+2^{-k},w_k)=-e^{2\pi i2^{-k}}(Z_{2^{-k}}G)(t,w_k)$ replaces the
evolved coefficient by a quasi-periodically twisted one. So translation yields a
coherent resolvent only at the base point and otherwise a scale-dependent
family, which we now study in its own right.
\end{remark}

\subsection{The scale family and the dyadic identity}

\begin{definition}\label{def:scalefamily}
For $T>0$ and $\lambda>0$ set
\begin{equation}\label{eq:scalefamily}
f_T^{\pm}:=\bigl(1\mp e^{-\lambda T}U_T\bigr)^{-1}
=\sum_{m=0}^{\infty}(\pm1)^m e^{-\lambda Tm}\,U_{Tm},
\end{equation}
the series converging in operator norm because
$\|e^{-\lambda T}U_T\|=e^{-\lambda T}<1$. Each $f_T^\pm$ is a function of $A$,
acting spectrally as multiplication by $(1\mp e^{-T(\lambda+ia)})^{-1}$,
$a\in\sigma(A)$. Equivalently $f_T^+=-e^{\lambda T}(U_T-e^{\lambda T})^{-1}$ and
$f_T^-=e^{\lambda T}(U_T+e^{\lambda T})^{-1}$ are, up to scalars, the resolvents of
the unitary $U_T$ at the points $\pm e^{\lambda T}$ off the unit circle.
\end{definition}

\begin{proposition}[Landen recursion]\label{prop:landen}
For every $T>0$,
\begin{equation}\label{eq:landen}
f_{2T}^{+}=f_T^{+}\,f_T^{-}=\tfrac12\bigl(f_T^{+}+f_T^{-}\bigr).
\end{equation}
\end{proposition}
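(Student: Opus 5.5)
The plan is to prove both equalities in \eqref{eq:landen} at the level of the underlying bounded operators. Set $V:=e^{-\lambda T}U_T$. This is a normal operator with $\|V\|=e^{-\lambda T}<1$, so $1\mp V$ are invertible with bounded inverses given by the norm-convergent Neumann series of \eqref{eq:scalefamily}. Everything is a polynomial or inverse in the single operator $V$, so all operators involved commute. By the group law, $V^{2}=e^{-2\lambda T}U_{T}^{2}=e^{-2\lambda T}U_{2T}$, and hence
\[
f_{2T}^{+}=\bigl(1-e^{-2\lambda T}U_{2T}\bigr)^{-1}=(1-V^{2})^{-1}.
\]

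For the first equality, I factor $1-V^{2}=(1-V)(1+V)=(1+V)(1-V)$. Since both factors are invertible and commute, $(1-V^{2})^{-1}=(1-V)^{-1}(1+V)^{-1}$, which is exactly $f_T^{+}f_T^{-}$.

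For the second equality, I use the partial-fraction identity
\[
\tfrac12\bigl[(1+V)+(1-V)\bigr]=1.
\]
Multiplying on the left by $(1-V)^{-1}(1+V)^{-1}$ and using commutativity gives
\[
\tfrac12\bigl[(1-V)^{-1}+(1+V)^{-1}\bigr]=(1-V)^{-1}(1+V)^{-1},
\]
that is, $\tfrac12(f_T^{+}+f_T^{-})=f_T^{+}f_T^{-}$. As an independent check, the series route also works. Adding the series in \eqref{eq:scalefamily} term by term, the odd powers cancel and the even ones double, so $\tfrac12(f_T^{+}+f_T^{-})=\sum_{m}V^{2m}=\sum_m e^{-2\lambda Tm}U_{2Tm}=f_{2T}^{+}$. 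The rearrangement is legitimate because both series converge absolutely in operator norm.

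There is no real obstacle here. The only points needing care are that commutativity and invertibility hold for possibly unbounded $A$, and both are handled by working with the bounded function $V$ of $A$ via the spectral theorem. Equivalently, one can verify the scalar identity for $v=e^{-T(\lambda+ia)}$, $|v|<1$, and apply the functional calculus.
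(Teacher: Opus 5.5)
Your proof is correct and is essentially the paper's argument. The paper evaluates all three operators by the spectral calculus at $u=e^{-T(\lambda+ia)}$ and checks the scalar identity $\frac{1}{1-u^{2}}=\frac{1}{(1-u)(1+u)}=\frac12\bigl(\frac{1}{1-u}+\frac{1}{1+u}\bigr)$. You run the same identity directly in the commutative algebra generated by the bounded operator $V=e^{-\lambda T}U_T$, using only $U_T^{2}=U_{2T}$ and the Neumann series, and you note yourself that this is equivalent to the spectral-calculus route.
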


\begin{proof}
All three operators are functions of $A$, hence commute and may be evaluated by
the spectral calculus. With $u=e^{-T(\lambda+ia)}$, so that $f_T^\pm$ acts as
$(1\mp u)^{-1}$ and $f_{2T}^{+}$ as $(1-u^2)^{-1}$ (because
$e^{-2T(\lambda+ia)}=u^2$), \eqref{eq:landen} is the scalar identity
\[
\frac{1}{1-u^2}=\frac{1}{(1-u)(1+u)}
=\frac12\Bigl(\frac{1}{1-u}+\frac{1}{1+u}\Bigr),
\qquad |u|=e^{-\lambda T}<1.\qedhere
\]
\end{proof}

\begin{lemma}[Scale limits]\label{lem:scalelimits}
As $T\to\infty$, $f_T^{\pm}\to I$ in operator norm. As $T\to0^+$,
\begin{equation}\label{eq:shortscale}
T\,f_T^{+}\xrightarrow{\ \mathrm{s}\ }(\lambda+iA)^{-1}=-i\,R_A(i\lambda),
\end{equation}
in the strong operator topology; for bounded $A$ one has in addition
$f_T^-\to\tfrac12 I$ in operator norm.
\end{lemma}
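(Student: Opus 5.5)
All three claims reduce, through the spectral theorem, to statements about the scalar multipliers $(1\mp e^{-T(\lambda+ia)})^{-1}$, $a\in\sigma(A)$, recorded in Definition~\ref{def:scalefamily}. For an operator $g(A)$ with $g$ bounded Borel on $\sigma(A)$ we have $\|g(A)\|\le\sup_{a\in\sigma(A)}|g(a)|$. The norm statements will therefore come from uniform scalar estimates. The strong statement will come from dominated convergence applied to the spectral measures $\mu_\varphi$:
\[
\|(h_T(A)-h(A))\varphi\|^2=\int|h_T(a)-h(a)|^2\,d\mu_\varphi(a).
\]

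\emph{Large $T$.} Write $u=e^{-T(\lambda+ia)}$, so $|u|=e^{-\lambda T}$. Then
\[
(1\mp u)^{-1}-1=\pm u/(1\mp u),
\]
and its modulus is at most $e^{-\lambda T}/(1-e^{-\lambda T})$, uniformly in $a\in\R$. Hence $\|f_T^\pm-I\|\le e^{-\lambda T}/(1-e^{-\lambda T})\to0$ as $T\to\infty$. Equivalently, one can bound the tail of the operator-norm series \eqref{eq:scalefamily}.

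\emph{Short scale, strong limit.} Set $h_T(a)=T(1-e^{-T(\lambda+ia)})^{-1}$ and $s=\lambda+ia$. Since $(1-e^{-Ts})/T\to s$, we get $h_T(a)\to 1/(\lambda+ia)$ pointwise for each fixed $a$. The limit multiplier $1/(\lambda+ia)$ is the symbol of $(\lambda+iA)^{-1}=-i(A-i\lambda)^{-1}$, which is the identification in \eqref{eq:shortscale}.

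The main step is a bound on $|h_T(a)|$ uniform in $a\in\R$ and in small $T$, which dominated convergence requires. I would use
\[
|1-e^{-Ts}|\ge 1-e^{-\lambda T}
\quad\Longrightarrow\quad
|h_T(a)|\le \frac{T}{1-e^{-\lambda T}}.
\]
The right-hand side tends to $1/\lambda$ as $T\to0^+$, so it is bounded by a constant $C$ for $0<T\le1$. The limit function $1/(\lambda+ia)$ is bounded by $1/\lambda$. The integrand $|h_T-h|^2$ is therefore dominated by $(C+1/\lambda)^2$, which is $\mu_\varphi$-integrable because $\mu_\varphi$ is finite. Dominated convergence gives strong convergence.

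The convergence cannot be in norm for unbounded $A$. When $aT$ is close to $2\pi$, $e^{-iaT}$ is close to $1$, and $h_T(a)$ is then about $T/(1-e^{-\lambda T})\approx1/\lambda$. The limit $1/(\lambda+ia)$, by contrast, is small at such large $a$. This is why only strong convergence is asserted.

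\emph{Bounded $A$, $f_T^-\to\tfrac12 I$.} Suppose $\sigma(A)\subset[-M,M]$. Then $e^{-T(\lambda+ia)}\to1$ uniformly for $|a|\le M$ as $T\to0^+$. The denominator $1+e^{-T(\lambda+ia)}$ thus tends to $2$ uniformly, and its modulus stays above $1$ for small $T$. Consequently $\sup_{|a|\le M}|(1+u)^{-1}-\tfrac12|\to0$, and the functional-calculus norm bound gives $f_T^-\to\tfrac12 I$ in operator norm.
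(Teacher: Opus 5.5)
Your proposal is correct and follows the paper's proof essentially step for step: the same geometric tail bound $e^{-\lambda T}/(1-e^{-\lambda T})$ for $T\to\infty$, the same spectral multiplier $h_T$ with a uniform bound obtained from $|1-e^{-Ts}|\ge 1-e^{-\lambda T}$ followed by dominated convergence for the strong limit, and the same uniform scalar convergence on $[-M,M]$ for $f_T^-\to\tfrac12 I$. The only cosmetic difference is that the paper makes the bound explicit via $1-e^{-x}\ge xe^{-x}$, giving $|h_T|\le e^{T\lambda}/\lambda$, whereas you bound $T/(1-e^{-\lambda T})$ by its limit $1/\lambda$ and continuity on $(0,1]$. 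Your extra remark that the convergence fails in norm for unbounded $A$ is correct but goes beyond what the statement asserts.
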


\begin{proof}
For $T\to\infty$,
$\|f_T^\pm-I\|=\bigl\|\sum_{m\ge1}(\pm e^{-\lambda T}U_T)^m\bigr\|
\le e^{-\lambda T}/(1-e^{-\lambda T})\to0$.
For \eqref{eq:shortscale}, $T f_T^{+}$ acts spectrally as
$h_T(a)=T\,(1-e^{-T(\lambda+ia)})^{-1}$. Since
$|1-e^{-T(\lambda+ia)}|\ge 1-e^{-T\lambda}\ge T\lambda\,e^{-T\lambda}$
(using $1-e^{-x}\ge xe^{-x}$, $x\ge0$, with $x=T\lambda$), we have the bound
\[
|h_T(a)|\le \frac{T}{T\lambda\,e^{-T\lambda}}=\frac{e^{T\lambda}}{\lambda},
\]
uniform in $a\in\R$ and in $T\in(0,1]$. Pointwise, $1-e^{-T(\lambda+ia)}
=T(\lambda+ia)(1+O(T))$, so $h_T(a)\to(\lambda+ia)^{-1}$ for each $a$. By the
spectral theorem and dominated convergence (the constant
$e^{\lambda}/\lambda$ majorizing the integrands against any finite spectral
measure $\mu_\xi$), $h_T(A)\xi\to(\lambda+iA)^{-1}\xi$ for all $\xi$. Finally
$(\lambda+iA)^{-1}=\bigl(i(A-i\lambda)\bigr)^{-1}=-i\,R_A(i\lambda)$. For bounded
$A$ with $\sigma(A)\subset[-M,M]$, the resonances of $f_T^-$ (where
$e^{-T(\lambda+ia)}\to-1$) occur at $|a|\sim\pi/T\to\infty$ and leave the spectrum
for small $T$, so $(1+e^{-T(\lambda+ia)})^{-1}\to\tfrac12$ uniformly on
$[-M,M]$.
\end{proof}

\begin{theorem}\label{thm:telescope}
The Landen recursion telescopes to \eqref{eq:dyadic}. With $T_k=2^{-k}T_0$,
\begin{equation}\label{eq:finitetelescope}
f_{T_0}^{+}=2^{-N}f_{T_N}^{+}+\sum_{k=1}^{N}2^{-k}f_{T_k}^{-}
\qquad(N\ge1),
\end{equation}
and, letting $N\to\infty$ in the strong operator topology,
\begin{equation}\label{eq:resolvent-from-scale}
R_A(i\lambda)=i\,T_0\Bigl(f_{T_0}^{+}-\sum_{k=1}^{\infty}2^{-k}f_{T_k}^{-}\Bigr),
\end{equation}
which at $T_0=1$ is \eqref{eq:dyadic}. In particular the dyadic series on the
right of \eqref{eq:resolvent-from-scale} converges strongly.
\end{theorem}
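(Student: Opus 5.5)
Use the Landen recursion (Proposition~\ref{prop:landen}) in its additive form, $f_{2T}^{+}=\tfrac12(f_T^{+}+f_T^{-})$. With $T=T_k$ (so $2T=T_{k-1}$) this reads
\[
f_{T_{k-1}}^{+}=\tfrac12 f_{T_k}^{+}+\tfrac12 f_{T_k}^{-}.
\]
Prove \eqref{eq:finitetelescope} by induction on $N$. The case $N=1$ is exactly this identity with $k=1$. For the inductive step, substitute $f_{T_N}^{+}=\tfrac12 f_{T_{N+1}}^{+}+\tfrac12 f_{T_{N+1}}^{-}$ into the term $2^{-N}f_{T_N}^{+}$. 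This produces $2^{-(N+1)}f_{T_{N+1}}^{+}$ plus the new summand $2^{-(N+1)}f_{T_{N+1}}^{-}$. Everything is an identity between bounded operators that are functions of $A$, so no convergence issues arise at this stage.

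Next, pass to the limit $N\to\infty$. Rewrite the remainder as
\[
2^{-N}f_{T_N}^{+}=\frac{1}{T_0}\,T_N f_{T_N}^{+}.
\]
Since $T_N\to0^+$, Lemma~\ref{lem:scalelimits}, eq.~\eqref{eq:shortscale}, gives $T_Nf_{T_N}^{+}\to -iR_A(i\lambda)$ strongly. Hence the partial sums $\sum_{k=1}^N2^{-k}f_{T_k}^{-}=f_{T_0}^{+}-2^{-N}f_{T_N}^{+}$ converge strongly to $f_{T_0}^{+}+\tfrac{i}{T_0}R_A(i\lambda)$. This simultaneously proves strong convergence of the dyadic series, since its partial sums equal a difference of two strongly convergent families. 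Solving for $R_A(i\lambda)$ gives
\[
R_A(i\lambda)=-iT_0\Bigl(\sum_{k\ge1}2^{-k}f_{T_k}^{-}-f_{T_0}^{+}\Bigr),
\]
which is \eqref{eq:resolvent-from-scale}.

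Finally, at $T_0=1$ substitute the definitions $f_1^{+}=(1-e^{-\lambda}U_1)^{-1}$ and $f_{2^{-k}}^{-}=(1+e^{-\lambda2^{-k}}U_{2^{-k}})^{-1}$ from Definition~\ref{def:scalefamily}. This gives the first line of \eqref{eq:dyadic} verbatim.

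The only step needing care is the limit. The key point is that convergence of the series is not established by any norm estimate on the terms, which would fail for unbounded $A$ (Remark~\ref{rem:absconv}). It is instead deduced from the telescoped identity together with the short-scale limit \eqref{eq:shortscale}. That limit rests on the uniform bound $|h_T(a)|\le e^{T\lambda}/\lambda$ and dominated convergence, both already established in Lemma~\ref{lem:scalelimits}.
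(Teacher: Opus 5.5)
Your proposal is correct and follows the paper's proof essentially step for step. You prove the finite identity \eqref{eq:finitetelescope} by induction using the averaging form $f_{T_{k-1}}^{+}=\tfrac12(f_{T_k}^{+}+f_{T_k}^{-})$ of \eqref{eq:landen}, then rewrite $2^{-N}f_{T_N}^{+}=T_0^{-1}T_Nf_{T_N}^{+}$ and apply the short-scale limit \eqref{eq:shortscale}, which gives strong convergence of the partial sums and, after rearranging, \eqref{eq:resolvent-from-scale} — including the point that convergence comes from the telescoped identity rather than from any termwise norm bound.
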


\begin{proof}
The finite identity \eqref{eq:finitetelescope} is an induction on $N$ using the
averaging form $f_{T_{k-1}}^{+}=\tfrac12(f_{T_k}^{+}+f_{T_k}^{-})$ of
\eqref{eq:landen} (valid since $T_{k-1}=2T_k$). The case $N=1$ is the averaging
law itself; assuming \eqref{eq:finitetelescope} for $N$ and substituting
$f_{T_N}^{+}=\tfrac12(f_{T_{N+1}}^{+}+f_{T_{N+1}}^{-})$ gives it for $N+1$.

By Lemma~\ref{lem:scalelimits},
$2^{-N}f_{T_N}^{+}=T_0^{-1}\bigl(T_N f_{T_N}^{+}\bigr)
\xrightarrow{\ \mathrm{s}\ }T_0^{-1}(\lambda+iA)^{-1}=-iT_0^{-1}R_A(i\lambda)$.
Hence the partial sums in \eqref{eq:finitetelescope} converge strongly,
\[
\sum_{k=1}^{N}2^{-k}f_{T_k}^{-}
=f_{T_0}^{+}-2^{-N}f_{T_N}^{+}
\xrightarrow{\ \mathrm{s}\ }f_{T_0}^{+}+iT_0^{-1}R_A(i\lambda),
\]
which rearranges to \eqref{eq:resolvent-from-scale}. Taking
$T_0=1$ recovers \eqref{eq:dyadic}.
\end{proof}

Theorem~\ref{thm:telescope} is an alternative, self-contained derivation of
Proposition~\ref{prop:dyadic}: the only inputs are the elementary recursion
\eqref{eq:landen} and the short-scale limit \eqref{eq:shortscale}, the latter
proved directly from the spectral theorem. The identity \eqref{eq:dyadic} of
\cite{CCC} thus appears as the telescoped Landen recursion run from a base scale
$T_0$ down to the resolvent at $T\to0$.

\begin{remark}[Discrete convolution and a filter bank]\label{rem:filterbank}
Read $f_T^-$ along the sampled orbit. With $\xi=f_T^-\varphi$, applying $U_{nT}$
to $(1+e^{-\lambda T}U_T)\xi=\varphi$ and pairing with $\psi$ gives, for
$y[n]=\ip{U_{nT}\xi}{\psi}$ and $x[n]=\ip{U_{nT}\varphi}{\psi}$, the first-order
recurrence
\[
y[n]+e^{-\lambda T}\,y[n+1]=x[n]\qquad(n\ge0),
\]
so $f_T^-$ acts on the orbit as a one-pole recursive (IIR) filter with transfer
function $(1+e^{-\lambda T}z)^{-1}$ and pole at $z=-e^{\lambda T}$; the
companion $f_T^+$ replaces $+$ by $-$. The pole lies off the unit circle exactly
by the stability margin $e^{-\lambda T}<1$ used throughout, and the Zak transform
in its second argument is the discrete-time Fourier transform that diagonalizes
the recurrence. In these terms Theorem~\ref{thm:telescope} realizes the single
continuous-time resolvent $R_A(i\lambda)$ as an octave-spaced filter bank: a
dyadic family of one-pole filters $f_{2^{-k}}^-$, sampling the propagator at rates
$2^{k}$ with weights $2^{-k}$, synthesizing the resolvent in the short-scale
limit.
\end{remark}

\begin{remark}[Landen tower]\label{rem:landentower}
The multiplicative form $f_{2T}^{+}=f_T^{+}f_T^{-}$ of \eqref{eq:landen} is the
operator shadow of the Landen (Gauss) duplication, with $T\mapsto2T$ its base
step. Replacing the
geometric spectral symbol---the polylogarithm $\Lis_0$ implicit in
\eqref{eq:scalefamily}---by the half-integer polylogarithm $\Lis_{1/2}$ turns the
same recursion into the heat-kernel identity \eqref{eq:polylog}; more generally
every index $s<1$ carries its own dyadic identity, recorded below as
\eqref{eq:genpolylog}. The scale and index directions are compatible: since
$\partial_p\Lis_s(e^{-p})=-\Lis_{s-1}(e^{-p})$, and the chain rule converts the
weight $2^{-k(1-s)}$ into $2^{-k(1-(s-1))}$, differentiation in $p$ carries
\eqref{eq:genpolylog} at index $s$ exactly onto \eqref{eq:genpolylog} at index
$s-1$. This ladder shifts $s$ by integers, so it links the rungs within a single
class $s\bmod1$ but not across classes; in particular the geometric rung $s=0$
behind \eqref{eq:dyadic} and the heat rung $s=\tfrac12$ of
\S\ref{sec:fundamental} are not connected by it.
\end{remark}

\section{Resolvent of the Laplacian}\label{sec:laplacian}

We give an application of Proposition~\ref{prop:dyadic} providing an explicit
expansion of the resolvent of a self-adjoint operator $A$ in terms of a discrete
subset of the unitary group it generates. For the operator-theoretic results
used below we refer to \cite{ReedSimonII}, Vol.~II, \S IX.7. As in
Remark~\ref{rem:average}, the first line of \eqref{eq:dyadic} may be written as a
weighted average of resolvents of the sampled propagators:
\begin{equation}\label{eq:weighted}
(A-i\lambda)^{-1}
=-i\,e^{\lambda}R_{U_1}(e^{\lambda})
-i\sum_{k=1}^{\infty}2^{-k}\,e^{\lambda 2^{-k}}\,
   R_{U_{2^{-k}}}\!\bigl(-e^{\lambda 2^{-k}}\bigr),
\end{equation}
where $R_U(z)=(U-z)^{-1}$ denotes the resolvent of $U$.

The unitary group is of fundamental importance for linear evolution equations:
the propagator generated by a given operator produces the solution when applied
to the initial condition. Consider a free particle of mass $\tfrac12$ in $\R^n$
in units with $\hbar=1$, with state vector $u(t)\in L^2(\R^n)$ for
$t\in[0,\infty)$, evolving by the Schr\"odinger equation
\begin{equation}\label{eq:schrod}
\frac{du}{dt}=-iAu,\qquad A=-\Delta=-\sum_{j=1}^{n}\partial_{x_j}^{2},
\end{equation}
that is, $\tfrac{du}{dt}=i\Delta u$, on the domain
\[
\mathcal{D}_A=\{f\in L^2(\R^n):\Delta f\in L^2(\R^n)\ \text{in the sense of distributions}\}.
\]
The unitary group $U_t=e^{-itA}$ then carries an initial state $u_0$ to
$u(t)=U_t u_0$. The main result of this section is the following.

\begin{proposition}\label{prop:laplacian}
Let $\varphi\in L^2(\R^n)\cap L^1(\R^n)$ and $\lambda>0$. The resolvent of the
Laplacian $A=-\Delta$ evaluated along the positive imaginary axis admits the
representation
\begin{equation}\label{eq:laplacian}
(A-i\lambda)^{-1}\varphi
= i\varphi
+ i\sum_{j=1}^{\infty}e^{-j\lambda}\,P_0(x;j)*\varphi
- i\lim_{\ell\to\infty}\sum_{k=1}^{\ell}\sum_{j=0}^{\infty}
   (-1)^{j}\,2^{-k}\,e^{-j\lambda/2^{k}}\,P_0(x;j2^{-k})*\varphi,
\end{equation}
the convolutions being taken over $\R^n$ and the limit understood in $L^2(\R^n)$,
where $P_0$ is the free propagator of \eqref{eq:propagator}.
\end{proposition}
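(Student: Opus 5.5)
The plan is to apply Proposition~\ref{prop:dyadic}, in its second (geometric-series) form, to the self-adjoint operator $A=-\Delta$ on $\mathcal D_A$, acting on the vector $\varphi$. Convergence is in the strong operator topology, so applying \eqref{eq:dyadic} to $\varphi$ gives an identity in $L^2(\R^n)$:
\[
(A-i\lambda)^{-1}\varphi=i\sum_{j\ge0}e^{-j\lambda}U_j\varphi-i\lim_{\ell\to\infty}\sum_{k=1}^{\ell}\sum_{j\ge0}(-1)^j2^{-k}e^{-j\lambda/2^k}U_{j2^{-k}}\varphi .
\]
The inner $j$-series converge in $L^2$-norm, since each term has norm at most $e^{-j\lambda 2^{-k}}\|\varphi\|$. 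The $j=0$ term of the first sum is $U_0\varphi=\varphi$, which produces the isolated $i\varphi$ in \eqref{eq:laplacian}. In the second sum the $j=0$ terms $2^{-k}\varphi$ are kept inside the sum, exactly as the statement writes them, with the convention $P_0(\cdot;0)*\varphi=\varphi$.

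The remaining step is to identify $U_t\varphi$ for $t>0$ with the convolution $P_0(\cdot;t)*\varphi$. I take \eqref{eq:propagator} to be $P_0(x;t)=(4\pi i t)^{-n/2}e^{i|x|^2/(4t)}$, with the standard branch. By the spectral theorem realized through the Fourier transform, $U_t$ is the Fourier multiplier $e^{-it|\xi|^2}$ on $L^2$. For $\varphi\in L^1\cap L^2$ I would use the Gaussian regularization $e^{-(\epsilon+it)|\xi|^2}$ with $\epsilon>0$. Its inverse Fourier transform is the complex Gaussian kernel $(4\pi(\epsilon+it))^{-n/2}e^{-|x|^2/(4(\epsilon+it))}$, which lies in $L^1$, so the convolution formula holds exactly at each $\epsilon>0$.

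Then let $\epsilon\to0^+$. On the Fourier side, the regularized multipliers converge to $U_t\varphi$ in $L^2$ by dominated convergence. On the physical side, the kernels converge pointwise and are bounded uniformly by $(4\pi t)^{-n/2}$. Since $\varphi\in L^1$, dominated convergence gives pointwise convergence of the convolution to $P_0(\cdot;t)*\varphi$, and this integral is absolutely convergent precisely because $\varphi\in L^1$. Since $L^2$ convergence implies a.e.\ convergence along a subsequence, the two limits agree a.e., so $U_t\varphi=P_0(\cdot;t)*\varphi$ in $L^2$.

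Substituting this identity term by term into the strong-operator identity gives \eqref{eq:laplacian}, with the $\ell$-limit understood in $L^2$ as inherited from Proposition~\ref{prop:dyadic}. I expect the main obstacle to be this identification step: $P_0(\cdot;t)$ is not integrable, so Young's inequality is unavailable. The point to handle carefully is why the hypothesis $\varphi\in L^1$ is needed, namely that it makes $P_0*\varphi$ a genuine pointwise integral agreeing with the $L^2$ unitary action. The rest is bookkeeping of the $j=0$ terms.
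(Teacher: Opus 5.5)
Your proposal is correct and follows the paper's proof. You apply the geometric-series form of Proposition~\ref{prop:dyadic} to $\varphi$, identify $U_t\varphi=P_0(\cdot;t)*\varphi$ for $\varphi\in L^1\cap L^2$ through the Gaussian regularization $\epsilon+it\to it$, and isolate the $j=0$ terms using the convention $P_0(\cdot;0)*\varphi=\varphi$, as the paper does; you additionally carry out the $\epsilon\to0^+$ limit that the paper only cites (the uniform bound $(4\pi t)^{-n/2}$, dominated convergence on both sides, agreement a.e.\ along a subsequence), and because you identify $U_t\varphi$ directly at each sampling time $t=j2^{-k}$, you never need the convolution semigroup law.
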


\begin{proof}
We assemble the standard facts.

\begin{proposition}[{\cite{ReedSimonII,SimonOT}}]\label{prop:selfadj}
The operator $-\Delta$ with domain $\mathcal{D}_A$ is self-adjoint, and for
$\varphi\in\mathcal{D}_A$,
\begin{equation}\label{eq:multiplier}
-\Delta\varphi=\bigl(\Fou^{-1}\circ M_{|\xi|^2}\circ\Fou\bigr)\varphi
=\Fou^{-1}\!\bigl(|\xi|^2\widehat{\varphi}(\xi)\bigr).
\end{equation}
Moreover $\varphi\in\mathcal{D}_A$ if and only if $|\xi|^2\widehat\varphi\in L^2(\R^n)$.
\end{proposition}

Here $M_{|\xi|^2}$ is the densely defined multiplication operator on $L^2$ and
$\Fou$ is the Fourier transform, extended to an isometry of $L^2(\R^n)$ by
Plancherel \cite{ReedSimonII,SimonHA}.

\begin{note}
Relation \eqref{eq:multiplier} is the spectral theorem in the form in which
$\Fou$ conjugates functions of $\Delta$ into multiplication operators.
\end{note}

By the Borel functional calculus, for any bounded Borel function $f$ on $\R$ the
operator $f(A)=\Fou^{-1}\circ M_{f(|\xi|^2)}\circ\Fou$ is well-defined (the
domain of $f$ may be taken to be $\R$ because $A$ is self-adjoint). To construct
$U_t=e^{-itA}$ we use the following.

\begin{theorem}[{\cite{ReedSimonII}}]\label{thm:convolution}
Assume $f\in L^\infty(\R^n)$ and either $f\in L^2(\R^n)$ or
$\check f\in L^1(\R^n)$. Then for every $\varphi\in L^2(\R^n)$,
\begin{equation}\label{eq:Finv}
\Fou^{-1}(f\widehat\varphi)=(2\pi)^{-n/2}\int_{\R^n}\check f(x-y)\,\varphi(y)\,dy.
\end{equation}
If $f\in L^2(\R^n)$ the integral converges for all $x$; if $\check f\in L^1(\R^n)$
it converges for a.e.\ $x$.
\end{theorem}

We need $f(x)=e^{-itx^2}$ to construct $U_t=e^{-itA}$; this $f$ does not satisfy
the hypotheses of Theorem~\ref{thm:convolution}. The standard remedy is to take
$\alpha\in\C$ with $\Re\alpha>0$, treat $e^{-|\xi|^2\alpha}$, and pass to the
limit $\alpha\to it$, which yields, for all $\varphi\in L^2(\R^n)\cap L^1(\R^n)$,
\begin{equation}\label{eq:propagator}
e^{-itA}\varphi
=\frac{1}{(4\pi i t)^{n/2}}\int_{\R^n}e^{\,i|x-y|^2/4t}\,\varphi(y)\,dy
=P_0(\cdot\,;t)*\varphi,
\qquad
P_0(x;t):=\frac{1}{(4\pi i t)^{n/2}}\,e^{\,i|x|^2/4t},
\end{equation}
the function $P_0$ being the \emph{free propagator}~\cite{ReedSimonII}.

\begin{remark}\label{rem:general}
For general $\varphi\in L^2(\R^n)$ the propagator is realized as an $L^2$ limit
of integrals over spheres of increasing radius $R\to\infty$; see
\cite{ReedSimonII}, Vol.~II, \S IX.7.
\end{remark}

Specializing \eqref{eq:propagator} to the dyadic times and using
$U_t^{\,j}=U_{jt}$ together with the convolution semigroup property
$P_0(\cdot;s)*P_0(\cdot;t)=P_0(\cdot;s+t)$ gives, for
$\varphi\in L^2(\R^n)\cap L^1(\R^n)$,
\begin{equation}\label{eq:U1}
U_1\varphi=\frac{1}{(4\pi i)^{n/2}}\int_{\R^n}e^{\,i|x-y|^2/4}\varphi(y)\,dy
=P_0(x;1)*\varphi,
\end{equation}
\begin{equation}\label{eq:U2k}
U_{2^{-k}}\varphi
=\frac{2^{nk/2}}{(4\pi i)^{n/2}}\int_{\R^n}e^{\,2^{k-2}i|x-y|^2}\varphi(y)\,dy
=P_0(x;2^{-k})*\varphi .
\end{equation}
Substituting \eqref{eq:U1}--\eqref{eq:U2k} into the geometric-series form of
\eqref{eq:dyadic}, applied to $\varphi$, and isolating the $j=0$ term
$U_0\varphi=\varphi$ of the first sum produces \eqref{eq:laplacian}. In the inner
dyadic sum the $j=0$ term is likewise $U_0\varphi=\varphi$; since the kernel
$P_0(x;t)$ is singular at $t=0$ we read $P_0(x;0)*\varphi:=\varphi$ there, the term
being a genuine summand of the resummed resolvent
$(1+e^{-\lambda/2^{k}}U_{2^{-k}})^{-1}\varphi$. As in
Remark~\ref{rem:average}, \eqref{eq:laplacian} again exhibits the action of
$R_A(i\lambda)$ on an initial state as a weighted average of that state evolved
over the dyadically sampled times. For general $\varphi\in L^2(\R^n)$ we refer to
Remark~\ref{rem:general}.
\end{proof}

\section{Dyadic representations of fundamental solutions}\label{sec:fundamental}

Fundamental solutions of constant-coefficient linear partial differential
operators are classical and important objects. We briefly recall the
terminology and the facts we use; all may be found in \cite{SimonRA} or
\cite{FollandPDE}.

\begin{definition}[Fundamental solution]\label{def:fundamental}
A distribution $E\in\D'(\R^n)$ is a \emph{fundamental solution} of a partial
differential operator $P=\sum_{|\alpha|\le N}a_\alpha\partial^\alpha$ if
$P(E)=\delta_0$.
\end{definition}

Here $\alpha\in\N^n$ is a multi-index and $a_\alpha\in\C$, so that $P$ is a
constant-coefficient operator; $\delta_0$ is the Dirac distribution at the
origin, $\delta_0(\varphi)=\varphi(0)$ for $\varphi\in\D(\R^n)$, and the support
may be translated to any $p\in\R^n$, giving $\delta_p(\varphi)=\varphi(p)$.

Fundamental solutions are important because they furnish an ``inverse'' of $P$.
Suppose we wish to solve $Pu=f$ for given $f\in\D(\R^n)$ (or even a
distribution), and that a fundamental solution $E\in\D'(\R^n)$ is known. Then
$f\mapsto E*f$ is a well-defined map $Q:\D(\R^n)\to C^\infty(\R^n)$, and since
differentiation commutes with convolution,
\begin{equation}\label{eq:fundamental}
P(E*f)=\sum_{|\alpha|\le N}a_\alpha\,\partial^\alpha(E*f)
=\Bigl(\sum_{|\alpha|\le N}a_\alpha\,\partial^\alpha E\Bigr)*f
=E*\Bigl(\sum_{|\alpha|\le N}a_\alpha\,\partial^\alpha f\Bigr).
\end{equation}
Since $P(E)=\delta_0$ and $\delta_0*f=f$, restricting to $\D(\R^n)$ yields
\begin{equation}\label{eq:PQ}
PQ=QP=I,
\end{equation}
with $I$ the identity operator. Fundamental solutions are not unique: adding any
solution of the homogeneous equation $Pu=0$ to a fundamental solution produces
another.

The representation~\eqref{eq:dyadic} has a scalar shadow that drives everything
below. We isolate it first.

\begin{proposition}[Scalar dyadic identities]\label{prop:scalardyadic}
For real $p>0$,
\begin{equation}\label{eq:recip}
\frac1p=\frac{1}{1-e^{-p}}-\sum_{k=1}^{\infty}\frac{2^{-k}}{1+e^{-p/2^{k}}},
\end{equation}
and, for the half-integer power,
\begin{equation}\label{eq:polylog}
\pi\,p^{-1/2}
=\Gamma\!\left(\tfrac12\right)
\Bigl[\Lis_{1/2}\!\bigl(e^{-p}\bigr)
-\sum_{k=1}^{\infty}2^{-k/2}\,\Lis_{1/2}\!\bigl(-e^{-2^{-k}p}\bigr)\Bigr].
\end{equation}
Both series converge uniformly on compact subsets of $(0,\infty)$.
\end{proposition}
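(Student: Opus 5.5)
For \eqref{eq:recip} we use the scalar telescoping behind Theorem~\ref{thm:telescope}. Set $u=e^{-p/2^{k}}$ and $g(T)=(1-e^{-pT})^{-1}$. The scalar Landen identity (the proof of Proposition~\ref{prop:landen}) gives
\[
\frac{1}{1-e^{-2pT}}=\frac12\Bigl(\frac{1}{1-e^{-pT}}+\frac{1}{1+e^{-pT}}\Bigr).
\]
Iterating it from $T_0=1$ yields the finite identity
\[
\frac{1}{1-e^{-p}}=\frac{2^{-N}}{1-e^{-p2^{-N}}}+\sum_{k=1}^{N}\frac{2^{-k}}{1+e^{-p2^{-k}}}.
\]
This is \eqref{eq:finitetelescope} at the single spectral point $a=0$ with $\lambda$ replaced by $p$. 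The remainder satisfies $2^{-N}(1-e^{-p2^{-N}})^{-1}\to 1/p$, since $(1-e^{-x})/x\to1$ as $x\to0$. This is the scalar content of Lemma~\ref{lem:scalelimits}, and it gives \eqref{eq:recip}.

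For uniformity on a compact $[a,b]\subset(0,\infty)$, the $k$-th term is bounded by $2^{-k}$, because $1+e^{-x}\ge1$. The Weierstrass M-test therefore gives uniform convergence of the series in \eqref{eq:recip} directly, with no appeal to the remainder.

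For \eqref{eq:polylog} we avoid a Landen recursion for $\Lis_{1/2}$ and instead integrate \eqref{eq:recip} against a Laplace-type weight. The plan rests on the subordination identity
\[
\Gamma(\tfrac12)\,\Lis_{1/2}(\pm e^{-p})=\int_0^\infty \frac{t^{-1/2}\,(\pm1)\,e^{-t}e^{-p}}{1\mp e^{-t-p}}\,dt=\int_0^\infty t^{-1/2}\Bigl(\frac{1}{1\mp e^{-(p+t)}}-1\Bigr)dt,
\]
which follows from termwise $\Gamma$-integrals of the series $\sum_m (\pm1)^m e^{-mp}m^{-1/2}$.

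Cleaner still, we write $p^{-1/2}=\pi^{-1}\int_0^\infty t^{-1/2}(p+t)^{-1}\,dt$ and apply \eqref{eq:recip} at the point $p+t$. This requires rewriting \eqref{eq:recip} so that each term decays in $t$. Using $\sum_k 2^{-k}=1$, it becomes
\[
\frac1q=\frac{e^{-q}}{1-e^{-q}}+\sum_{k\ge1}2^{-k}\,\frac{e^{-q/2^{k}}}{1+e^{-q/2^{k}}}.
\]
With $q=p+t$, the $k$-th summand after integration against $t^{-1/2}$ becomes, via the substitution $t=2^{k}s$, equal to $2^{-k}\cdot2^{k/2}\,\Gamma(\frac12)(-\Lis_{1/2}(-e^{-p/2^{k}}))$. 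The first term gives $\Gamma(\frac12)\Lis_{1/2}(e^{-p})$. Multiplying through by $\pi$ then yields \eqref{eq:polylog}, once the weight $2^{-k/2}$ and the signs are checked against the stated form.

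The interchange of $\int_0^\infty dt$ with $\sum_k$ is justified by Tonelli, since all terms are nonnegative. The same bound gives uniform convergence: the $k$-th term is at most $2^{-k/2}\Gamma(\frac12)|\Lis_{1/2}(-e^{-2^{-k}p})|$, and $|\Lis_{1/2}(-x)|\le1$ for $x\in[0,1]$ because the series is alternating with decreasing terms. The M-test with weights $2^{-k/2}$ finishes the argument.

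The main obstacle is bookkeeping. The exact normalization must match: the factor $\pi$ against $\Gamma(\frac12)^2=\pi$, the scaling $t=2^ks$ producing $2^{-k/2}$, and the sign convention of the alternating polylogarithm. We also need to confirm that the rewriting via $\sum_k2^{-k}=1$ is legitimate, so that each integrand is integrable at $t\to\infty$.
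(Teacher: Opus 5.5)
Your proof is correct. For \eqref{eq:recip} it is essentially the paper's argument. For \eqref{eq:polylog} it takes a genuinely different route.

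For \eqref{eq:recip}, the paper specializes Proposition~\ref{prop:dyadic} to $A=0$ on $\C$. Your scalar Landen telescoping is the one-point case of Theorem~\ref{thm:telescope}, so it is the same mechanism made self-contained.

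For \eqref{eq:polylog}, the paper cites \cite[Lemma~21]{CCC} and points to the duplication relation $\Lis_s(z)+\Lis_s(-z)=2^{1-s}\Lis_s(z^2)$. Telescoping that relation gives
$\Lis_s(e^{-p})=2^{-N(1-s)}\Lis_s(e^{-2^{-N}p})+\sum_{k=1}^{N}2^{-k(1-s)}\Lis_s(-e^{-2^{-k}p})$.
The remainder must then be evaluated with the singular asymptotics $\Lis_s(e^{-x})\sim\Gamma(1-s)x^{s-1}$ as $x\to0^+$. That is a polylogarithm Landen recursion, which is exactly what you set out to avoid.

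You instead take a Stieltjes transform of \eqref{eq:recip}. The bookkeeping you flag does close:
\begin{itemize}
\item The weight integral is $\int_0^\infty t^{-1/2}(p+t)^{-1}\,dt=B(\tfrac12,\tfrac12)\,p^{-1/2}=\pi p^{-1/2}$.
\item The substitution $t=2^ks$ turns the weight $2^{-k}$ into $2^{-k/2}$.
\item The lower sign of your subordination identity supplies the minus.
\item You land exactly on $\pi p^{-1/2}=\Gamma(\tfrac12)[\cdots]$, with no stray factors.
\item The rewriting only subtracts the absolutely convergent $\sum_k2^{-k}=1$, so it is legitimate.
\item Termwise integration in the alternating subordination identity is justified by Fubini, since $\sum_m m^{-1/2}e^{-mp'}<\infty$ for $p'>0$.
\end{itemize}

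What your route buys:
\begin{itemize}
\item No asymptotics of $\Lis_{1/2}$ at $z=1$ are needed, and every interchange is an application of Tonelli.
\item Your bound $|\Lis_{1/2}(-x)|\le x$ gives uniform convergence on all of $(0,\infty)$, not just on compact subsets.
\item It is structurally illuminating. Your rewritten \eqref{eq:recip} is precisely \eqref{eq:genpolylog} at $s=0$, since $\Lis_0(z)=z/(1-z)$. Running the same transform with kernel $t^{s-1}$ gives $\tfrac{\pi}{\sin\pi s}\,p^{s-1}=\Gamma(s)[\cdots]$ for $0<s<1$. By the reflection formula this is \eqref{eq:genpolylog}. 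This is a natural source of the form $\pi p^{-1/2}=\Gamma(\tfrac12)[\cdots]$ and of the reflection formula invoked in the paper's remark.
\item It shows that fractional (Weyl) integration in $p$ links the rungs $s=0$ and $s=\tfrac12$. The integer differentiation ladder of Remark~\ref{rem:landentower} cannot do this.
\end{itemize}

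What it costs: $t^{s-1}$ is not integrable at $0$ for $s\le0$, so on its own your method reaches only $0<s<1$. It covers all $s<1$ only when combined with that differentiation ladder. The duplication telescoping handles every $s<1$ in one stroke.
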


\begin{proof}
Identity~\eqref{eq:recip} is~\eqref{eq:dyadic} specialized to the zero operator
$A=0$ on $\Hil=\C$. Then $U_t\equiv1$ and $(A-i\lambda)^{-1}=i/\lambda$, while
the right-hand side of~\eqref{eq:dyadic} becomes
$i\bigl[(1-e^{-\lambda})^{-1}-\sum_{k\ge1}2^{-k}(1+e^{-\lambda/2^{k}})^{-1}\bigr]$;
canceling $i$ and writing $p=\lambda$ gives~\eqref{eq:recip}. Identity
\eqref{eq:polylog} is the corresponding statement for the Mellin kernel
$p^{-1/2}$: it follows from the same dyadic expansion applied to the
polylogarithm $\Lis_{1/2}$ via the duplication relation
$\Lis_s(z)+\Lis_s(-z)=2^{1-s}\Lis_s(z^2)$; it is the case $s=\tfrac12$ of
\cite[Lemma~21]{CCC}.

For the convergence claims, the $k$-th term of~\eqref{eq:recip} satisfies
$0<2^{-k}(1+e^{-p/2^{k}})^{-1}\le2^{-k}$, so the series is dominated by
$\sum2^{-k}$ uniformly in $p$. In~\eqref{eq:polylog}, as $k\to\infty$ one has
$\Lis_{1/2}(-e^{-2^{-k}p})\to\Lis_{1/2}(-1)=-(1-2^{1/2})\zeta(\tfrac12)$, finite,
so the $k$-th term is $O(2^{-k/2})$ uniformly on compact subsets of $(0,\infty)$,
giving uniform convergence there.
\end{proof}

\begin{remark}
We verified~\eqref{eq:recip} and~\eqref{eq:polylog} numerically to $30$ digits.
In fact the bracket on the right of~\eqref{eq:polylog} is, for \emph{every}
$s<1$, a single $\Gamma$-multiple of the corresponding power: by
\cite[Lemma~21]{CCC} together with the reflection formula
$\Gamma(s)\Gamma(1-s)=\pi/\sin\pi s$,
\begin{equation}\label{eq:genpolylog}
\Lis_s\!\bigl(e^{-p}\bigr)
-\sum_{k=1}^{\infty}2^{-k(1-s)}\Lis_s\!\bigl(-e^{-2^{-k}p}\bigr)
=\Gamma(1-s)\,p^{\,s-1}.
\end{equation}
The exponent $s=\tfrac12$ is merely the self-dual point, where
$\Gamma(s)=\Gamma(1-s)=\sqrt\pi$ and $\sin\pi s=1$; it is the value that enters
the one-dimensional heat kernel below, so we record only that case.
\end{remark}

\subsection{The equations of Laplace and Poisson}
Laplace's equation classifies the harmonic functions, and the Laplace and
Poisson equations sit at the center of much classical physics---electrostatics,
the steady-state heat equation, fluids, and Newtonian gravity among them. On
$\R^n$ the Laplacian in Cartesian coordinates is
$\Delta=\sum_{j=1}^n\partial_{x_j}^2$.

\begin{theorem}[Fundamental solution of Laplace's equation;
\cite{Hormander}]\label{thm:lapfund}
For $n\ge3$ the distribution defined by the locally integrable function
\begin{equation}\label{eq:lapfundsol}
K(x)=-\frac{\Gamma(n/2-1)}{4\pi^{n/2}|x|^{n-2}}
\end{equation}
on $\R^n$ is a fundamental solution of $\Delta$.
\end{theorem}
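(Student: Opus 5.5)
The aim is to show that $\Delta K=\delta_0$ in $\D'(\R^n)$, i.e.\ $\int_{\R^n}K(x)\,\Delta\varphi(x)\,dx=\varphi(0)$ for every $\varphi\in\D(\R^n)$, and the standard route is Green's identity on an annulus. First note that $K$ is locally integrable, since $|x|^{2-n}$ is integrable near $0$ in dimension $n\ge3$ (in polar coordinates the integrand is $r^{2-n}r^{n-1}=r$). It is also bounded away from the origin. So $K$ defines a distribution and $\langle\Delta K,\varphi\rangle=\int K\,\Delta\varphi$.

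Write $c_n=\Gamma(n/2-1)/(4\pi^{n/2})$, so that $K=-c_n|x|^{2-n}$. A direct computation in radial coordinates, using $\Delta f(r)=f''+\frac{n-1}{r}f'$, shows that $\Delta|x|^{2-n}=0$ for $x\neq0$. So $K$ is classically harmonic off the origin.

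Fix $\varphi\in\D(\R^n)$ supported in a ball $B_R$, and split the integral as
\[
\int K\Delta\varphi=\lim_{\epsilon\to0}\int_{\epsilon<|x|<R}K\Delta\varphi,
\]
which is justified by dominated convergence since $K\in L^1_{\rm loc}$. On the annulus, apply Green's second identity with $\Delta K=0$. The outer boundary terms vanish because $\varphi$ is supported inside $B_R$, which leaves
\[
\int_{|x|=\epsilon}\bigl(K\,\partial_\nu\varphi-\varphi\,\partial_\nu K\bigr)\,dS,
\]
with $\nu$ the normal pointing toward the origin (outward for the annulus). The first term is bounded by $c_n\epsilon^{2-n}\cdot\|\nabla\varphi\|_\infty\cdot|S^{n-1}|\epsilon^{n-1}=O(\epsilon)$, so it tends to $0$.

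For the second term, on $|x|=\epsilon$ we have
\[
\partial_\nu K=-\partial_r K=-c_n(n-2)\epsilon^{1-n}.
\]
Hence
\[
-\int\varphi\,\partial_\nu K\,dS=c_n(n-2)\epsilon^{1-n}\int_{|x|=\epsilon}\varphi\,dS\;\longrightarrow\; c_n(n-2)\,|S^{n-1}|\,\varphi(0),
\]
using continuity of $\varphi$ at the origin.

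The only real obstacle is the bookkeeping of signs and constants: the orientation of $\nu$, and checking that $c_n(n-2)|S^{n-1}|=1$. This follows from $|S^{n-1}|=2\pi^{n/2}/\Gamma(n/2)$ and $(n/2-1)\Gamma(n/2-1)=\Gamma(n/2)$:
\[
\frac{\Gamma(n/2-1)(n-2)}{4\pi^{n/2}}\cdot\frac{2\pi^{n/2}}{\Gamma(n/2)}=\frac{(n-2)\Gamma(n/2-1)}{2\Gamma(n/2)}=1.
\]
Combining the two boundary limits gives $\int K\,\Delta\varphi=\varphi(0)$, that is, $\Delta K=\delta_0$.
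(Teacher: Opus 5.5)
Your proof is correct and complete. Each step checks out:
\begin{itemize}
\item $|x|^{2-n}$ is harmonic off the origin;
\item excising a small ball and applying Green's second identity on the annulus is justified, with the limit $\epsilon\to0$ controlled by dominated convergence since $K\in L^1_{\mathrm{loc}}$;
\item the $K\,\partial_\nu\varphi$ term is $O(\epsilon)$;
\item for the normal pointing toward the origin, $\partial_\nu K=-c_n(n-2)\epsilon^{1-n}$ on $|x|=\epsilon$;
\item the normalization $c_n(n-2)|S^{n-1}|=1$ follows from $|S^{n-1}|=2\pi^{n/2}/\Gamma(n/2)$ and $(n/2-1)\Gamma(n/2-1)=\Gamma(n/2)$.
\end{itemize}

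The paper gives no argument of its own for this statement. It simply imports it from H\"ormander as a standard fact, so your excision argument supplies the omitted classical proof. In the paper's context, its concrete value is the explicit verification of the constant $\Gamma(n/2-1)/(4\pi^{n/2})$. Corollary~\ref{cor:lapdyadic} relies on exactly this constant when it specializes to $n=3$, where $\Gamma(\tfrac12)=\sqrt\pi$ gives $K(x)=-1/(4\pi|x|)$.

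A common alternative avoids boundary integrals altogether by regularizing. One computes
\[
\Delta(|x|^2+\epsilon)^{1-n/2}=n(2-n)\,\epsilon\,(|x|^2+\epsilon)^{-1-n/2},
\]
so $\Delta\bigl(-c_n(|x|^2+\epsilon)^{1-n/2}\bigr)$ is a nonnegative approximate identity. Its total mass is $c_n(n-2)|S^{n-1}|=1$, independent of $\epsilon$ after the substitution $x=\sqrt\epsilon\,y$, and $-c_n(|x|^2+\epsilon)^{1-n/2}\to K$ in $L^1_{\mathrm{loc}}$. That route needs the same Gamma-function bookkeeping but no Green's formula. Your route has the advantage of making the source of the delta mass, the flux of $\nabla K$ through small spheres, completely transparent.
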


\begin{corollary}\label{cor:lapdyadic}
For $n=3$ the fundamental solution~\eqref{eq:lapfundsol} of $\Delta$ admits, for
$|x|>0$, the dyadic representation
\begin{equation}\label{eq:lapdecomp}
K(x)=-\frac{1}{4\pi|x|}
=-\frac{1}{4\pi}\Bigl(\frac{1}{1-e^{-|x|}}
-\sum_{k=1}^{\infty}\frac{2^{-k}}{1+e^{-|x|/2^{k}}}\Bigr),
\end{equation}
with convergence uniform on compact subsets of $\R^3\setminus\{0\}$.
\end{corollary}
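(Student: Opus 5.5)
The plan is to obtain the corollary as a direct specialization of the scalar identity \eqref{eq:recip} in Proposition~\ref{prop:scalardyadic}, composed with the map $x\mapsto|x|$. First, set $n=3$ in \eqref{eq:lapfundsol}. Since $\Gamma(\tfrac12)=\sqrt\pi$, the constant becomes $-\sqrt\pi/(4\pi^{3/2})=-1/(4\pi)$, so $K(x)=-1/(4\pi|x|)$ for $x\neq0$. This is the first equality in \eqref{eq:lapdecomp}.

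Next, for $x\in\R^3\setminus\{0\}$ put $p=|x|>0$ in \eqref{eq:recip}. Multiplying by $-1/(4\pi)$ then gives the second equality pointwise. No operator theory is needed here, because \eqref{eq:recip} is already the case $A=0$, $\Hil=\C$ of Proposition~\ref{prop:dyadic}.

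For the convergence statement, let $C\subset\R^3\setminus\{0\}$ be compact. Then $|x|$ ranges over a compact set $[a,b]\subset(0,\infty)$, and the $k$-th term satisfies $0<2^{-k}(1+e^{-|x|/2^k})^{-1}\le 2^{-k}$. The Weierstrass M-test with $\sum 2^{-k}$ therefore gives uniform convergence on $C$; in fact it is uniform on all of $\R^3$. The leading term $(1-e^{-|x|})^{-1}$ is continuous on $C$ and needs no estimate.

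There is no real obstacle; the only point needing care is the $\Gamma$-constant evaluation. One may also remark, though it is not required, that the identity holds only for $|x|>0$, where $K$ is a genuine function. The distributional statement $\Delta K=\delta_0$ is supplied by Theorem~\ref{thm:lapfund} and is not claimed for the series termwise.
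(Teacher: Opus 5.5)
Your proposal is correct and follows the paper's proof essentially verbatim. You evaluate $\Gamma(\tfrac12)=\sqrt\pi$ to get $K(x)=-1/(4\pi|x|)$, substitute $p=|x|$ into \eqref{eq:recip}, and obtain uniform convergence from the $\sum 2^{-k}$ domination of Proposition~\ref{prop:scalardyadic}, exactly as the paper does.
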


\begin{proof}
At $n=3$ one has $\Gamma(n/2-1)=\Gamma(\tfrac12)=\sqrt\pi$, so
$K(x)=-1/(4\pi|x|)$. Since $K$ is spherically symmetric we regard it as a
function of $p=|x|\in(0,\infty)$ and apply~\eqref{eq:recip}. Uniform convergence
on compact subsets of $\R^3\setminus\{0\}$ follows from
Proposition~\ref{prop:scalardyadic}.
\end{proof}

\subsection{The heat equation}
The homogeneous heat equation
\[
\partial_t u=\Delta_x u
\]
governs the evolution of the temperature $u(x,t)$ of a body from an initial
distribution $u(x,0)=f(x)$, with conductivity normalized to one. As with
Laplace's equation---its steady state $\partial_t u=0$---we record the
fundamental solution.

\begin{theorem}[Fundamental solution of the heat equation;
\cite{Hormander}]\label{thm:heatfund}
Writing the variables in $\R^{n+1}$ as $(x,t)\in\R^n\times\R$ and setting
\begin{equation}\label{eq:heatker}
E(x,t)=\begin{cases}
(4\pi t)^{-n/2}e^{-|x|^2/4t}, & t>0,\\[2pt]
0, & t\le0,
\end{cases}
\end{equation}
the function $E$ is locally integrable on $\R^{n+1}$, is $C^\infty$ on
$\R^{n+1}\setminus\{0\}$, and satisfies $(\partial_t-\Delta_x)E=\delta_0$.
\end{theorem}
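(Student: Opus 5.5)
The plan is to prove the three claims of Theorem~\ref{thm:heatfund} in turn: local integrability, smoothness off the origin, and $(\partial_t-\Delta_x)E=\delta_0$ in $\D'(\R^{n+1})$.

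\emph{Local integrability.} Since $E\ge0$, Tonelli lets us integrate in $x$ first. For each $t>0$ we have $\int_{\R^n}E(x,t)\,dx=1$, because $E(\cdot,t)$ is a normalized Gaussian. Hence $\int_{|x|\le R,\,|t|\le R}E\le R$, so $E\in L^1_{\mathrm{loc}}(\R^{n+1})$.

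\emph{Smoothness.} On $\{t>0\}$ the function $E$ is visibly $C^\infty$, and on $\{t<0\}$ it vanishes. It remains to treat points $(x_0,0)$ with $x_0\ne0$. Near such a point $|x|^2\ge c>0$. Every derivative $\partial_x^\alpha\partial_t^jE$ with $t>0$ has the form $Q(x,t^{-1/2})\,e^{-|x|^2/4t}$ with $Q$ a polynomial. Since $t^{-m}e^{-c/4t}\to0$ as $t\downarrow0$, all derivatives extend continuously by zero across $t=0$. So $E$ is $C^\infty$ on $\R^{n+1}\setminus\{0\}$.

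\emph{The distributional equation.} Fix a test function $\phi\in\D(\R^{n+1})$. We must show
\[
\langle(\partial_t-\Delta_x)E,\phi\rangle=\int E\,(-\partial_t\phi-\Delta_x\phi)\,dx\,dt=\phi(0,0).
\]
By dominated convergence, using local integrability, this integral equals the limit as $\varepsilon\to0^+$ of $I_\varepsilon:=\int_\varepsilon^\infty\!\int_{\R^n}E\,(-\partial_t\phi-\Delta_x\phi)\,dx\,dt$. On $t\ge\varepsilon$ the kernel is smooth and the Gaussian factor decays rapidly in $x$. We can therefore integrate by parts freely: twice in $x$ with no boundary terms, and once in $t$, which produces the boundary term at $t=\varepsilon$. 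A direct computation gives $(\partial_t-\Delta_x)E=0$ for $t>0$. We are left with
\[
I_\varepsilon=\int_{\R^n}E(x,\varepsilon)\,\phi(x,\varepsilon)\,dx .
\]

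\emph{The main step.} The remaining task is to show $I_\varepsilon\to\phi(0,0)$. Substitute $x=\sqrt{\varepsilon}\,y$ to get
\[
I_\varepsilon=(4\pi)^{-n/2}\int e^{-|y|^2/4}\,\phi(\sqrt\varepsilon\,y,\varepsilon)\,dy .
\]
The integrand is dominated by $\|\phi\|_\infty(4\pi)^{-n/2}e^{-|y|^2/4}$. By dominated convergence and the normalization of the Gaussian, $I_\varepsilon\to\phi(0,0)$. This approximate-identity step, together with the justification of the integration by parts at the cutoff, is the only delicate point. Both reduce to the Gaussian normalization and the rapid decay of $E(\cdot,t)$ for fixed $t>0$.
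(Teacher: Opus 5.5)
The paper gives no proof of its own here; it imports this classical result from H\"ormander. Your argument is the standard textbook proof of that cited result and is correct: Tonelli with the unit Gaussian mass for local integrability, the decay of $t^{-m}e^{-c/4t}$ for smoothness away from the origin, truncation at $t=\varepsilon$ and integration by parts using $(\partial_t-\Delta_x)E=0$ for $t>0$, and the approximate-identity limit $\int E(x,\varepsilon)\phi(x,\varepsilon)\,dx\to\phi(0,0)$.
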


Specializing~\eqref{eq:polylog} to $p=t$ and using $\Gamma(\tfrac12)=\sqrt\pi$,
\begin{equation}\label{eq:invhalf}
\pi\,t^{-1/2}
=\Gamma\!\left(\tfrac12\right)
\Bigl[\Lis_{1/2}\!\bigl(e^{-t}\bigr)
-\sum_{k=1}^{\infty}2^{-k/2}\,\Lis_{1/2}\!\bigl(-e^{-2^{-k}t}\bigr)\Bigr],
\end{equation}
which yields a representation of the one-dimensional heat kernel.

\begin{corollary}\label{cor:heatdyadic}
The fundamental solution $E(x,t)$ of the one-dimensional heat equation
(Theorem~\ref{thm:heatfund} with $n=1$) admits the representation
\begin{equation}\label{eq:heatdec}
E(x,t)=
\begin{cases}
\displaystyle\frac{1}{2\pi}
\Bigl[\Lis_{1/2}\!\bigl(e^{-t}\bigr)
-\sum_{k=1}^{\infty}2^{-k/2}\,\Lis_{1/2}\!\bigl(-e^{-2^{-k}t}\bigr)\Bigr]
e^{-|x|^2/4t}, & t>0,\\[10pt]
0, & t\le0,
\end{cases}
\end{equation}
with convergence uniform on compact subintervals of $(0,\infty)$ in $t$.
\end{corollary}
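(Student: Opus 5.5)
The plan is to specialize Theorem~\ref{thm:heatfund} to $n=1$ and replace the prefactor $t^{-1/2}$ using the identity \eqref{eq:invhalf}. For $t\le0$ there is nothing to prove, since both sides of \eqref{eq:heatdec} vanish by definition. For $t>0$, Theorem~\ref{thm:heatfund} with $n=1$ gives
\[
E(x,t)=(4\pi t)^{-1/2}e^{-|x|^2/4t}=\frac{1}{2\sqrt\pi}\,t^{-1/2}\,e^{-|x|^2/4t}.
\]

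The key step is to solve \eqref{eq:invhalf} for $t^{-1/2}$. Dividing by $\pi$ and using $\Gamma(\tfrac12)=\sqrt\pi$ gives
\[
t^{-1/2}=\frac{1}{\sqrt\pi}\Bigl[\Lis_{1/2}(e^{-t})-\sum_{k\ge1}2^{-k/2}\Lis_{1/2}(-e^{-2^{-k}t})\Bigr].
\]
Inserting this into the expression for $E$, the constants combine as $\frac{1}{2\sqrt\pi}\cdot\frac{1}{\sqrt\pi}=\frac{1}{2\pi}$. This yields exactly the $t>0$ branch of \eqref{eq:heatdec}, the Gaussian factor $e^{-|x|^2/4t}$ simply multiplying the bracket. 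It can be moved inside the series termwise because it is a fixed finite factor for each $(x,t)$.

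For the convergence claim, I would invoke Proposition~\ref{prop:scalardyadic}: the series in \eqref{eq:polylog} converges uniformly on compact subsets of $(0,\infty)$. Multiplying by $e^{-|x|^2/4t}$, which is bounded by $1$, preserves uniform convergence in $t$ on compact subintervals, and in fact uniformly in $x\in\R$ as well.

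The only point needing care is the input identity \eqref{eq:polylog} itself, which is taken from \cite[Lemma~21]{CCC}. Were it not available, I would derive it from \eqref{eq:recip} by subordination. The idea is to apply $p\mapsto\int_0^\infty u^{-1/2}(\,\cdot\,)(p+u)\,du$ termwise, or equivalently to use the Mellin representation $\Lis_{1/2}(e^{-p})=\sum_m e^{-mp}m^{-1/2}$, and then telescope via the duplication relation as in Theorem~\ref{thm:telescope}. I expect this would be the main obstacle, specifically justifying the interchange of the limit and the integral near $p\to0$. Since the identity is stated earlier in the paper, however, the corollary reduces to the constant bookkeeping above.
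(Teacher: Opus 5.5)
Your proposal is correct and matches the paper's proof step for step: for $t>0$ you write $(4\pi t)^{-1/2}=(2\sqrt\pi)^{-1}t^{-1/2}$, solve \eqref{eq:invhalf} for $t^{-1/2}=\pi^{-1/2}[\cdots]$ so the constants combine to $(2\pi)^{-1}$, note that both sides vanish for $t\le0$, and inherit uniform convergence from Proposition~\ref{prop:scalardyadic}. Your aside on \eqref{eq:polylog} overstates the difficulty, since the duplication relation telescopes to $\Lis_{1/2}(e^{-p})-\sum_{k=1}^{N}2^{-k/2}\Lis_{1/2}(-e^{-2^{-k}p})=2^{-N/2}\Lis_{1/2}(e^{-2^{-N}p})\to\sqrt\pi\,p^{-1/2}$ using only $\Lis_{1/2}(e^{-q})\sim\sqrt\pi\,q^{-1/2}$ as $q\to0^+$; note also that literal termwise subordination of \eqref{eq:recip} would diverge unless you first subtract the constants $1$ and $2^{-k}$, which cancel because $\sum_{k\ge1}2^{-k}=1$.
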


\begin{proof}
For $t>0$ one has $(4\pi t)^{-1/2}=(2\sqrt\pi)^{-1}t^{-1/2}$, and
\eqref{eq:invhalf} gives $t^{-1/2}=\pi^{-1}\Gamma(\tfrac12)[\,\cdots\,]
=\pi^{-1/2}[\,\cdots\,]$; hence the prefactor in~\eqref{eq:heatker} equals
$(2\pi)^{-1}[\,\cdots\,]$, which is~\eqref{eq:heatdec}. For $t\le0$ both sides
vanish. Uniform convergence on compact subintervals of $(0,\infty)$ follows from
Proposition~\ref{prop:scalardyadic}.
\end{proof}

\begin{remark}
The same dyadic mechanism applies to the general power $p^{s-1}$ for any
$s<1$, with weights $2^{-k(1-s)}$, via \cite[Lemma~21]{CCC}. Higher spatial
dimensions enter through the heat-kernel prefactor $(4\pi t)^{-n/2}$, that is,
the power $t^{-n/2}$ with $s=1-\tfrac n2$; the case $n=1$ recorded above is
$s=\tfrac12$. We do not write out the remaining dimensions here and refer to
\cite[\S6]{CCC} for the general framework.
\end{remark}

\begin{remark}
By Corollaries~\ref{cor:lapdyadic} and~\ref{cor:heatdyadic}, solutions with
prescribed boundary data of suitable regularity may be written as series of
convolutions of the boundary function with the individual dyadic summands of the
representations above; see~\cite{FollandPDE}.
\end{remark}

\section{Spectral measures, density of states, and zeta functions}
\label{sec:spectral}

From classical spectral theory \cite{LangFunc}, for $A$ self-adjoint the action
of the resolvent is recovered as an integral over $\sigma(A)\subset\R$ against a
family of measures $\mu_{\varphi,\psi}(\,\cdot\,)=\ip{E_A(\,\cdot\,)\varphi}{\psi}$,
$\varphi,\psi\in\Hil$, where $E_A$ is the projection-valued spectral measure of
$A$; thus $E_A(I)$ is the orthogonal projection onto the spectral subspace of $A$
over a Borel set $I$. Extending each
$\mu_{\varphi,\psi}$ to all of $\R$ by setting it to zero on Borel subsets of
$\R\setminus\sigma(A)$,
\begin{equation}\label{eq:resint}
\ip{R_A(i\lambda)\varphi}{\psi}
=\int_{\R}\frac{1}{t-i\lambda}\,d\mu_{\varphi,\psi}(t).
\end{equation}
The diagonal measures $\mu_{\psi,\psi}$ are positive, and the resolvent
determines them, through the following classical inversion formula, as linear
functionals on $C_c(\R)$.

\begin{theorem}[{\cite{LangFunc}}]\label{thm:stone}
Let $A$ be a self-adjoint operator on a Hilbert space $\Hil$, let $\psi\in\Hil$,
and let $\mu_{\psi,\psi}$ be the positive spectral measure corresponding to
$\psi$. With $R_A(z)=(A-zI)^{-1}$ for $z\notin\R$, every $f\in C_c(\R)$
satisfies
\begin{equation}\label{eq:stone}
\int_\R f(\lambda)\,d\mu_{\psi,\psi}(\lambda)
=\frac{1}{2\pi i}\lim_{\varepsilon\to0^+}\int_\R
\bigl\langle[R_A(\lambda+i\varepsilon)-R_A(\lambda-i\varepsilon)]\psi,\psi\bigr\rangle
\,f(\lambda)\,d\lambda.
\end{equation}
\end{theorem}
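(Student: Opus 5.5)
We prove the Stone formula \eqref{eq:stone} by reducing the operator statement to a scalar one through the spectral representation \eqref{eq:resint}, and then evaluating the scalar limit with the Poisson kernel.

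\textbf{Step 1: reduce to a Poisson integral.} For $z=\lambda\pm i\varepsilon$ the spectral theorem gives $\ip{R_A(z)\psi}{\psi}=\int_\R (t-z)^{-1}\,d\mu_{\psi,\psi}(t)$, where $\mu=\mu_{\psi,\psi}$ is a positive finite measure of total mass $\|\psi\|^2$. Subtracting the two values,
\[
\frac{1}{t-\lambda-i\varepsilon}-\frac{1}{t-\lambda+i\varepsilon}=\frac{2i\varepsilon}{(t-\lambda)^2+\varepsilon^2},
\]
so the bracket in \eqref{eq:stone}, divided by $2\pi i$, equals $\int_\R P_\varepsilon(t-\lambda)\,d\mu(t)$. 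Here $P_\varepsilon(s)=\pi^{-1}\varepsilon/(s^2+\varepsilon^2)$ is the Poisson kernel, which satisfies $P_\varepsilon\ge0$ and $\int P_\varepsilon=1$.

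\textbf{Step 2: apply Fubini.} Fix $f\in C_c(\R)$. The function $(t,\lambda)\mapsto P_\varepsilon(t-\lambda)|f(\lambda)|$ is integrable against $d\mu(t)\,d\lambda$, since $\mu$ is finite, $f$ is bounded with compact support, and $P_\varepsilon\le(\pi\varepsilon)^{-1}$. Fubini therefore converts the right side of \eqref{eq:stone}, before the limit is taken, into
\[
\int_\R (P_\varepsilon*f)(t)\,d\mu(t).
\]

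\textbf{Step 3: pass to the limit, the main point.} We need $P_\varepsilon*f\to f$ pointwise, together with a uniform bound that allows dominated convergence. Because $f$ is uniformly continuous and bounded, the standard approximate-identity argument gives uniform convergence. Split $\int P_\varepsilon(s)\,[f(t-s)-f(t)]\,ds$ into the region $|s|<\delta$, which is small by uniform continuity, and the region $|s|\ge\delta$. The second region is bounded by $2\|f\|_\infty\int_{|s|\ge\delta}P_\varepsilon\to0$, since $\int_{|s|\ge\delta}P_\varepsilon=1-\tfrac{2}{\pi}\arctan(\delta/\varepsilon)$.

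The limit then passes under $\int d\mu$ in either of two ways:
\begin{itemize}
\item by uniform convergence, since $\mu(\R)<\infty$;
\item by dominated convergence, using the bound $\|P_\varepsilon*f\|_\infty\le\|f\|_\infty$.
\end{itemize}
This yields $\int f\,d\mu_{\psi,\psi}$ and proves \eqref{eq:stone}.

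Nothing here needs the dyadic machinery. The only care required is the Fubini justification in Step 2 and the tail estimate for $P_\varepsilon$ in Step 3.
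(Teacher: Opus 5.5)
Your proof is correct and complete. The resolvent-difference computation yields the Poisson kernel with the right sign and normalization. The Fubini step is justified, since the integrand is bounded by $(\pi\varepsilon)^{-1}|f(\lambda)|$ against a finite product measure. And $P_\varepsilon*f\to f$ uniformly for bounded, uniformly continuous $f$, which suffices because $\mu_{\psi,\psi}(\R)=\|\psi\|^2<\infty$.

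The paper gives no proof of Theorem~\ref{thm:stone}; it imports the formula from Lang. The closest argument in the paper is Step~1 of Proposition~\ref{prop:laprecovery}. There the same Poisson-kernel identity is used, but $P_\varepsilon$ is convolved with the spectral \emph{density} $g$, and the step relies on $P_\varepsilon*g\to g$ uniformly. That requires $\mu_{\psi,\psi}$ to be absolutely continuous with a continuous, compactly supported density, which is available there only because of the hypotheses on $\widehat\psi$ (or, in Proposition~\ref{prop:schrod}, the limiting absorption principle).

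Your Fubini swap moves the convolution onto the test function $f$ instead. This is precisely what removes every hypothesis on $\mu_{\psi,\psi}$ beyond finiteness, so atoms and singular continuous parts are handled with no extra work. That generality is exactly what the theorem as stated, for arbitrary self-adjoint $A$ and arbitrary $\psi$, requires.
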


The dyadic representation of \S\ref{sec:zak} furnishes the resolvent on the
imaginary axis, whereas \eqref{eq:stone} samples it near the real axis; the two
are linked by a shift. For $a\in\R$ the operator $A-aI$ is self-adjoint with
unitary group $e^{-it(A-aI)}=e^{ita}U_t$, so applying
Proposition~\ref{prop:dyadic} to $A-aI$ at the point $ib$, $b>0$, gives
\begin{equation}\label{eq:shifted}
R_A(a+ib)
= i\bigl(1-e^{-b}e^{ia}U_1\bigr)^{-1}
- i\sum_{k=1}^{\infty}2^{-k}\bigl(1+e^{-b/2^{k}}e^{ia2^{-k}}U_{2^{-k}}\bigr)^{-1},
\end{equation}
the geometric series converging in the strong operator topology because
$\|e^{-b}e^{ia}U_1\|=e^{-b}<1$. Pairing \eqref{eq:shifted} against $\psi$ and
recognizing the terms as Zak transforms (Proposition~\ref{prop:zakcoeff})
renders $\ip{R_A(a+ib)\psi}{\psi}$ explicit. Since $R_A(\bar z)=R_A(z)^*$, the
diagonal coefficient obeys
$\ip{R_A(\lambda-i\varepsilon)\psi}{\psi}
=\overline{\ip{R_A(\lambda+i\varepsilon)\psi}{\psi}}$,
so the bracket in \eqref{eq:stone} equals
$2i\operatorname{Im}\ip{R_A(\lambda+i\varepsilon)\psi}{\psi}$ and only the
upper half-plane representation \eqref{eq:shifted}, with $a=\lambda$ and
$b=\varepsilon$, is needed:
\begin{equation}\label{eq:stoneim}
\int_\R f\,d\mu_{\psi,\psi}
=\frac{1}{\pi}\lim_{\varepsilon\to0^+}\int_\R
\operatorname{Im}\ip{R_A(\lambda+i\varepsilon)\psi}{\psi}\,f(\lambda)\,d\lambda .
\end{equation}

\eqref{eq:stoneim} is a candidate scheme; its analytical content is the boundary
limit $\varepsilon\to0^{+}$, in which the damping factors $e^{-\varepsilon2^{-k}}$
tend to $1$ and the series in \eqref{eq:shifted} lose absolute convergence. We now
carry this out for $A=-\Delta$. The mechanism is that the matrix coefficient
reduces to a one-dimensional Cauchy transform of a smooth, compactly supported
\emph{spectral density}, after which the dyadic series splits into a geometric
tail and a finite head governed by the Sokhotski--Plemelj theorem.

\begin{lemma}[Spectral density of $-\Delta$]\label{lem:density}
Let $A=-\Delta$ be the free Laplacian, self-adjoint on $H^2(\R^n)\subset L^2(\R^n)$, and let $\psi$ satisfy
$\widehat\psi\in C^1_c(\R^n\setminus\{0\})$. Then $\mu_{\psi,\psi}$ is absolutely
continuous, $d\mu_{\psi,\psi}(\lambda)=g(\lambda)\,d\lambda$, where the density
\begin{equation}\label{eq:density}
g(E)=\tfrac12\,E^{\,n/2-1}\!\int_{\mathbb{S}^{n-1}}\bigl|\widehat\psi(\sqrt E\,\omega)\bigr|^2\,d\omega
\qquad(E>0)
\end{equation}
satisfies $g\in C^1_c((0,\infty))$ with $\operatorname{supp}g\subset[m_0,M_0]\subset(0,\infty)$.
Moreover, for $\varepsilon>0$,
\begin{equation}\label{eq:cauchy}
\ip{R_A(\lambda+i\varepsilon)\psi}{\psi}
=\int_\R\frac{g(E)}{E-\lambda-i\varepsilon}\,dE=:G(\lambda+i\varepsilon).
\end{equation}
\end{lemma}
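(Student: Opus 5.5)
The plan is to work entirely on the Fourier side, using Proposition~\ref{prop:selfadj}: by Plancherel, $\Fou$ conjugates $A=-\Delta$ to multiplication by $|\xi|^2$. By the Borel functional calculus, for every bounded Borel function $h$ on $\R$ we then have $\ip{h(A)\psi}{\psi}=\int_{\R^n}h(|\xi|^2)|\widehat\psi(\xi)|^2\,d\xi$. Since this determines the spectral measure, $\mu_{\psi,\psi}$ is the pushforward of $|\widehat\psi(\xi)|^2\,d\xi$ under the map $\xi\mapsto|\xi|^2$. The whole lemma then reduces to computing this pushforward explicitly.

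To do so, pass to polar coordinates $\xi=r\omega$ with $r>0$ and $\omega\in\mathbb S^{n-1}$, so that $d\xi=r^{n-1}\,dr\,d\omega$. Then substitute $E=r^2$, which gives $dr=\tfrac12E^{-1/2}\,dE$ and $r^{n-1}=E^{(n-1)/2}$. This yields
\[
\int_{\R^n}h(|\xi|^2)|\widehat\psi(\xi)|^2\,d\xi=\int_0^\infty h(E)\,\tfrac12E^{n/2-1}\int_{\mathbb S^{n-1}}|\widehat\psi(\sqrt E\,\omega)|^2\,d\omega\,dE ,
\]
which is exactly \eqref{eq:density}. The origin $\xi=0$ has measure zero, and in any case $\widehat\psi$ vanishes near $0$. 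Taking $h=\indf{B}$ for Borel sets $B$ shows that $\mu_{\psi,\psi}$ is absolutely continuous with density $g$, extended by zero for $E\le0$.

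For the support and regularity, take $\operatorname{supp}\widehat\psi\subset\{m\le|\xi|\le M\}$ with $0<m<M$; such bounds exist because the support is a compact subset of $\R^n\setminus\{0\}$. Then $g$ vanishes outside $[m^2,M^2]=:[m_0,M_0]$. For $C^1$ regularity, note that $E^{n/2-1}$ is smooth on $(0,\infty)$. The function $(E,\omega)\mapsto|\widehat\psi(\sqrt E\omega)|^2$ is $C^1$ on $(0,\infty)\times\mathbb S^{n-1}$, because $|\widehat\psi|^2=\widehat\psi\,\overline{\widehat\psi}$ is $C^1$ and $E\mapsto\sqrt E$ is smooth for $E>0$. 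Its $E$-derivative, $\tfrac{1}{2\sqrt E}\,\omega\cdot\nabla|\widehat\psi|^2(\sqrt E\omega)$, is continuous and bounded on $[m_0,M_0]\times\mathbb S^{n-1}$. Differentiation under the compact integral over the sphere is therefore justified, and $g\in C^1_c((0,\infty))$. I expect this routine differentiability check to be the only step needing any care.

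Finally, \eqref{eq:cauchy} follows from the same pushforward identity. For $\varepsilon>0$ the function $h(E)=(E-\lambda-i\varepsilon)^{-1}$ is bounded and continuous on $\R$, with $|h|\le1/\varepsilon$, and $h(A)=R_A(\lambda+i\varepsilon)$ by the functional calculus. Hence
\[
\ip{R_A(\lambda+i\varepsilon)\psi}{\psi}=\int_\R h\,d\mu_{\psi,\psi}=\int_\R\frac{g(E)}{E-\lambda-i\varepsilon}\,dE ,
\]
which is equivalently \eqref{eq:resint} with $\mu_{\psi,\psi}=g\,dE$ and the point $i\lambda$ there replaced by $\lambda+i\varepsilon$.
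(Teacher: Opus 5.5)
Your proposal is correct and follows essentially the same route as the paper: Fourier conjugation gives $\ip{h(A)\psi}{\psi}=\int_{\R^n}h(|\xi|^2)|\widehat\psi(\xi)|^2\,d\xi$, polar coordinates with $E=r^2$ then identify $g$, and the choice $h(E)=(E-\lambda-i\varepsilon)^{-1}$ yields the Cauchy representation. Your explicit support bound $[m^2,M^2]$ and the differentiation-under-the-integral check for $g\in C^1$ only spell out what the paper asserts in one line.
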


\begin{proof}
By \eqref{eq:multiplier}, $\Fou$ conjugates $A$ into multiplication by $|\xi|^2$,
and unitary conjugation intertwines the Borel functional calculus, so
$\Fou\,h(A)\,\Fou^{-1}=M_{h(|\xi|^2)}$ for bounded Borel $h$. Since $\Fou$ is
unitary on $L^2(\R^n)$, Plancherel applies to both arguments of the inner product
and
\[
\ip{h(A)\psi}{\psi}
=\ip{\Fou h(A)\Fou^{-1}\widehat\psi}{\widehat\psi}
=\ip{M_{h(|\xi|^2)}\widehat\psi}{\widehat\psi}
=\int_{\R^n}h(|\xi|^2)\,|\widehat\psi(\xi)|^2\,d\xi.
\]
Polar
coordinates $\xi=r\omega$ and the substitution $E=r^2$ (so
$r^{n-1}\,dr=\tfrac12 E^{\,n/2-1}dE$) give
$\ip{h(A)\psi}{\psi}=\int_0^\infty h(E)g(E)\,dE$ with $g$ as in
\eqref{eq:density}, identifying $d\mu_{\psi,\psi}=g\,d\lambda$. Since
$|\widehat\psi|^2\in C^1_c(\R^n\setminus\{0\})$ and $E\mapsto\sqrt E$ is smooth
on $(0,\infty)$, $g\in C^1_c((0,\infty))$. Taking
$h(E)=(E-\lambda-i\varepsilon)^{-1}$ gives \eqref{eq:cauchy}.
\end{proof}

Specializing \eqref{eq:shifted} to $a=\lambda$, $b=\varepsilon$ and pairing
against $\psi$ gives, by Lemma~\ref{lem:density},
$G(\lambda+i\varepsilon)=T_0(\lambda,\varepsilon)-\sum_{k\ge1}T_k(\lambda,\varepsilon)$,
where, with $s=s(E)=\varepsilon+i(E-\lambda)$,
\begin{equation}\label{eq:Tk}
T_k(\lambda,\varepsilon)=i\,2^{-k}\!\int_\R\frac{g(E)}{1+e^{-s/2^{k}}}\,dE
\quad(k\ge1),
\qquad
T_0(\lambda,\varepsilon)=i\!\int_\R\frac{g(E)}{1-e^{-s}}\,dE .
\end{equation}

\begin{proposition}\label{prop:laprecovery}
Let $A=-\Delta$ and $\psi$ with $\widehat\psi\in C^1_c(\R^n\setminus\{0\})$, of
spectral density $g$. For each $k\ge0$ and every real-valued $f\in C^1_c((0,\infty))$ the
boundary functional
\begin{equation}\label{eq:Jk}
J_k(f):=\frac1\pi\lim_{\varepsilon\to0^+}\int_\R \operatorname{Im}T_k(\lambda,\varepsilon)\,f(\lambda)\,d\lambda
\end{equation}
exists, and
\begin{equation}\label{eq:recovery}
\int_\R f\,d\mu_{\psi,\psi}=J_0(f)-\sum_{k=1}^{\infty}J_k(f),
\end{equation}
the series converging absolutely. Thus the spectral measure of $-\Delta$ is
recovered from the dyadic resolvent data \eqref{eq:shifted}.
\end{proposition}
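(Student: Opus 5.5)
We fix $f\in C^1_c((0,\infty))$ real-valued and treat the two ranges of $k$ separately, following the head/tail mechanism announced before Lemma~\ref{lem:density}. The underlying identity is the pointwise decomposition $G=T_0-\sum_{k\ge1}T_k$ for each $\varepsilon>0$; the series converges absolutely at fixed $\varepsilon$ because $|1+e^{-s/2^k}|\ge 1-e^{-\varepsilon/2^k}>0$. The task is to move $\lim_{\varepsilon\to0^+}$ and $\int f\,d\lambda$ through the $k$-sum, and then to use \eqref{eq:stoneim} together with Lemma~\ref{lem:density} to identify the left-hand side of \eqref{eq:recovery}.

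\emph{Existence of each $J_k$.} For each fixed $k$ we write $T_k$ as $\int g(E)\,\Phi_k(\varepsilon+i(E-\lambda))\,dE$, where $\Phi_0(s)=i(1-e^{-s})^{-1}$ and $\Phi_k(s)=i2^{-k}(1+e^{-s/2^k})^{-1}$. Each $\Phi_k$ is meromorphic with simple poles on the imaginary $s$-axis, at $s=2\pi i m$ for $k=0$ and at $s=2^k\pi i(2m+1)$ for $k\ge1$. Near a pole $s_*=i\sigma$ we split $\Phi_k(s)=c/(s-s_*)+h(s)$ with $h$ holomorphic. Writing $s-s_*=\varepsilon+i(E-\lambda-\sigma)=i(E-\lambda-\sigma-i\varepsilon)$, the singular part is a Cauchy integral $-ic\int g(E)/(E-(\lambda+\sigma)-i\varepsilon)\,dE$. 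Since $g\in C^1_c$, Sokhotski--Plemelj gives a limit as $\varepsilon\to0^+$ that is continuous in $\lambda$, and this limit is uniformly bounded once one uses a standard $C^1$ estimate on the principal value. The regular part converges uniformly. Only finitely many poles lie within distance $\sup|\lambda|+M_0$ of the origin, so with $\lambda\in\operatorname{supp}f$ and $E\in[m_0,M_0]$ the remaining poles are far away and $\Phi_k$ is smooth there. Dominated convergence in $\lambda$ then yields $J_k(f)$. Explicitly, $J_k(f)$ equals the regular part evaluated at $\varepsilon=0$, plus the residues paired with $\pi g(\lambda+\sigma)$ and with the principal value.

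\emph{Summability, which is the main obstacle.} For $k\ge K$, with $2^K\pi>2(\sup_{\operatorname{supp}f}|\lambda|+M_0)$, every pole of $\Phi_k$ lies outside the region $\{|s|\le 2^{k}\pi/2\}$ containing all relevant $s$. On that region $|1+e^{-s/2^k}|$ is bounded below by an absolute constant, uniformly in $\varepsilon\in(0,1]$. Hence $\varepsilon\to0$ may be taken inside and $J_k(f)=\frac1\pi\iint\operatorname{Im}\Phi_k(i(E-\lambda))g(E)f(\lambda)\,dE\,d\lambda$. Expanding $(1+e^{-i\theta})^{-1}=\tfrac12+\tfrac{i}{2}\tan(\theta/2)$ with $\theta=(E-\lambda)2^{-k}$ gives $\operatorname{Im}\Phi_k=2^{-k}\cdot\tfrac12$, plus a term $O(2^{-2k})$ coming from the real odd part. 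The leading constant term is $O(2^{-k})$, so $|J_k(f)|\le C2^{-k}\|g\|_1\|f\|_1$ and the tail is geometrically summable. This absolute convergence of $\sum J_k$ is what we expect to require the most care: the estimates must be uniform in $\varepsilon$ so that the limit can be exchanged with the infinite sum, as Remark~\ref{rem:absconv} warns.

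\emph{Identification.} For fixed $\varepsilon\in(0,1]$ we have $\frac1\pi\int\operatorname{Im}G f=\frac1\pi\int\operatorname{Im}T_0 f-\sum_{k\ge1}\frac1\pi\int\operatorname{Im}T_kf$. The tail bound of the previous step holds uniformly in $\varepsilon$, so Tannery's theorem lets us pass $\varepsilon\to0^+$ termwise, with the finitely many head terms handled by the first step. The left side tends to $\int f\,d\mu_{\psi,\psi}$ by \eqref{eq:stoneim}; directly, Plemelj applied to \eqref{eq:cauchy} gives $\operatorname{Im}G\to\pi g$. This yields \eqref{eq:recovery}.
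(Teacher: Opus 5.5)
Your argument is correct and follows the same plan as the paper's proof. That plan is: the Stone limit for $\operatorname{Im}G$; a lower bound on $|1+e^{-s/2^k}|$ past a threshold $K$, giving the $\varepsilon$-uniform bound \eqref{eq:tailbound}; Sokhotski--Plemelj for the finitely many head terms; and a Tannery-type termwise limit. The real difference is in the head terms. You subtract the finitely many principal parts of $\Phi_k$ on the relevant compact $s$-region and apply Plemelj in the spectral variable $E$, using $g\in C^1_c$. This makes $T_k(\lambda,\varepsilon)$ itself converge uniformly in $\lambda$, so $f$ needs no differentiability. The paper instead uses Fubini and applies Plemelj in $\lambda$, relying on the Dini regularity of $f\in C^1_c$ and using $g$ only through $\|g\|_{L^1}$. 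That choice gives head bounds $|J_k(f)|\le\pi^{-1}2^{-k}C_k\|g\|_{L^1}$, linear in $\|g\|_{L^1}$. Those bounds are what the density argument of Corollary~\ref{cor:density-ext} and the verbatim reuse in Proposition~\ref{prop:schrod} (where $\rho_\psi$ is only continuous) depend on; your constants involve $\|g\|_{C^1}$ and would not transfer. There are two minor slips. First, the bound $|1+e^{-s/2^k}|\ge 1-e^{-\varepsilon/2^k}$ on its own only gives $|T_k|\lesssim\|g\|_{L^1}/\varepsilon$, which is not summable in $k$; absolute convergence at fixed $\varepsilon$ actually comes from your later bound for $k\ge K$. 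Second, for $k\ge K$ one has $\operatorname{Im}\Phi_k(i(E-\lambda))=2^{-k-1}$ exactly, since the $\tan$ term is real. Hence $J_k(f)=2^{-k-1}\pi^{-1}\|g\|_{L^1}\int_\R f\,d\lambda$, with no $O(2^{-2k})$ correction.
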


\begin{proof}
Throughout $g\ge0$ is real and $P_\varepsilon(x)=\tfrac1\pi\varepsilon/(x^2+\varepsilon^2)$
is the Poisson kernel.

\emph{Step 1 (Stone limit).} By \eqref{eq:cauchy},
$\tfrac1\pi\operatorname{Im}G(\lambda+i\varepsilon)
=\tfrac1\pi\int_\R \varepsilon\,g(E)\,[(E-\lambda)^2+\varepsilon^2]^{-1}dE
=(P_\varepsilon*g)(\lambda)$. As $g\in C_c$ is continuous of compact support,
$P_\varepsilon*g\to g$ uniformly, so
\begin{equation}\label{eq:stonelimit}
\lim_{\varepsilon\to0^+}\frac1\pi\int_\R\operatorname{Im}G(\lambda+i\varepsilon)\,f\,d\lambda
=\int_\R f\,g\,d\lambda=\int_\R f\,d\mu_{\psi,\psi}.
\end{equation}

\emph{Step 2 (uniform tail bound).} Put
$R=\sup\{|E-\lambda|:E\in\operatorname{supp}g,\,\lambda\in\operatorname{supp}f\}$
and $K=\max\{0,\lceil\log_2(2R/\pi)\rceil\}$. For $k>K$ one has
$2^k>2R/\pi$, so for all admissible $E,\lambda$,
$|\!\operatorname{Im}(s/2^k)|=|E-\lambda|/2^k<\pi/2$ and
$\operatorname{Re}(s/2^k)=\varepsilon/2^k\ge0$; hence $w=e^{-s/2^k}$ has
$\arg w\in(-\tfrac\pi2,\tfrac\pi2)$, so $\operatorname{Re}w>0$ and
$|1+w|^2=1+2\operatorname{Re}w+|w|^2\ge1$. Therefore
\begin{equation}\label{eq:tailbound}
|T_k(\lambda,\varepsilon)|\le 2^{-k}\|g\|_{L^1}
\qquad(k>K,\ \varepsilon\ge0,\ \lambda\in\operatorname{supp}f).
\end{equation}
For each fixed $\varepsilon>0$ the head denominators do not vanish, so
$\sum_{k\ge0}|T_k|<\infty$, the representation
$G=T_0-\sum_{k\ge1}T_k$ converges absolutely, and $\operatorname{Im}$ and
integration against $f$ act termwise:
\begin{equation}\label{eq:termwise}
\int_\R\operatorname{Im}G(\lambda+i\varepsilon)f\,d\lambda
=\int_\R\operatorname{Im}T_0\,f\,d\lambda-\sum_{k=1}^\infty\int_\R\operatorname{Im}T_k\,f\,d\lambda
\quad(\varepsilon>0).
\end{equation}

\emph{Step 3 (existence of the head limits).} Fix $1\le k\le K$. By Fubini
(valid for $\varepsilon>0$, the denominator being bounded below and $f,g\in L^1$),
\[
\int_\R f(\lambda)T_k(\lambda,\varepsilon)\,d\lambda
=i\,2^{-k}\!\int_\R g(E)\,I_k(E,\varepsilon)\,dE,
\qquad
I_k(E,\varepsilon)=\int_\R\frac{f(\lambda)}{1+e^{-s/2^{k}}}\,d\lambda .
\]
For fixed $E$ the integrand of $I_k$ is meromorphic in $\lambda$ with simple poles
at $\lambda=\lambda_*^{(m)}-i\varepsilon$, where
$\lambda_*^{(m)}=E+2^{k}\pi(2m+1)$ ($m\in\Z$), lying just below $\R$; only finitely
many lie in $\operatorname{supp}f$, and none does when $k>K$. At such a pole the
denominator $1+e^{-s/2^{k}}$ has $\lambda$-derivative $e^{-s/2^k}\,i2^{-k}=-i2^{-k}$,
so $(1+e^{-s/2^{k}})^{-1}$ has residue $i2^{k}$ there; thus near $\lambda_*^{(m)}$
the integrand equals
$i2^{k}f(\lambda)(\lambda-\lambda_*^{(m)}+i\varepsilon)^{-1}$ plus a remainder
bounded uniformly in $\varepsilon\in(0,1]$. As $f\in C^1_c$ satisfies a Dini
condition, Sokhotski--Plemelj gives
\begin{equation}\label{eq:Ikplemelj}
I_k(E,\varepsilon)\xrightarrow[\varepsilon\to0^+]{}
\operatorname{P.V.}\!\int_\R\frac{f(\lambda)}{1+e^{-i(E-\lambda)/2^{k}}}\,d\lambda
+2^{k}\pi\!\!\sum_{m:\,\lambda_*^{(m)}\in\operatorname{supp}f}\!\! f\bigl(\lambda_*^{(m)}\bigr),
\end{equation}
the residue sum being empty for $k>K$; here the delta term of each simple pole is
$i2^{k}\cdot(-i\pi)f(\lambda_*^{(m)})=2^{k}\pi f(\lambda_*^{(m)})$.

For $k=0$ the same argument applies to
$\int_\R f\,T_0\,d\lambda=i\int_\R g(E)\,I_0(E,\varepsilon)\,dE$ with
$I_0(E,\varepsilon)=\int_\R f(\lambda)(1-e^{-s})^{-1}\,d\lambda$: here the poles are
$\lambda_*^{(m)}=E-2\pi m$, the $\lambda$-derivative of $1-e^{-s}$ at a pole is
$-i$, so $(1-e^{-s})^{-1}$ has residue $i$, and the weight $\pi$ replaces
$2^{k}\pi$ in \eqref{eq:Ikplemelj}. The pole $m=0$, at $\lambda_*=E$, occurs for
every $E\in\operatorname{supp}g$ and contributes
$i\cdot i\cdot(-i\pi)f(E)=i\pi f(E)$ to $T_0$; its imaginary part reproduces
$\pi\!\int f g$ exactly, so the whole Stone limit \eqref{eq:stonelimit} already
sits in this single residue, and \eqref{eq:recovery} is equivalent to the
statement that the principal values together with the residues from $k\ge1$
cancel in the imaginary part.

In all cases
$\sup_{\varepsilon\in(0,1]}|I_k(E,\varepsilon)|\le C_k$ uniformly in $E$, with $C_k$
depending only on $k$, $\|f\|_{C^1}$ and $|\operatorname{supp}f|$. Dominated
convergence in $E$ (majorant $C_k|g|\in L^1$) shows
$\lim_{\varepsilon\to0^+}\int fT_k\,d\lambda$ exists; as $f$ is real, $J_k(f)$ of
\eqref{eq:Jk} is well defined for $0\le k\le K$.

\emph{Step 4 (interchange and conclusion).} For $k>K$ the bound
\eqref{eq:tailbound} and continuity of the integrand at $\varepsilon=0$ give
$\int\operatorname{Im}T_k\,f\,d\lambda\to\pi J_k(f)$ with
$|\!\int\operatorname{Im}T_k f\,d\lambda|\le 2^{-k}\|f\|_{L^1}\|g\|_{L^1}$
uniformly in $\varepsilon\in(0,1]$; hence the tail of \eqref{eq:termwise}
converges uniformly in $\varepsilon$ and
$\lim_{\varepsilon\to0^+}\sum_{k>K}\int\operatorname{Im}T_k f
=\sum_{k>K}\pi J_k(f)$, the series converging absolutely. The head
$0\le k\le K$ is finite, and the limit passes through it by Step~3. With
\eqref{eq:stonelimit} and \eqref{eq:termwise},
\[
\pi\!\int_\R f\,d\mu_{\psi,\psi}
=\lim_{\varepsilon\to0^+}\int_\R\operatorname{Im}G(\lambda+i\varepsilon)f\,d\lambda
=\pi J_0(f)-\sum_{k=1}^\infty\pi J_k(f),
\]
which is \eqref{eq:recovery}.
\end{proof}

\begin{remark}
Two hypotheses drive the argument: $\operatorname{supp}\widehat\psi$ away from the
origin (so that $g\in C^1_c$ avoids the threshold weight $E^{\,n/2-1}$ at $E=0$)
and $f$ supported in $(0,\infty)$. On the physical side the terms \eqref{eq:Tk}
are oscillatory integrals in the free propagator: expanding the resolvent in
\eqref{eq:shifted},
\[
T_k=i\,2^{-k}\sum_{j\ge0}(-1)^j e^{-j\varepsilon/2^k}e^{ij\lambda/2^k}\ip{U_{j2^{-k}}\psi}{\psi},
\]
an Abel-summed series weighted by $c(t)=\ip{U_t\psi}{\psi}$. For $\widehat\psi$
supported away from $0$ the phase $|\xi|^2$ is nonstationary on
$\operatorname{supp}\widehat\psi$, so $c$ decays rapidly and the $j$-series
converges absolutely in every dimension; the general dispersive bound
$|c(t)|\le(4\pi|t|)^{-n/2}\|\psi\|_{L^1}^2$ governs the larger class
$\psi\in L^1\cap L^2$, where absolute convergence of the $j$-series requires
$n\ge3$ while $n=1,2$ are only conditional.
\end{remark}

\begin{remark}
Proposition~\ref{prop:laplacian} and the computation above are two representations
of one coefficient $\ip{R_A(i\lambda)\psi}{\psi}$: pairing \eqref{eq:laplacian}
against $\psi$ builds it from the free propagator $P_0$ in physical space, whereas
Lemma~\ref{lem:density} and Proposition~\ref{prop:laprecovery} build it from the
spectral density $g$ in frequency space. The two agree by Plancherel, since $\Fou$
turns each convolution $P_0(\,\cdot\,;t)\ast$ into multiplication by the propagator
symbol $e^{-it|\xi|^2}$; the physical-space series then pairs against $\psi$ as a
series of integrals against $|\widehat\psi|^2$, the same object that defines $g$ in
\eqref{eq:density}, and resums to the Cauchy transform of $g$.
\end{remark}

The free density in \eqref{eq:density} is explicit, so Proposition~\ref{prop:laprecovery}
is a proof of concept rather than new spectral information. Two extensions remove
this limitation: relaxing the regularity of $\psi$, and replacing $-\Delta$ by a
Schr\"odinger operator whose density is not known in closed form.

\begin{corollary}\label{cor:density-ext}
Fix a real-valued $f\in C^1_c((0,\infty))$ and a closed annulus
$\mathcal A=\{a\le|\xi|\le b\}$ with $0<a<b$. Then \eqref{eq:recovery} holds for
every $\psi$ with $\widehat\psi\in L^2(\R^n)$ and
$\operatorname{supp}\widehat\psi\subset\mathcal A$.
\end{corollary}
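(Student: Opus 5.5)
The plan is to rerun the proof of Proposition~\ref{prop:laprecovery} and check where the hypothesis $\widehat\psi\in C^1_c(\R^n\setminus\{0\})$ was actually used. We will find that only two properties of the density $g$ are needed. First, $g\in L^1$ with $\operatorname{supp}g\subset[a^2,b^2]$ compact in $(0,\infty)$. Second, the Stone limit \eqref{eq:stonelimit} holds against the fixed test function $f$. The regularity that the Sokhotski--Plemelj step requires always sits on $f$, never on $g$.

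\textbf{Step 1: the density.} For $\widehat\psi\in L^2$ supported in $\mathcal A$, the computation in Lemma~\ref{lem:density} goes through verbatim. That computation uses only Plancherel, the functional calculus and polar coordinates. It yields $d\mu_{\psi,\psi}=g\,dE$, with $g$ given by \eqref{eq:density}. Here $g\ge0$ and $g\in L^1$, since $\int g=\|\widehat\psi\|_2^2$, and $\operatorname{supp}g\subset[a^2,b^2]$. The Cauchy representation \eqref{eq:cauchy} also holds, because $h(E)=(E-\lambda-i\varepsilon)^{-1}$ is bounded. Continuity of $g$ is lost, so Step~1 of Proposition~\ref{prop:laprecovery} must be redone.

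\textbf{Step 2: the Stone limit.} We use Fubini and the symmetry of the Poisson kernel to write
\[
\tfrac1\pi\int\operatorname{Im}G(\lambda+i\varepsilon)f(\lambda)\,d\lambda=\int g\,(P_\varepsilon*f)\,dE .
\]
Since $f\in C_c$, we have $P_\varepsilon*f\to f$ uniformly. Since $g\in L^1$, the right side converges to $\int fg=\int f\,d\mu_{\psi,\psi}$. This replaces \eqref{eq:stonelimit}.

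\textbf{Step 3: the head and tail.} The constants $R$ and $K$ of Step~2 in Proposition~\ref{prop:laprecovery} depend only on $\operatorname{supp}g\subset[a^2,b^2]$ and $\operatorname{supp}f$. Hence the tail bound \eqref{eq:tailbound} holds with $\|g\|_{L^1}$, and \eqref{eq:termwise} holds for each $\varepsilon>0$. The head limits of Step~3 were obtained after Fubini from the bound $\sup_{\varepsilon\in(0,1]}|I_k(E,\varepsilon)|\le C_k$, uniform in $E$, together with pointwise convergence of $I_k(E,\varepsilon)$ via \eqref{eq:Ikplemelj}. Both facts involve only $f\in C^1_c$. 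Dominated convergence with majorant $C_k g\in L^1$ then gives existence of $J_k(f)$ for $0\le k\le K$. Step~4 carries over unchanged and yields \eqref{eq:recovery} with absolute convergence.

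\textbf{Main obstacle.} The main point to check is the uniformity claim $|I_k(E,\varepsilon)|\le C_k$ for all $E$ in the compact set $[a^2,b^2]$. The poles $\lambda_*^{(m)}-i\varepsilon$ move with $E$ and may approach the edge of $\operatorname{supp}f$. We would handle this by the standard splitting near each pole:
\[
f(\lambda)(\lambda-\lambda_*+i\varepsilon)^{-1}=\bigl(f(\lambda)-f(\lambda_*)\bigr)(\lambda-\lambda_*+i\varepsilon)^{-1}+f(\lambda_*)(\lambda-\lambda_*+i\varepsilon)^{-1}.
\]
The first piece is bounded by $\|f'\|_\infty$ times a length. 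The second piece is a truncated Cauchy integral over a fixed window, which is bounded uniformly in $\varepsilon$ and in the pole location. This gives $C_k$ depending only on $k$, $\|f\|_{C^1}$, the supports, and the finitely many poles that can meet $\operatorname{supp}f$, and that count is uniform in $E\in[a^2,b^2]$.
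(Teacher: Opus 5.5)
Your argument is correct, but it takes a different route from the paper.

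The paper never reopens the proof of Proposition~\ref{prop:laprecovery}. It uses that proposition as a black box on the dense set $\widehat\psi\in C^1_c(\mathcal A^\circ)$ and extends by continuity. It checks that both sides of \eqref{eq:recovery} are continuous in $\widehat\psi\in L^2(\mathcal A)$: the left side because $|\ip{f(A)\psi}{\psi}|\le\|f\|_\infty\|\psi\|^2$, the right side because it is a bounded linear functional of $g\in L^1$ (tail by \eqref{eq:tailbound}, head by $|J_k(f)|\le\pi^{-1}2^{-k}C_k\|g\|_{L^1}$) composed with $\widehat\psi\mapsto g$, which is continuous from $L^2$ to $L^1$. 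The annulus keeps $K$ fixed along the approximating sequence.

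You instead isolate the single place where continuity of $g$ entered, namely the uniform convergence $P_\varepsilon*g\to g$ in Step~1. You remove it by duality, moving the Poisson kernel onto $f$, and Steps~2--4 then go through verbatim for $g\in L^1$.

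Both routes rest on the same uniform bound $\sup_{\varepsilon\in(0,1]}|I_k(E,\varepsilon)|\le C_k$. The paper needs it, together with Step~3's dominated-convergence argument for $L^1$ densities, just to know that $J_k(f)$ exists and is bounded for the non-smooth $g$ reached in the limit. Your splitting near each pole is an adequate justification of that bound. The bound is in fact uniform in all $E\in\R$, since $I_k(E,\varepsilon)=\int f(\lambda)\,h_{k,\varepsilon}(\lambda-E)\,d\lambda$ for a fixed periodic kernel $h_{k,\varepsilon}$, so nothing depends on where the poles sit relative to $\operatorname{supp}f$.

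The density argument buys brevity. Your direct argument buys generality: it uses only that $g\in L^1$ has compact support, and would run unchanged with $g\,dE$ replaced by any finite, compactly supported $\mu_{\psi,\psi}$. It therefore needs no approximation step and no support condition bounded away from the origin.
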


\begin{proof}
For such $\psi$ the density \eqref{eq:density} lies in $L^1$ with
$\operatorname{supp}g\subset\{|\xi|^2:\xi\in\mathcal A\}=:[m_0,M_0]\subset(0,\infty)$,
so the constant $K$ of \eqref{eq:tailbound} is the same for all of them. Both
sides of \eqref{eq:recovery} are quadratic forms in $\psi$ continuous under
$L^2$-convergence of $\widehat\psi$ inside $\mathcal A$: the left side is
$\ip{f(A)\psi}{\psi}$ with $|\ip{f(A)\psi}{\psi}|\le\|f\|_\infty\|\psi\|^2$; the
right side depends on $\psi$ only through $g$ and is a bounded linear functional of
$g\in L^1$ (the tail by \eqref{eq:tailbound}, each head term by the uniform bound
$|J_k(f)|\le\pi^{-1}2^{-k}C_k\|g\|_{L^1}$ from Step~3), while
$\widehat\psi\mapsto g$ is continuous from $L^2(\mathcal A)$ to $L^1$ because
$\|g_1-g_2\|_{L^1}\le\|\widehat\psi_1-\widehat\psi_2\|_{L^2}\bigl(\|\widehat\psi_1\|_{L^2}+\|\widehat\psi_2\|_{L^2}\bigr)$.
Since $C^1_c(\mathcal A^{\circ})$ is dense in
$\{u\in L^2:\operatorname{supp}u\subset\mathcal A\}$ and
Proposition~\ref{prop:laprecovery} holds on the dense set, it holds throughout.
\end{proof}

For $A=-\Delta+V$ the density is no longer explicit, but the recovery persists
once the resolvent boundary values are supplied by the limiting absorption
principle. We isolate that principle as the imported ingredient; the dyadic
mechanism is unchanged.

\begin{proposition}[Schr\"odinger operators]\label{prop:schrod}
Let $n\ge3$ and $V:\R^n\to\R$ be measurable with
$|V(x)|\le C\langle x\rangle^{-1-\delta}$ for some $\delta>0$, where
$\langle x\rangle=(1+|x|^2)^{1/2}$, and let
$A=-\Delta+V$, self-adjoint on $H^2(\R^n)$. Then $\sigma_{\mathrm{ess}}(A)=[0,\infty)$,
$A$ has no eigenvalue in $(0,\infty)$, and the limiting absorption principle holds
\cite{Agmon,ReedSimonIV}: for $\sigma>\tfrac12$ and every compact
$I\Subset(0,\infty)$ the boundary values
$R_A(\lambda+i0)=\lim_{\varepsilon\to0^+}R_A(\lambda+i\varepsilon)$ exist in
$\mathcal B(L^2_\sigma,L^2_{-\sigma})$, the space of bounded linear operators from
$L^2_\sigma$ to $L^2_{-\sigma}$, locally uniformly on $I$; here
$L^2_{\pm\sigma}=L^2(\R^n,\langle x\rangle^{\pm2\sigma}\,dx)$. Let
$\psi\in L^2_\sigma$ with $\psi=E_A(I)\psi$ for some compact $I\Subset(0,\infty)$.
Then $\mu_{\psi,\psi}=\rho_\psi\,d\lambda$ on $(0,\infty)$, with
$\rho_\psi=\tfrac1\pi\operatorname{Im}\ip{R_A(\cdot+i0)\psi}{\psi}\in C_c(I)$, and
the recovery \eqref{eq:recovery} holds for every real-valued $f\in C^1_c((0,\infty))$, with $g$
replaced by $\rho_\psi$ in \eqref{eq:Tk}.
\end{proposition}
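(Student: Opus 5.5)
The proof separates into the imported spectral facts and a rerun of the proof of Proposition~\ref{prop:laprecovery} with $g$ replaced by $\rho_\psi$. For the first part we cite the standard results. Since $V$ is bounded and decays, it is $-\Delta$-compact, so Weyl's theorem gives $\sigma_{\mathrm{ess}}(A)=[0,\infty)$. Kato's theorem (for $|V|\le C\langle x\rangle^{-1-\delta}$) excludes embedded positive eigenvalues. The limiting absorption principle in $\mathcal B(L^2_\sigma,L^2_{-\sigma})$, locally uniform on compact $I\Subset(0,\infty)$, is the Agmon theory cited from \cite{Agmon,ReedSimonIV}; it also gives continuity of $\lambda\mapsto R_A(\lambda+i0)$ in that operator norm.

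Next we identify the density. For $\psi\in L^2_\sigma$ the function $F_\varepsilon(\lambda)=\ip{R_A(\lambda+i\varepsilon)\psi}{\psi}$ is a pairing of $L^2_{-\sigma}$ with $L^2_\sigma$. It therefore converges, uniformly on $I$, to the continuous function $F_0(\lambda)$. By Step~1 of Proposition~\ref{prop:laprecovery} in its general form, Theorem~\ref{thm:stone} and \eqref{eq:stoneim}, we have $\tfrac1\pi\operatorname{Im}F_\varepsilon=P_\varepsilon*\mu_{\psi,\psi}$. On $I$ these functions converge uniformly to $\rho_\psi$, so $\mu_{\psi,\psi}=\rho_\psi\,d\lambda$ on $I^\circ$, and $\rho_\psi\ge0$ is continuous there. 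Since $\psi=E_A(I)\psi$, the measure $\mu_{\psi,\psi}$ is supported in $I$. The only concern is mass at the endpoints of $I$. To handle it, I would replace $I$ by a slightly larger compact $I'\Subset(0,\infty)$ on whose interior LAP still holds, so that $\mu_{\psi,\psi}$ has a continuous density there and hence no atoms. Then $\rho_\psi\in C(I')$ vanishes off $I$. Calling this ``$C_c(I)$'' is slightly loose at the boundary, because a continuous function vanishing off $I$ forces $\rho_\psi=0$ at $\partial I$. I expect this bookkeeping point to be delicate but harmless.

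For the recovery, I would note that the proof of Proposition~\ref{prop:laprecovery} used only these properties of $g$:
\begin{enumerate}
\item $g\in L^1$ with compact support in $(0,\infty)$, which fixes $R$ and $K$ in Step~2;
\item $G(\lambda+i\varepsilon)=\int g(E)(E-\lambda-i\varepsilon)^{-1}dE$ is the matrix coefficient, so that \eqref{eq:shifted} paired with $\psi$ yields $G=T_0-\sum T_k$ with $T_k$ as in \eqref{eq:Tk};
\item the Stone limit \eqref{eq:stonelimit}.
\end{enumerate}
The Sokhotski--Plemelj argument of Step~3 puts the regularity on $f\in C^1_c$ rather than on $g$, and then uses dominated convergence with majorant $C_k|g|$. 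So $g\in C^1$ is never actually needed; $L^1$ suffices, as Corollary~\ref{cor:density-ext} already exploited. With $g=\rho_\psi$, item~(2) follows from the spectral theorem: $\ip{h(A)\psi}{\psi}=\int h\,\rho_\psi\,dE$ for bounded Borel $h$, since $\mu_{\psi,\psi}=\rho_\psi\,dE$. This applies to every spectral multiplier $(1\mp e^{-s/2^k})^{-1}$ appearing in \eqref{eq:shifted}. Item~(3) is Step~1 with $\rho_\psi$ continuous and compactly supported, for which $P_\varepsilon*\rho_\psi\to\rho_\psi$ uniformly. Steps~2 through~4 then apply verbatim.

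The main obstacle is the support claim in the second step: $\mu_{\psi,\psi}$ must be absolutely continuous with no singular part, in particular no singular mass at $\partial I$ and none from negative eigenvalues. Negative eigenvalues are excluded because $E_A(I)$ kills them. Singular continuous spectrum inside $(0,\infty)$ is excluded by the uniform LAP bound, which makes $\sup_{\varepsilon}\operatorname{Im}F_\varepsilon$ bounded on compacts. Enlarging $I$ to $I'$, as above, handles the endpoints.
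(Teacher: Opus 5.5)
Your proposal is correct and follows the paper's proof. Like the paper, you import Weyl, Kato and the Agmon limiting absorption principle, identify $\rho_\psi$ as the boundary value of $\tfrac1\pi\operatorname{Im}\ip{R_A(\cdot+i\varepsilon)\psi}{\psi}$, and rerun Steps~1--4 of Proposition~\ref{prop:laprecovery}, observing that they use only $\rho_\psi\in L^1$ with compact support in $(0,\infty)$ together with $f\in C^1_c$. Your version is slightly more careful than the paper's on two points: you attribute absolute continuity to the uniform LAP bound rather than to the absence of embedded eigenvalues, which alone would not exclude singular continuous spectrum, and you exclude atoms at $\partial I$ by running the LAP on a slightly larger compact $I'\supset I$.
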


\begin{proof}
The shift identity \eqref{eq:shifted} holds for any self-adjoint operator, so
pairing it against $\psi$ produces \eqref{eq:Tk} with $d\mu_{\psi,\psi}=\rho_\psi\,d\lambda$
in place of $g\,dE$. By Kato's theorem the decay of $V$ excludes eigenvalues in
$(0,\infty)$ \cite{ReedSimonIV}, so there $\mu_{\psi,\psi}$ is purely absolutely
continuous; the limiting absorption principle then yields
$\rho_\psi=\tfrac1\pi\operatorname{Im}\ip{R_A(\cdot+i0)\psi}{\psi}\in C(I)$, and
$\psi=E_A(I)\psi$ forces $\operatorname{supp}\rho_\psi\subset I$ with
$\|\rho_\psi\|_{L^1}=\|\psi\|^2$. Step~1 of Proposition~\ref{prop:laprecovery} is
exactly this boundary convergence, tested against $f$; Steps~2--4 apply verbatim
with $\rho_\psi\in L^1$ of compact support in $(0,\infty)$ in place of $g$, since
they use only $\|\rho_\psi\|_{L^1}<\infty$, the compactness of
$\operatorname{supp}\rho_\psi$, and $f\in C^1_c$.
\end{proof}

\begin{remark}
Proposition~\ref{prop:schrod} is conditional on the cited limiting absorption
principle, which is the sole analytic input beyond the dyadic identity. That
principle already supplies the density
$\rho_\psi=\tfrac1\pi\operatorname{Im}\ip{R_A(\cdot+i0)\psi}{\psi}$, so the
content here is not that the dyadic data produce something otherwise
unavailable, but that the reconstruction is insensitive to how the density is
obtained: Steps~2--4 of Proposition~\ref{prop:laprecovery} use only
$\|\rho_\psi\|_{L^1}<\infty$, compact support in $(0,\infty)$, and $f\in C^1_c$,
none of which presupposes a closed form for $\rho_\psi$. The hypotheses are the standard short-range ones;
weakening them (slower decay, $n=1,2$ thresholds, or embedded eigenvalues) is
governed entirely by the corresponding refinements of the absorption principle,
not by the dyadic step.
\end{remark}

\subsection{Traces and the density of states}

The recovery above reconstructs the spectral measure $\mu_{\psi,\psi}$ of a single
vector. Summing over an orthonormal basis replaces it by
$\operatorname{Tr}E_A(\,\cdot\,)$, the spectral measure of the whole
operator---the density of states---and the dyadic identity descends to a trace
formula for it.

\begin{proposition}\label{prop:trace}
Let $A$ be self-adjoint with discrete spectrum $\{a_n\}$ (listed with
multiplicity) satisfying $\sum_n(1+|a_n|)^{-1}<\infty$. Then for $\lambda>0$ the
resolvent $R_A(i\lambda)$ is trace class and, with $p_n=\lambda+ia_n$,
\begin{equation}\label{eq:trace}
\operatorname{Tr}R_A(i\lambda)
=\sum_n\frac{1}{a_n-i\lambda}
=i\sum_n\Bigl[\frac{1}{1-e^{-p_n}}
-\sum_{k\ge1}\frac{2^{-k}}{1+e^{-p_n/2^{k}}}\Bigr],
\end{equation}
each bracket being the scalar identity \eqref{eq:recip}, equal to $1/p_n$. The
grouping is essential: the partial traces
$\operatorname{Tr}(1\mp e^{-\lambda T}U_T)^{-1}$ diverge separately.
\end{proposition}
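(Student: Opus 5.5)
Diagonalize $A$ in an orthonormal eigenbasis $\{e_n\}$, $Ae_n=a_ne_n$; discreteness of the spectrum supplies such a basis. Every bounded Borel function of $A$ then acts diagonally. In particular $R_A(i\lambda)$ has eigenvalues $(a_n-i\lambda)^{-1}$, whose moduli satisfy
\[
|a_n-i\lambda|^{-1}\le C_\lambda(1+|a_n|)^{-1},\qquad C_\lambda=\max\{1,\lambda^{-1}\}\cdot 2 .
\]
A normal operator is trace class exactly when the sum of the moduli of its eigenvalues is finite, so the hypothesis $\sum_n(1+|a_n|)^{-1}<\infty$ gives trace class. It also gives $\operatorname{Tr}R_A(i\lambda)=\sum_n(a_n-i\lambda)^{-1}$, computed in the basis $\{e_n\}$ and independent of basis. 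This is the first equality.

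For the second equality, write $p_n=\lambda+ia_n$, so that
\[
a_n-i\lambda=-i(\lambda+ia_n)=-ip_n,\qquad (a_n-i\lambda)^{-1}=i/p_n .
\]
Apply the scalar identity \eqref{eq:recip} with complex $p=p_n$, $\operatorname{Re}p>0$. Proposition~\ref{prop:scalardyadic} states it only for real $p$, so I would extend it by one of two routes. The first is analytic continuation: both sides are analytic in the half-plane $\operatorname{Re}p>0$, the series converges locally uniformly there (see the next step), and the two sides agree on $(0,\infty)$. The second, more cleanly, is to read it off directly as the $e_n$-diagonal entry of \eqref{eq:dyadic}, i.e.\ Proposition~\ref{prop:dyadic} applied on the one-dimensional space with $A=a_n$. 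Either way each bracket equals $1/p_n$, and summing over $n$ gives \eqref{eq:trace} with the outer sum absolutely convergent.

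For local uniform convergence on compacta of $\{\operatorname{Re}p>0\}$ I would use that $e^{-p/2^k}\to1$ uniformly, so $|1+e^{-p/2^k}|\ge1$ for $k$ large. This is the same argument-bound as Step~2 of Proposition~\ref{prop:laprecovery}. The remaining finitely many terms are nonzero because $|e^{-p/2^k}|<1$.

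The main obstacle is the closing claim that the partial traces diverge separately, which makes the grouping essential. The operator $(1-e^{-\lambda T}U_T)^{-1}$ has eigenvalues $(1-e^{-Tp_n})^{-1}$. Since $|a_n|\to\infty$, these do not tend to $0$. For $|e^{-Tp_n}|=e^{-\lambda T}<1$ they satisfy
\[
|(1-e^{-Tp_n})^{-1}|\ge(1+e^{-\lambda T})^{-1}\ge\tfrac12 ,
\]
and the same bound holds for the $+$ sign. The eigenvalues are bounded below in modulus, so on an infinite-dimensional space the operator is not compact, let alone trace class, and its trace diverges. The case where $\Hil$ is finite-dimensional is trivial and excluded implicitly.

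What needs care is to say precisely in what sense the grouping is essential. Summing over $n$ first for each fixed $k$ fails. Keeping the bracket intact per $n$ succeeds, because each bracket collapses to $1/p_n=O((1+|a_n|)^{-1})$, which is summable. The order of summation in \eqref{eq:trace} (over $k$ inside, $n$ outside) is therefore forced, and I would state the result with that order.
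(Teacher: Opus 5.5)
Your proposal is correct and follows essentially the paper's proof. Trace class comes from $|a_n-i\lambda|^{-1}\le C_\lambda(1+|a_n|)^{-1}$. Each bracket is identified with $1/p_n$ by evaluating Proposition~\ref{prop:dyadic} on an eigenvector; this is your second route, and it is in effect how the paper justifies using \eqref{eq:recip} at the complex points $p_n$. The separate partial traces diverge because of the uniform lower bound $|(1\mp e^{-Tp_n})^{-1}|\ge(1+e^{-\lambda T})^{-1}$. Your caveat that this divergence claim needs $\Hil$ to be infinite-dimensional is a fair point the paper leaves implicit.
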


\begin{proof}
Since $\sqrt{a_n^2+\lambda^2}\ge c_\lambda(1+|a_n|)$ with
$c_\lambda=\min(1,\lambda)/\sqrt2$, one has
$\sum_n|a_n-i\lambda|^{-1}\le c_\lambda^{-1}\sum_n(1+|a_n|)^{-1}<\infty$, so
$R_A(i\lambda)$ is trace class. Evaluating the trace in an eigenbasis
$\{\psi_n\}$ and using $U_t\psi_n=e^{-ita_n}\psi_n$ in
Proposition~\ref{prop:dyadic},
\[
\ip{R_A(i\lambda)\psi_n}{\psi_n}
=i\Bigl[\frac{1}{1-e^{-p_n}}-\sum_{k\ge1}\frac{2^{-k}}{1+e^{-p_n/2^{k}}}\Bigr]
=\frac{i}{p_n}=\frac{1}{a_n-i\lambda},
\]
the middle equality being \eqref{eq:recip}. Summing over $n$ gives
\eqref{eq:trace}. Finally $|(1-e^{-p_n})^{-1}|$ lies in
$[(1+e^{-\lambda})^{-1},(1-e^{-\lambda})^{-1}]$, bounded away from $0$, so
$\operatorname{Tr}(1-e^{-\lambda}U_1)^{-1}=\sum_n(1-e^{-p_n})^{-1}$ diverges;
likewise the remaining partial traces.
\end{proof}

\begin{remark}\label{rem:traceformula}
Reordering \eqref{eq:trace} by powers of the propagator gives the formal trace
formula
\[
\operatorname{Tr}R_A(i\lambda)
=i\sum_{j\ge1}e^{-j\lambda}\Theta(j)
-i\lim_{\ell\to\infty}\sum_{k=1}^{\ell}\sum_{j\ge1}(-1)^j2^{-k}e^{-j\lambda/2^{k}}\,
\Theta(j2^{-k}),\qquad \Theta(t)=\operatorname{Tr}e^{-itA},
\]
which expresses the spectral side through the propagator trace $\Theta$ sampled
on the dyadic time set $\{j2^{-k}\}$. The $j=0$ terms, each
$\operatorname{Tr}I=\infty$, cancel through $\sum_{k\ge1}2^{-k}=1$; as $\Theta$ is
distributional at real times, the formula is rigorous in the heat-regularized
reading $t\mapsto t-i0$. Equivalently, the traced Stone formula \eqref{eq:stoneim}
recovers the density of states $\sum_n\delta(\,\cdot\,-a_n)$ from the same dyadic
data---the global counterpart of Proposition~\ref{prop:laprecovery}.
\end{remark}

\begin{example}\label{ex:circle}
For $A=-d^2/dx^2$ on $L^2(\R/2\pi\Z)$ the eigenvalues are $a_n=n^2$, $n\in\Z$,
and $\sum_n(1+n^2)^{-1}<\infty$, so Proposition~\ref{prop:trace} applies. The left
side of \eqref{eq:trace} has the closed form
\[
\operatorname{Tr}R_A(i\lambda)=\sum_{n\in\Z}\frac{1}{n^2-i\lambda}
=\frac{\pi}{\sqrt{-i\lambda}}\,\coth\!\bigl(\pi\sqrt{-i\lambda}\bigr),
\]
while the propagator trace $\Theta(t)=\sum_{n\in\Z}e^{-itn^2}$ is a Jacobi theta
function---the same object produced by the Zak transform of a Gaussian in
Example~\ref{ex:gaussian}. Thus on the circle the trace relates a cotangent-type
sum to dyadic samples of a theta function; we verified \eqref{eq:trace} against
the closed form numerically.
\end{example}

\subsection{Spectral zeta functions}

The heat trace $H(t):=\operatorname{Tr}e^{-tA}=\sum_n e^{-ta_n}$ enters the spectral
zeta function $\zeta_A(\sigma)=\sum_n a_n^{-\sigma}$ through the Mellin transform
$\Gamma(\sigma)\zeta_A(\sigma)=\int_0^\infty t^{\sigma-1}H(t)\,dt$. The scalar
identity \eqref{eq:genpolylog} is an exact dyadic discretization of the Gamma
integral $\Gamma(\sigma)p^{-\sigma}=\int_0^\infty t^{\sigma-1}e^{-pt}\,dt$, so
summing it over the spectrum represents this Mellin transform by dyadic samples of
the heat trace. (Here $t$ is real: $H$ is the heat trace, in contrast with the
propagator trace $\Theta(t)=\operatorname{Tr}e^{-itA}$ of
Remark~\ref{rem:traceformula}.)

\begin{proposition}\label{prop:zeta}
Let $A$ be self-adjoint with discrete spectrum $a_n>0$, $a_n\to\infty$, and finite
heat trace $H(t)<\infty$ for all $t>0$, and let $\sigma_0$ be the abscissa of
convergence of $\zeta_A$. Then for $\Re\sigma>\sigma_0$,
\begin{equation}\label{eq:zeta}
\Gamma(\sigma)\,\zeta_A(\sigma)
=\sum_{j\ge1}j^{\sigma-1}H(j)
-\sum_{k\ge1}2^{-k\sigma}\sum_{j\ge1}(-1)^{j}\,j^{\sigma-1}\,H\!\bigl(j2^{-k}\bigr),
\end{equation}
and the first sum is an entire function of $\sigma$.
\end{proposition}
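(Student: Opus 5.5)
The plan is to sum the scalar identity \eqref{eq:genpolylog} over the spectrum. Put $s=1-\sigma$ in \eqref{eq:genpolylog}, first for real $\sigma>0$ so that $s<1$. This gives, for every $p>0$,
\[
\Gamma(\sigma)\,p^{-\sigma}=\Lis_{1-\sigma}\bigl(e^{-p}\bigr)-\sum_{k\ge1}2^{-k\sigma}\,\Lis_{1-\sigma}\bigl(-e^{-2^{-k}p}\bigr).
\]
Both sides are analytic in $\sigma$ on $\Re\sigma>0$ for fixed $p>0$: the $k$-series converges locally uniformly there, by the boundedness estimate below. So the identity extends to complex $\sigma$ with $\Re\sigma>0$.

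This half-plane contains $\Re\sigma>\sigma_0$. Indeed, since $a_n\to\infty$ the spectrum is infinite, so $\zeta_A(0)=\sum_n1$ diverges and $\sigma_0\ge0$.

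Now take $p=a_n$ and sum over $n$. The left side becomes $\Gamma(\sigma)\zeta_A(\sigma)$. On the right, I expand $\Lis_{1-\sigma}(z)=\sum_{j\ge1}j^{\sigma-1}z^j$ and exchange $\sum_n$ with $\sum_j$. This turns $\sum_n e^{-ja_n}$ into $H(j)$ and $\sum_n e^{-ja_n2^{-k}}$ into $H(j2^{-k})$, which is exactly \eqref{eq:zeta}.

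For fixed $k$ (and for the leading term) the $j$–$n$ exchange is justified by Tonelli. Since $a_n\ge a_1>0$, we have $H(t)\le H(t_0)e^{-(t-t_0)a_1}$ for $t\ge t_0$. Hence $\sum_j j^{\Re\sigma-1}H(j2^{-k})<\infty$.

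The same bound shows the first sum in \eqref{eq:zeta} is entire. We have $|j^{\sigma-1}H(j)|\le H(1)\,j^{|\sigma|+1}e^{-(j-1)a_1}$, which is summable uniformly for $\sigma$ in compact sets. A locally uniform limit of entire functions is entire.

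The main obstacle is exchanging $\sum_n$ with $\sum_k$. The fully expanded triple sum is \emph{not} absolutely convergent: the alternating cancellation in $j$ is essential, as in Remark~\ref{rem:absconv}. So I would keep each inner $j$-sum resummed as $\Lis_{1-\sigma}(-e^{-x})$ and apply Fubini in $(n,k)$ only. For this I need two facts about $x\mapsto\Lis_{1-\sigma}(-e^{-x})$, uniformly for $\sigma$ in compact subsets of $\Re\sigma>0$:
\begin{itemize}
\item it is bounded on $[0,\infty)$, since it is an alternating Dirichlet-type series that extends continuously to $x=0$, where it equals $-\eta(1-\sigma)$;
\item it is $O(e^{-x})$ as $x\to\infty$.
\end{itemize}
With these, split the $k$-sum for each $n$ at $2^k\approx a_n$. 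The terms with $2^k>a_n$ contribute at most $C\sum_{2^k>a_n}2^{-k\Re\sigma}\lesssim a_n^{-\Re\sigma}$. The terms with $2^k\le a_n$ contribute at most $C\sum_{2^k\le a_n}e^{-a_n2^{-k}}$, a sum dominated by its last few terms, which is again $\lesssim a_n^{-\Re\sigma}$ (crudely, $\lesssim e^{-1}\cdot$ a geometric factor, absorbed into $a_n^{-\Re\sigma}$ for large $a_n$).

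Hence $\sum_n\sum_k 2^{-k\Re\sigma}\bigl|\Lis_{1-\sigma}(-e^{-a_n2^{-k}})\bigr|\lesssim\sum_n a_n^{-\Re\sigma}<\infty$ for $\Re\sigma>\sigma_0$. This legitimizes the exchange, after which the inner $j$-expansion is applied for each fixed $k$ as above. I expect the uniform boundedness of $\Lis_{1-\sigma}(-e^{-x})$ near $x=0$ to need the most care. I would get it by writing $\Lis_{1-\sigma}(-e^{-x})=\Gamma(1-\sigma)^{-1}\int_0^\infty t^{-\sigma}(e^{t+x}+1)^{-1}\,dt$ for $0<\Re\sigma<1$. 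For larger $\Re\sigma$ I would instead use Abel summation, since $j^{\sigma-1}$ is then eventually monotone in modulus up to controllable error.
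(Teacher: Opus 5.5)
You follow the paper's route. You specialize \eqref{eq:genpolylog} to $s=1-\sigma$, $p=a_n$, sum over $n$, and interchange $\sum_n$ with $\sum_k$ while each inner $j$-sum stays resummed as $\Lis_{1-\sigma}(-e^{-a_n2^{-k}})$. Only then do you expand and swap $n$ with $j$ at fixed $k$. The entire part comes from $H(j)\le H(1)e^{-(j-1)a_1}$, exactly as in the paper. Your extension from real $s<1$ to complex $\sigma$ by analytic continuation is correct, and it fills a step the paper leaves implicit.

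\textbf{The domination estimate is wrong as written.} This is the step that licenses the $(n,k)$ interchange. For $2^k\le a_n$ you bound the $k$-th term by $Ce^{-a_n2^{-k}}$, discarding the weight $2^{-k\Re\sigma}$. You then assert $\sum_{2^k\le a_n}e^{-a_n2^{-k}}\lesssim a_n^{-\Re\sigma}$, and that is false. Once $a_n\ge2$, some $k$ has $1\le a_n2^{-k}<2$, so the sum is at least $e^{-2}$. It is of order one, and an $O(1)$ quantity cannot be ``absorbed'' into $a_n^{-\Re\sigma}\to0$. Summed over $n$, your bound diverges and justifies nothing.

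\textbf{The fix is to keep the weight.} Write $x_k=a_n2^{-k}$. The head term is then at most $C\,a_n^{-\Re\sigma}x_k^{\Re\sigma}e^{-x_k}$. The $x_k\ge1$ form a geometric progression of ratio $2$ starting in $[1,2)$, so $\sum_{2^k\le a_n}x_k^{\Re\sigma}e^{-x_k}\le C_\sigma$. Hence the head is indeed $\lesssim a_n^{-\Re\sigma}$, and the whole $(n,k)$ double sum is $\lesssim\sum_n a_n^{-\Re\sigma}=\zeta_A(\Re\sigma)<\infty$.

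Repaired this way, your argument is a legitimate variant of the paper's. The paper sums over $n$ first: it uses $|\Lis_{1-\sigma}(-y)|\le C_\sigma y$ on $[0,1]$ to reach $C_\sigma\sum_k2^{-k\Re\sigma}H(2^{-k})$. It then compares this, via monotonicity of $H$, with $\int_0^1t^{\Re\sigma-1}H(t)\,dt$. You sum over $k$ first and compare directly with the Dirichlet series, which bypasses the Mellin integral.

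\textbf{A secondary point: your bound on $\Lis_{1-\sigma}(-e^{-x})$ near $x=0$ for $\Re\sigma\ge1$ is not convincing as stated.} There the differences $\Delta j^{\sigma-1}\sim(\sigma-1)j^{\sigma-2}$ are in general not summable. So a single summation by parts gives no bound uniform as $x\to0^+$; you would have to iterate the differencing, each pass costing a harmless factor $(1+e^{-x})^{-1}$.

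A cleaner route goes through the integral representation. For $\Re s>0$ one has $\Lis_s(-e^{-x})=-\Gamma(s)^{-1}\int_0^\infty t^{s-1}(e^{t+x}+1)^{-1}\,dt$; note the minus sign, which your formula omits. One also has $\partial_x\Lis_s(-e^{-x})=-\Lis_{s-1}(-e^{-x})$. Choose $m$ with $\Re(1-\sigma)+m>0$, apply the representation at $s=1-\sigma+m$, and differentiate $m$ times under the integral. Using $|\partial_u^m(e^u+1)^{-1}|\le C_me^{-u}$ for $u\ge0$, this gives $|\Lis_{1-\sigma}(-e^{-x})|\le C_\sigma e^{-x}$ on $[0,\infty)$, locally uniformly in $\sigma$. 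That single bound contains both of your bulleted facts, and it is exactly the paper's $|\Lis_{1-\sigma}(-y)|\le C_\sigma y$.
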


\begin{proof}
Take \eqref{eq:genpolylog} with $s=1-\sigma$, so that $\Gamma(1-s)=\Gamma(\sigma)$
and $p^{s-1}=p^{-\sigma}$, and evaluate at $p=a_n$:
\[
\Gamma(\sigma)\,a_n^{-\sigma}
=\Lis_{1-\sigma}\!\bigl(e^{-a_n}\bigr)
-\sum_{k\ge1}2^{-k\sigma}\Lis_{1-\sigma}\!\bigl(-e^{-a_n/2^{k}}\bigr).
\]
Sum over $n$. With $\Lis_{1-\sigma}(z)=\sum_{j\ge1}j^{\sigma-1}z^{j}$ and
$\sum_n e^{-ja_n}=H(j)$, the first term yields $\sum_{j\ge1}j^{\sigma-1}H(j)$;
since $e^{-ja_n}\le e^{-(j-1)a_1}e^{-a_n}$ gives $H(j)\le H(1)e^{-(j-1)a_1}$, this
series converges locally uniformly on $\C$ and is entire. For the second term,
$\Lis_{1-\sigma}(-x)=-x+O(x^2)$ near $0$ and is bounded on $[0,1]$, so
$|\Lis_{1-\sigma}(-x)|\le C_\sigma x$ there; hence
\[
\sum_{n}\sum_{k\ge1}2^{-k\Re\sigma}
\bigl|\Lis_{1-\sigma}\!\bigl(-e^{-a_n/2^{k}}\bigr)\bigr|
\le C_\sigma\sum_{k\ge1}2^{-k\Re\sigma}H\!\bigl(2^{-k}\bigr)<\infty
\qquad(\Re\sigma>\sigma_0),
\]
the last sum being comparable to the convergent Mellin integral
$\int_0^{1}t^{\sigma-1}H(t)\,dt$. This absolute convergence licenses interchanging
$\sum_n$ with $\sum_k$; expanding each alternating polylogarithm then produces the
inner sums $\sum_{j\ge1}(-1)^{j}j^{\sigma-1}H(j2^{-k})$, which proves
\eqref{eq:zeta}.
\end{proof}

\begin{remark}\label{rem:zetacont}
Identity \eqref{eq:zeta} splits $\zeta_A$ into an entire part and a meromorphic
remainder. The leading sum, built from the heat trace at integer times $t\ge1$
where $H$ decays exponentially, is entire; the dyadic correction probes $H$ at the
short times $j2^{-k}$ and carries the poles. If $H(t)\sim C\,t^{-\sigma_0}$ as
$t\to0^+$, the inner sums grow like $2^{k\sigma_0}$ and the outer geometric series
$\sum_k 2^{k(\sigma_0-\sigma)}=(1-2^{\sigma_0-\sigma})^{-1}$ converges exactly for
$\Re\sigma>\sigma_0$, its singularity at $\sigma=\sigma_0$ reproducing the leading
heat coefficient $\operatorname{Res}_{\sigma=\sigma_0}\zeta_A=C/\Gamma(\sigma_0)$.
Since $1/\Gamma(\sigma)$ vanishes at $\sigma=0$, the special values $\zeta_A(0)$
and $\zeta_A'(0)=-\log\det A$---the spectral determinant---are read from the
bracket near $\sigma=0$.
\end{remark}

\begin{example}\label{ex:zetacircle}
For $A=-d^2/dx^2$ on $L^2(\R/2\pi\Z)$ the positive eigenvalues are $n^2$,
$n\in\Z\setminus\{0\}$ (multiplicity two), so $\zeta_A(\sigma)=2\zeta(2\sigma)$
with $\zeta$ the Riemann zeta function, and
$H(t)=\sum_{n\neq0}e^{-tn^2}=\theta_3\!\bigl(0;e^{-t}\bigr)-1$, the same Jacobi
theta as in Example~\ref{ex:gaussian}. Here $\sigma_0=\tfrac12$, and the pole of
$\zeta_A$ at $\sigma=\tfrac12$ has residue $1$, matching
$C/\Gamma(\tfrac12)=\sqrt\pi/\sqrt\pi$ from $H(t)\sim\sqrt{\pi/t}$. Thus
\eqref{eq:zeta} is a dyadic-sampling form of the classical theta representation of
the Riemann zeta function; we verified \eqref{eq:zeta} itself, and the underlying
identity \eqref{eq:genpolylog}, numerically to $20$ and $30$ digits respectively.
\end{example}

\begin{example}[A functional determinant: $\det(-\Delta+m^{2})$ on the circle]
\label{ex:zetadet}
On $L^{2}(\R/L\Z)$ with $m>0$, the operator $A=-d^{2}/dx^{2}+m^{2}$ has eigenvalues
$a_n=(2\pi n/L)^{2}+m^{2}$, $n\in\Z$, all positive, and heat trace
$H(t)=e^{-tm^{2}}\theta_3\!\bigl(0;e^{-t(2\pi/L)^{2}}\bigr)$. Its short-time
expansion $H(t)=\tfrac{L}{2\sqrt{\pi}}\,t^{-1/2}e^{-tm^{2}}+(\text{exponentially
small})$ has no constant term, so $\zeta_A(0)=0$ and the zeta-regularized
determinant is
\[
\log\det A=-\zeta_A'(0)=-\bigl[\Gamma(\sigma)\zeta_A(\sigma)\bigr]_{\sigma=0},
\]
the bracket being regular at $\sigma=0$ by Remark~\ref{rem:zetacont}.

We read this value off \eqref{eq:zeta}. The entire part contributes
$E(0)=\sum_{j\ge1}H(j)/j$, an exponentially convergent sum of integer-time heat
samples. For the dyadic correction, subtract the leading term
$H_0(t)=\tfrac{L}{2\sqrt{\pi}}t^{-1/2}e^{-tm^{2}}$: the remainder $H-H_0$ is
exponentially small as $t\to0^{+}$ and yields a convergent
$D_{\mathrm{wind}}(0)=\sum_{k\ge1}\sum_{j\ge1}(-1)^{j}(H-H_0)(j2^{-k})/j$, while the
$H_0$ piece resums geometrically to
\[
D_0(0)=\frac{L}{2\sqrt{\pi}}\left[\frac{\Lis_{3/2}(-1)}{2^{-1/2}-1}
+\sum_{k\ge1}2^{k/2}\bigl(\Lis_{3/2}(-e^{-m^{2}2^{-k}})-\Lis_{3/2}(-1)\bigr)\right],
\]
whose first term is the geometric series $\sum_{k\ge1}2^{-k(\sigma-1/2)}$ evaluated
at $\sigma=0$---the very series whose pole at $\sigma=\tfrac12$ is isolated in
Remark~\ref{rem:zetacont}. Then $\log\det A=D_0(0)+D_{\mathrm{wind}}(0)-E(0)$.

This reproduces the classical closed form $\det A=4\sinh^{2}(mL/2)$. Across a range
of $(L,m)$ the reconstruction agrees to the precision of the truncated sums (about
$13$ digits); for instance $L=2\pi$, $m=1$ gives $\log\det A=6.2794469300261$,
matching $\log\!\bigl[4\sinh^{2}\pi\bigr]$. Figure~\ref{fig:zetadet} shows the
entire part converging exponentially in the number of integer heat samples---a
handful suffice, at a rate set by $m^{2}$---while remaining pole-free, the lone
pole at $\sigma_0=\tfrac12$ residing entirely in the dyadic correction.
\end{example}

\begin{figure}[ht]
\centering
\includegraphics[width=\textwidth]{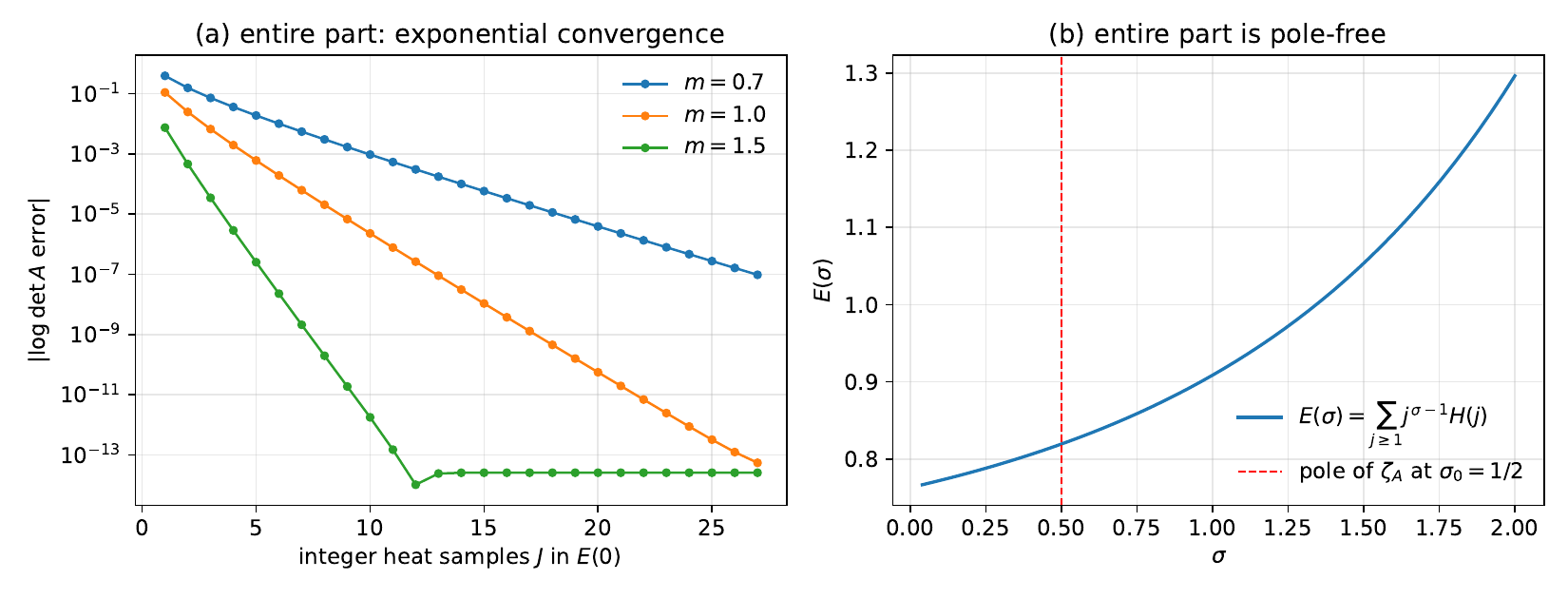}
\caption{Zeta-regularized determinant of $-\Delta+m^{2}$ on the circle via the
dyadic heat-trace split \eqref{eq:zeta}. (a)~Error of the reconstructed
$\log\det A$ against the closed form $4\sinh^{2}(mL/2)$, versus the number of
integer-time heat samples $H(1),\dots,H(J)$ retained in the entire part $E(0)$;
convergence is exponential with rate set by $m^{2}$, the floor near $10^{-14}$
being the working precision. (b)~The entire part
$E(\sigma)=\sum_{j\ge1}j^{\sigma-1}H(j)$ is smooth and pole-free; the only pole of
$\zeta_A$, at $\sigma_0=\tfrac12$, is carried by the dyadic correction.}
\label{fig:zetadet}
\end{figure}

\subsection{Identifiability from the dyadic samples}

The constructions of this paper run in a single direction: from the dyadic
propagator samples $\{U_{2^{-k}}\}_{k\ge0}$ to the resolvent through
\eqref{eq:dyadic}, and from the resolvent to the spectral measures, the density of
states, and the spectral zeta function through Theorem~\ref{thm:stone} and the
present section. Read backwards, they say that all of this spectral data is a
\emph{function of the dyadic samples alone}. We close by recording this as a
uniqueness statement, which promotes Proposition~\ref{prop:dyadic} from an identity
to an identifiability theorem.

Every summand on the right of \eqref{eq:dyadic} is a power
$U_{2^{-k}}^{\,j}=U_{j2^{-k}}$ of a single dyadic sample, so for each $\lambda>0$ the
operator $R_A(i\lambda)$ is determined by $\{U_{2^{-k}}\}_{k\ge0}$. The off-axis
resolvent is reached by modulation: since $e^{-it(A-aI)}=e^{iat}U_t$ for $a\in\R$,
the operator $A-aI$ has dyadic samples $e^{ia2^{-k}}U_{2^{-k}}$, and
\eqref{eq:dyadic} applied to it returns $R_A(a+i\lambda)=R_{A-aI}(i\lambda)$ from the
same data---the modulation already used for the energy dependence in
Proposition~\ref{prop:lattice}. Hence the dyadic samples determine $R_A(z)$ for every
$z$ with $\Im z\neq0$, and with it the spectral measures, density of states, and
zeta function of this section.

\begin{corollary}[Dyadic identifiability]\label{cor:identifiability}
Let $A$ and $B$ be self-adjoint operators on $\Hil$, with propagators
$U^A_t=e^{-itA}$ and $U^B_t=e^{-itB}$. If
\[
U^A_{2^{-k}}=U^B_{2^{-k}}\qquad\text{for every }k\ge0,
\]
then $A=B$. In particular $A$---together with all of its spectral measures, its
density of states, and its spectral zeta function---is determined by the single
sequence of dyadic propagator samples $\{U^A_{2^{-k}}\}_{k\ge0}$.
\end{corollary}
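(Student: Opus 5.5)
The plan is to pass from the dyadic samples to the resolvent by Proposition~\ref{prop:dyadic}, and then from the resolvent to the operator by standard resolvent theory.

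First, fix $\lambda>0$. The right side of \eqref{eq:dyadic} is built only from the operators $U_{2^{-k}}$, $k\ge0$, through the resolvents $(1-e^{-\lambda}U_1)^{-1}$ and $(1+e^{-\lambda/2^k}U_{2^{-k}})^{-1}$; the latter are bounded functions of the samples, since $e^{-\lambda 2^{-k}}<1$. Hence the hypothesis $U^A_{2^{-k}}=U^B_{2^{-k}}$ for every $k\ge0$ gives, term by term, equality of the partial sums of \eqref{eq:dyadic} for $A$ and for $B$. Both series converge in the strong operator topology, so the strong limits agree, and $R_A(i\lambda)=R_B(i\lambda)$ for every $\lambda>0$. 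Applying the same argument with $\lambda<0$ (complex conjugation in Proposition~\ref{prop:dyadic}) covers the lower half of the imaginary axis. One resolvent point already suffices for the next step.

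Next, I recover the operator from a single resolvent. Set $R:=R_A(i)=R_B(i)$. This operator is injective, and its range is $\mathcal D(A)$ and also $\mathcal D(B)$, since $(A-i)^{-1}$ maps $\Hil$ onto $\mathcal D(A)$. Hence $\mathcal D(A)=\operatorname{Ran}R=\mathcal D(B)$. For $\xi=R\eta$ we have $A\xi=\eta+i\xi=B\xi$, so $A=R^{-1}+iI=B$ as unbounded operators, domains included. The ``in particular'' clause then follows: the spectral measures are obtained through Theorem~\ref{thm:stone}, where the off-axis resolvents come from the modulated samples $e^{ia2^{-k}}U_{2^{-k}}$ as explained just before the statement. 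Alternatively, and more simply, it follows because all of this data is a function of $A$ alone, and $A$ is now determined.

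The only delicate point is the limit interchange in \eqref{eq:dyadic}, which the paper warns cannot in general be reordered. I will avoid expanding into the double series and instead use the first line of \eqref{eq:dyadic}, where each summand is literally a function of one sample $U_{2^{-k}}$. Equality of the two sides then holds at the level of partial sums, before any limit is taken.

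I will also remark that the dyadic samples are essential and not redundant. $U_1$ alone does not determine $A$ (aliasing: $A$ and $A+2\pi P$, with $P$ a spectral projection, share $U_1$). The full dyadic family removes exactly this ambiguity. That explains the statement's description of the result as an anti-aliasing, though the remark is not needed for the proof.
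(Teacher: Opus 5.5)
Your proposal is correct and follows the paper's proof essentially step for step: equal dyadic samples force equal right-hand sides in \eqref{eq:dyadic}, hence $R_A(i\lambda)=R_B(i\lambda)$. A single common resolvent $R$ with $\operatorname{ran}R=D(A)=D(B)$ and $R^{-1}=A-i\lambda=B-i\lambda$ then gives $A=B$, domains included. The only cosmetic difference is that you compare the first, resummed line of \eqref{eq:dyadic} term by term, whereas the paper compares the powers $U_{j2^{-k}}=(U_{2^{-k}})^{j}$; both are valid, and yours sidesteps the limit-interchange caveat slightly more cleanly.
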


\begin{proof}
For every $j\ge0$ and $k\ge0$ one has
$U^A_{j2^{-k}}=(U^A_{2^{-k}})^{j}=(U^B_{2^{-k}})^{j}=U^B_{j2^{-k}}$, so the
right-hand side of \eqref{eq:dyadic} is identical for $A$ and $B$; therefore
$R_A(i\lambda)=R_B(i\lambda)$ in the strong operator topology for every $\lambda>0$.
Fix one such $\lambda$ and set $R=(A-i\lambda)^{-1}=(B-i\lambda)^{-1}$. This bounded
operator is a bijection of $\Hil$ onto $\operatorname{ran}R=D(A)=D(B)$, and on that
common domain $R^{-1}=A-i\lambda=B-i\lambda$; hence $A=B$, domains included. The
remaining assertions restate the constructions of \S\ref{sec:spectral}.
\end{proof}

\begin{remark}[Anti-aliasing]\label{rem:aliasing}
A single sample cannot suffice. The propagator $U_1=e^{-iA}$ is unchanged under
$A\mapsto A+2\pi I$, and more generally resolves the spectrum only modulo $2\pi$, so
no one $U_T$ separates spectral values differing by a multiple of $2\pi/T$. The
dyadic ladder removes exactly this ambiguity. On the spectrum,
$U^A_{2^{-k}}=U^B_{2^{-k}}$ for all $k$ forces $2^{-k}(a-b)\in2\pi\Z$ for every
$k\ge0$, that is $a-b\in\bigcap_{k\ge0}2\pi\,2^{k}\Z=\{0\}$. Concretely, a would-be
aliasing gap $a-b=2\pi m$ with $m=2^{p}q$, $q$ odd, passes undetected through every
coarser sample but is caught at scale $k=p+1$, where
$e^{-i2^{-k}(a-b)}=e^{-i\pi q}=-1$. The octave refinement of
Remark~\ref{rem:average} is thus an exact dealiasing of the propagator: each halving
of the clock fixes one more binary digit of the spectral gap, and the whole ladder
admits only $a=b$.
\end{remark}

\section*{Acknowledgments}

It is a pleasure to thank Ovidiu Costin and Rodica Costin. This work grew directly
out of our collaboration, and it reflects throughout their insight, their
generosity with ideas, and their patient mentorship of me both as a mathematician
and as a person. I am deeply grateful to them.

\end{document}